\newtheorem{thm}{Theorem}[section]
\newtheorem{Prop}[thm]{Proposition}
\newtheorem{lem}[thm]{Lemma}
\newtheorem{defi}[thm]{Definition}
 \newtheorem{Thm}{Theorem}[section]
 \newtheorem{Rmk}[thm]{Remark}
 \newtheorem{Lem}[thm]{Lemma}
  \newtheorem{Def}[thm]{Definition}
\def\bC {\mathbb{C}}
 \def\N {\mathbb{N}}
\def\R {\mathbb{R}}
\def\K {\mathbb{K}}
\def\V {\mathbb{V}}
 \def\bP {\mathbb{P}}
\def\cA {\mathcal{A}}
\def\cB {\mathcal{B}}
\def\cL {\mathcal{L}}
\def\cW {\mathcal{W}}
\newcommand{\Winc}{\cW^0_{\text{inc}}}
\newcommand{\WBLII}{\cW_{BL,\eps^2}}
\newcommand{\WBLIII}{\cW_{BL,\eps^3}}
\newcommand{\WMF}{\cW_{MF}^1}
\newcommand{\WSH}{\cW_{II}^1}
\newcommand{\cU}{\mathcal U}
\newcommand{\cV}{\mathcal V}
\def\b {{\beta}}
\def\eps {{\varepsilon}}
\def\om {{\omega}}
\def\d {{\partial}}
\newcommand{\ba}{\begin{aligned}}
\newcommand{\ea}{\end{aligned}}
\newcommand{\be}{\begin{equation}}
\newcommand{\ee}{\end{equation}}
\newcommand{\C}{\mathbb C}
\numberwithin{equation}{section}
\begin{document}


\title{Near-critical reflection of internal waves}
\author{Roberta Bianchini 
 \footnote{Sorbonne Universit\'e, LJLL, UMR CNRS-UPMC 7598, 4 place Jussieu, 75252-Paris Cedex 05
(France) {\color{blue} {\tt bianchini@ljll.math.upmc.fr}}}~~ \& \\
Anne-Laure Dalibard
 \footnote{Sorbonne Universit\'e, LJLL, UMR CNRS-UPMC 7598, 4 place Jussieu, Boite courrier 187, 75252-Paris Cedex 05
(France) {\color{blue} {\tt dalibard@ljll.math.upmc.fr}}}~~ \& \\
Laure Saint-Raymond
 \footnote{Ecole Normale Sup\'erieure de Lyon,  UMPA,  UMR CNRS-ENSL 5669,  46, all\'ee d'Italie,  69364-Lyon Cedex 07
(France) {\color{blue} {\tt laure.saint-raymond@ens-lyon.fr}}}
}

\maketitle 
\begin{abstract} 
Internal waves describe the (linear) response of an incompressible stably stratified fluid to small perturbations.
The inclination  of their group velocity with respect to the vertical is completely determined by their frequency.
Therefore the reflection on a sloping boundary cannot follow Descartes' laws, and it is expected to be singular if the slope
has the same inclination as the group velocity.
In this paper, we prove that in this critical geometry the weakly viscous and weakly nonlinear wave equations have actually a solution which is 
well approximated by the sum of the incident wave packet, a reflected second harmonic and some boundary layer terms.
This result confirms the prediction by Dauxois and Young, and provides precise estimates on the time of validity of this approximation.
\end{abstract}


\section{Main results}

 Internal waves are of utmost importance in oceanic flows. They describe small departures from equilibrium in an incompressible fluid under the combined effect of density stratification and  gravity.
These waves are very well described when the effect of boundaries is neglected, assuming for instance that the fluid is contained in a parallelepipedic box with zero flux condition at the boundary (or equivalently in a periodic box). 

\bigskip
Assume that at equilibrium the stratification is given by the stable profile $\bar \rho (x) = \bar \rho(x_3)$ with $\bar \rho '(x_3) <0$. 
Note that, in most physical systems, the variations of $\bar \rho$ are very small compared to its average $\rho_0$, and count only for the buoyancy effect (not for inertia). 
Small perturbations of this equilibrium will create both a (zero divergence) velocity field $\delta v$, and a fluctuation of the density $ \rho = \bar \rho +\delta \dfrac{\rho_0}{g}  b$ of order $\delta \ll 1$, where  $b$ is the buoyancy.
The dynamics of the system is then governed by the {\bf Boussinesq equations}~:
\begin{equation}
\label{boussinesq}
\begin{aligned}
\partial_t v  +  \delta (v \cdot \nabla )  v + \nabla P + b e_3 ={\nu}  \Delta v, \\
\partial_t b +\delta ( v\cdot \nabla ) b  +  { g  \over \rho_0} v_3 \bar \rho'   =\kappa  \Delta b, \\
\nabla\cdot v =0,
\end{aligned}
\end{equation}
see \cite{Michel, Charve, Alazard,DH} for further details on the derivation of the system.
Note that the viscous term  $\kappa  \Delta b$ in the buoyancy equation  comes from the thermal dissipation (combined with the Boussinesq approximation connecting the density and the temperature, see \cite{DY}).
One often considers in addition that the stratification is locally affine so that $\bar \rho'$ is a constant, see again \cite{DY}. For the sake of simplicity, we will focus on this case.
Note however that this assumption is not consistent  with the approximation $\bar \rho \sim \rho_0$ in a large domain, especially in the whole space or in a half space.

\bigskip
Keeping only the leading order terms (the linear inviscid approximation) in (\ref{boussinesq}),  and taking the Fourier transform of this linear system with constant coefficients,
we obtain the {\bf linear inviscid wave equation}
\begin{equation}
\label{wave}
\d _t \begin{pmatrix} \hat v_{p,3} \\ \hat b_p \end{pmatrix} + \begin{pmatrix} 0&   1- {p_3^2\over |p|^2}\\ {g \bar \rho' \over \rho_0}  & 0 \end{pmatrix} \begin{pmatrix}  \hat v_{p,3} \\ \hat b_p\end{pmatrix} = 0\,.
\end{equation}
The solution in $\mathbb{R}^3$ can be expressed as a sum of plane waves with dispersion relation
$$\omega = \pm N {|p_h| \over |p|}\,,$$
where $N= ( - g \bar \rho' /\rho_0)^{1/2}$ denotes the Brunt-V\"ais\"al\"a frequency (see \cite{Michel} for further details).
By analogy with geometric optics for electromagnetic or acoustic waves (see for instance \cite{JMR1, JMR2, Metivier2}), one can use this dispersion relation to study the  ``propagation" of internal waves (which actually makes sense only for wave packets and not for single plane waves, see \cite{NLM}). The crucial difference here is that the frequency $\omega$  prescribes the direction of the propagation 
instead of the modulus of the wavelength~:  in 2D, on the energy level $\omega $, the wavenumber $p$ satisfies indeed
 $${|p_1|\over |p|} =\frac1N  \omega$$
 and the group velocity $\nabla_p \omega$,
 which is orthogonal to $p$, makes an angle $\beta=\pm  \arcsin (\omega/N)$  with respect to the horizontal (see also \cite{DVSR} for a mathematical description of attractors for waves with homogeneous dispersion relation of degree 0).

\bigskip
What we would like to understand in this paper is how these waves behave {\bf in presence of a sloping boundary}. This is a major challenge from the physical point of view. Sandstrom \cite{S}  proposed indeed in 1966 the oceanic internal wave field as a
possible source of the energy which is needed to activate strong mixing  near sloping
boundaries.

\subsection{Physical predictions for the near-critical reflection}

$\bullet$ {\bf The reflection of  inviscid internal waves} off a uniformly sloping bottom was first investigated by Philipps \cite {Ph} in 1966.
Because the
wave frequency is related to the direction of propagation by $\omega = N\sin \beta$,  preservation
of $\omega$  in the reflection implies preservation of the angle $\beta$ (which is  the angle between the group velocity and the horizontal, or equivalently between the phase velocity and the vertical). A simple geometric picture (see Figure \ref{fig: focusing})
shows then that this will generate a focusing mechanism.
\begin{figure} [h] 
\centering
\caption{\small  
Focusing of waves reflected by a sloping bottom.}
\label{fig: focusing}
\includegraphics[scale=0.3]{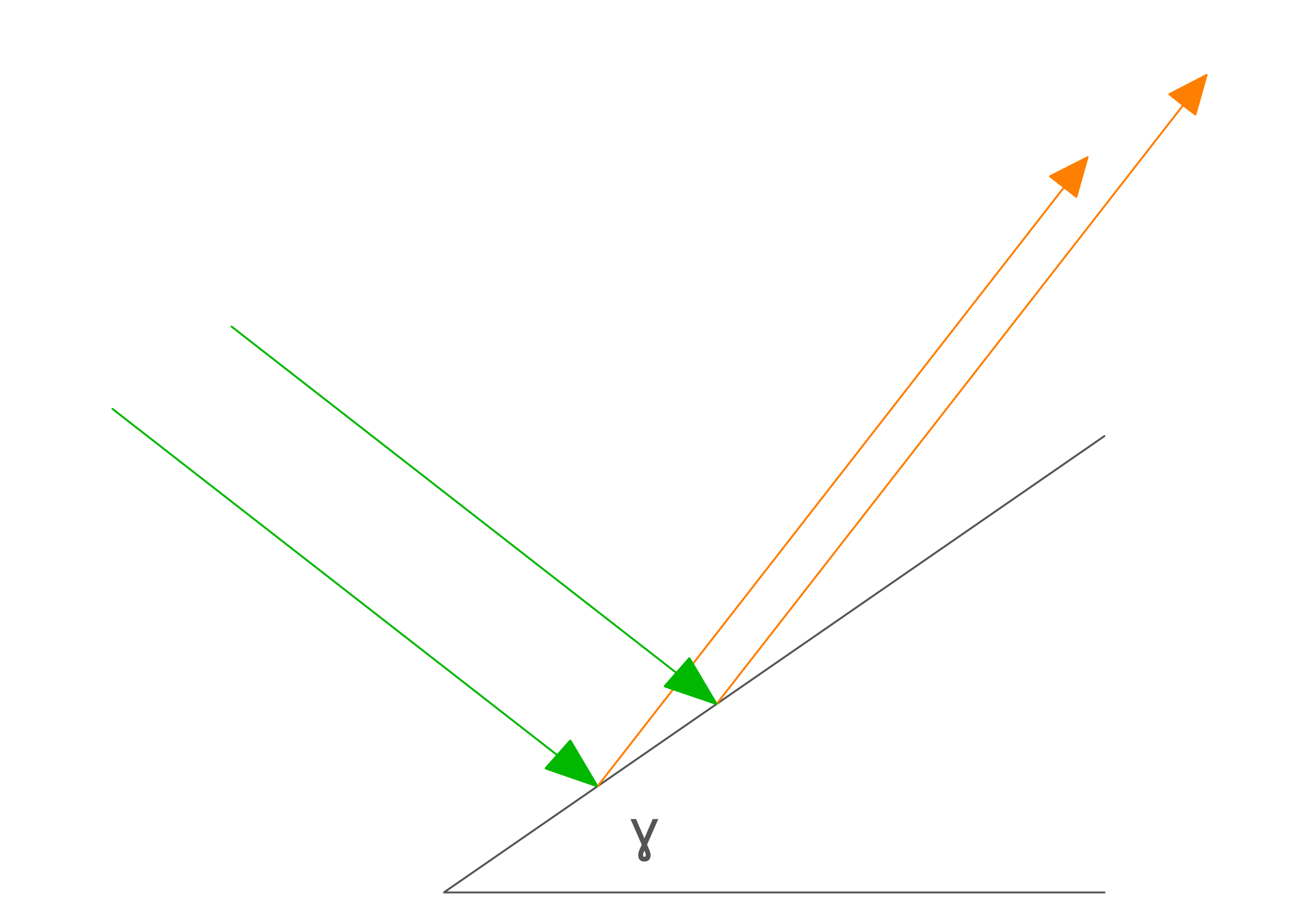}
\end{figure}
More precisely, to determine the reflection laws, we look at the solutions of the linear inviscid wave equation (\ref{wave}) in the half space delimited by the slope 
    $$ x_1 \sin\gamma - x_3 \cos\gamma = 0\,.$$ 
    We seek these solutions in the form of an incident wave  propagating with an angle $\beta$ with respect to the horizontal (recall that the direction of propagation is orthogonal to the wavenumber) plus a reflected wave.
    In order that the zero flux condition (which is the only admissible boundary condition in the inviscid regime) is satisfied on the slope, we then obtain necessary conditions on the wavenumber of the reflected wave,  as well as some polarization conditions to determine the amplitude of the reflected wave. To simplify our study, we work in a 2D setting from now on.

    An appropriate system of coordinates to express these conditions is $(x,y)$ where $x$ is the abscissa along the boundary and $y$ the distance to the boundary (see Figure \ref{fig: coordinates}).
\begin{figure} [h] 
\centering
\caption{\small  
Reference system of coordinates.}
\label{fig: coordinates}
\includegraphics[scale=0.3]{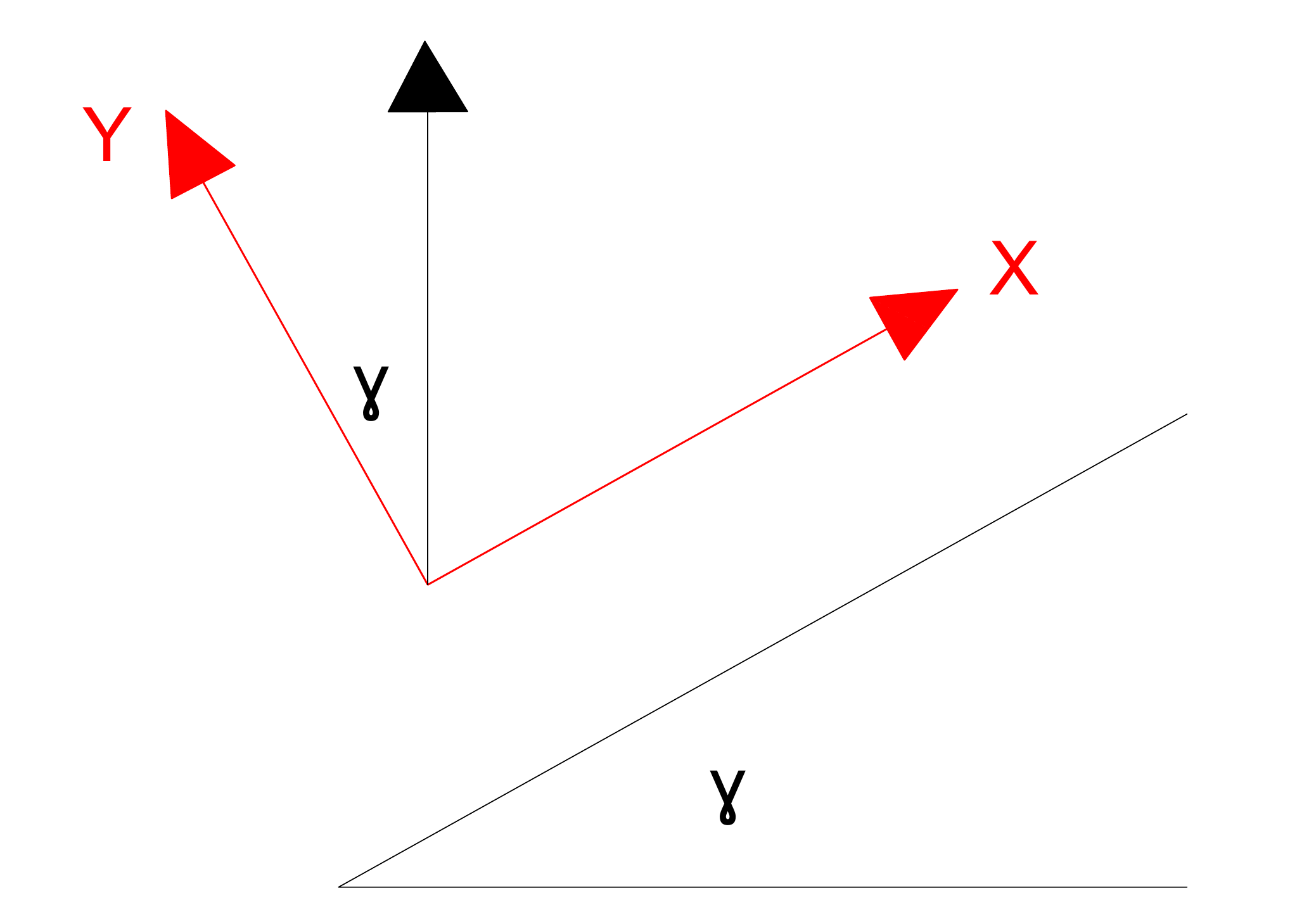}
\end{figure}
     We will denote by $(k,m)$ the corresponding Fourier variables, and  by $(u,w)$ the tangential and normal components of the velocity.
    In these coordinates, the fully nonlinear system \eqref{boussinesq} becomes
    \be
    \label{eq:NS-ep}
    \ba
    \d_t u + \delta \left(u\d_x + w\d_y\right)u - \sin \gamma b + \d_x p=0,\\
    \d_t w + \delta \left(u\d_x + w\d_y\right)w - \cos \gamma b + \d_y p=0,\\
    \d_t b + \delta \left(u\d_x + w\d_y\right)b + N^2 (\sin \gamma u + \cos \gamma w)=0,\\
    \d_x u + \d_y w=0,
\ea   
 \ee
    while    the linear inviscid wave system (\ref{wave})
   can be restated
   \begin{equation}
   \label{tilt-wave}
   \begin{aligned}
-  i\omega  \hat u - \sin \gamma \hat b + ik \hat P = 0,\\
- i\omega \hat  w - \cos \gamma \hat b + im \hat P =0,\\
- i\omega \hat  b + N^2\sin \gamma \hat u +N^2 \cos \gamma \hat w = 0,\\
ik \hat  u  + im \hat  w =0.
\end{aligned}
\end{equation}
Since it has to lift the boundary condition $ w_{|y = 0} = 0$, the reflected wave should have the same horizontal wave number $k$ and time frequency $\omega$ as the incident wave, but a different vertical wave number $m'$, which also satisfies the dispersion relation
\be\label{disp-relation}
\frac{1}{N^2}\omega^2= \frac{(k \cos \gamma - m \sin \gamma)^2}{k^2 + m^2}=\sin^2 \beta.
\ee
 The equation for $m$ and $m'$  is therefore
\be\label{eq:m'}
m^2(\sin^2 \beta - \sin^2 \gamma)+ 2 k m  \cos \gamma \sin \gamma + (\sin^2 \beta-\cos^2 \gamma) k^2 = 0.
\ee
The roots are  $$m= k {\cos \gamma \sin \gamma - \cos \beta \sin \beta\over \sin^2 \beta - \sin^2 \gamma}, \qquad m' = k {\cos \gamma \sin \gamma + \cos \beta \sin \beta\over \sin^2 \beta - \sin^2 \gamma}.$$ 
Note in particular that, as $\beta\to \gamma$, $|m'| \to \infty$. 

The elementary solution of (\ref{tilt-wave}) supplemented with the zero flux condition is therefore of the form
$$
\ba
\begin{pmatrix}
\hat u \\ \hat w \\ \hat b 
\end{pmatrix}
& =  A\left(\begin{array}{c}
1 \\
-\frac{k}{m}\\
N\frac{i(k\cos \gamma-m \sin \gamma)}{m \sin \beta} 
\end{array}\right)e^{-i N\sin \beta t + i k x + i my}\\ 
&+ 
B\left(\begin{array}{c}
1 \\
-\frac{k}{m'}\\
N\frac{i(k\cos \gamma-m' \sin \gamma)}{m' \sin \beta} 
\end{array}\right)e^{-i N \sin \beta t + i k x + i m' y} +c.c.,
\ea$$
where $c.c.$ denotes the complex conjugate,
with the polarization condition 
$$A\frac{k}{m}+B \frac{k}{m'}=0,$$
meaning that the amplitude of the reflected wave is $O(\frac{m'}{m})$.

The most effective situation for boundary mixing arises therefore when an oncoming
wave reflects off a bottom slope with angle $\gamma$ which nearly matches the angle of wave propagation, namely $\beta = \gamma -\eps^2$ with $\eps \ll 1$.
At the critical angle $\beta =\gamma$, the analytic theory of internal waves reflecting off a
uniformly sloping bottom  predicts that the reflected wave has infinite
amplitude and infinitesimal wavelength. These unphysical results signal the failure
of the idealizations (namely, linear waves and inviscid fluid).

\bigskip
$\bullet$ {\bf The viscous and nonlinear effects} associated to the near-critical reflection of internal waves have been studied by Dauxois and Young \cite{DY} in 1999.
The main idea is that, even though the original perturbation generating the incident wave is small, the amplitude of the reflected wave is enhanced by the focusing mechanism,
so that the  nonlinear coupling between the incident and the reflected wave might no longer be negligible.
Moreover, the dissipation is enhanced because of the fact that the normal wavenumber of the reflected wave is much bigger than the original wavenumber.
  
  Actually, when one considers the viscous case, the analysis is much more delicate as the system of equations has to be supplemented with more boundary conditions~: since the equations for $u$, $w$ and $b$ are parabolic, one should impose one boundary condition for each one of these quantities. From the physical point of view, it is natural to prescribe a Dirichlet boundary condition on both components of the velocity (no slip condition), and a Neumann condition on the buoyancy (no diffusive flux through the slope, see \cite{DY} for a discussion on this point). Even at the linear level, i.e. for the viscous wave system
  \begin{equation}
\label{viscous-wave}
\d _t \begin{pmatrix} \hat u_{p}\\ \hat w_{p} \\ \hat b_p\end{pmatrix} + \begin{pmatrix}
 \nu (k^2+m^2) & 0 & {m(k \cos \gamma - m \sin \gamma) \over k^2 +m^2}  \\  0 & \nu (k^2+m^2) & {k(m\sin \gamma - k \cos \gamma) \over k^2 +m^2} \\
N^2\sin \gamma  & N^2\cos \gamma &   \kappa (k^2+m^2)  \end{pmatrix} \begin{pmatrix} \hat u_{p}\\ \hat w_{p} \\ \hat b_p\end{pmatrix}  = 0\,,
\end{equation}
 with $p=(k,m)$, it is therefore much more complicated to compute elementary solutions satisfying the suitable boundary conditions since there are   three matching conditions instead of one. 
  In their paper, Dauxois and Young actually discarded the boundary condition on the buoyancy (which is justified for instance if $\kappa = 0$), and chose the  distinguished scaling $\nu= \eps^6$. They found that elementary solutions can be then decomposed as the sum of 
  \begin{itemize}
  \item the incident wave;
  \item two boundary layer terms with exponential decay in $y/\eps^2$.
  \end{itemize}
  In other words, this means that one can find two values $m_1, m_2$ (distinct from $m$, and having a non negative imaginary part) such that the matrix in (\ref{viscous-wave}) has $i\omega = iN\sin \beta$ as an eigenvalue. These two eigenvalues $m_1$, $m_2$ bifurcate from $m'$ when we add the small viscosity $\nu = O(\eps^6)$, and they are  of order $O(1/\eps^2)$ like $m'$. The matching conditions for $u$ and $w$ at $y= 0$ provide then polarization conditions for the amplitudes of these two damped reflected waves.
  
  In the boundary layer, for $\delta$ small enough, we expect the nonlinear term to be weak so that it should not destroy the wave structure. This means that the nonlinearity can be dealt with as a weak perturbation of the linear evolution. At leading order, we then expect it will produce two new modes, one which is oscillating with frequency $2\omega$ and a mean flow. The amplitudes of these modes should be a small correction to the boundary layer, but which has non zero trace on the boundary! In their paper, Dauxois and Young lifted this (small) remaining trace by adding
  \begin{itemize}
  \item a second harmonic propagating in the outer domain provided that $\frac{2\omega}{N} \le 1$;
  \item a mean flow (which is not written explicitly as it is expected to stay localized close to the boundary).
  \end{itemize}
As a result of this construction, they obtain an approximate solution in the sense that it satisfies the equation (\ref{boussinesq}) with suitable scalings for the parameters of the system, together with the boundary conditions $u_{|y = 0}  = w _{|y = 0} = 0$, up to remainders which should be either smaller in energy, or  of same order but fast oscillating (thus converging weakly to 0). We will say that the approximation is consistent. This Ansatz shows in particular that there is a critical amplitude where the nonlinearity, although small, is no longer negligible even in the outer domain. This is made evident by the generation of the second harmonic.

\bigskip
$\bullet$ {\bf This scenario has been validated by
experimental visualization} of the reflection process.
The lab experiments conducted by physicists, especially in the group of   Dauxois \cite{DDF, GDDSV}, have indeed confirmed that the nonlinearity plays a key role in the boundary layer.

Dauxois, Didier, and Falcon  introduced  in \cite{DDF} the Schlieren technique  to study the spatiotemporal
evolution of the internal waves reflection close
to the critical reflection. The internal wave, producing density disturbances,
causes lines to distort, this distorting line pattern
being recorded by a camera. Note that this experimental
technique is sensitive to the index gradient, and therefore to
the density gradient. The dynamics of isopycnals is then found in good qualitative agreement with
the theory. Moreover, this experiment confirms
the theoretically predicted scenario for the transition to
boundary-layer turbulence responsible for boundary mixing:
the growth of a density perturbation produces a statically
unstable density field which then overturns with small-scale
fluctuations inside.

Peacock and Tabaei then presented  in \cite{PT} the first set of experimental visualizations (using also the digital Schlieren method) that confirm the existence of radiated higher-harmonic beams.
  For arrangements in which the angle of propagation of the second harmonic exceeds the slope angle, radiated beams are visualized. When the propagation angle of the second harmonic deceeds the slope angle no radiated beams are detected, as the associated density gradient perturbations are too weak for the experimental method. The case of a critical slope is also reported.

Quantitative results have finally been obtained in \cite{GDDSV}.  Experiments were carried out in the  Coriolis
platform, in Grenoble, filled with salted water. The large
scale of the facility allows to strongly reduce the viscous
dissipation along wave propagation and quantitative
results are obtained thanks to high-resolution Particle Image Velocimetry measurements.
Generation of the second and third harmonic frequencies is clearly demonstrated in the impact zone.
Although these harmonics are almost invisible from the
instantaneous velocity field, they are very clearly apparent
after the filtering procedure. These experiments also provide evidence that
harmonics with frequency higher than $N$ cannot propagate
and remain trapped near the slope.

\subsection{Mathematical description of the near-critical reflection}

The mathematical analysis of the near-critical reflection follows essentially the same lines, but requires a more careful treatment of some delicate points which we will explain now.

First of all, {\bf the consistency of the approximation is not sufficient} to deduce that the solution of the original problem will be close to the approximate solution. This issue  is related to the possible instabilities of the system~: a small error on the equation (in the form of a source term $R_{app}$) could then generate important deviations of the dynamics. This means that, in order to prove that the matched asymptotic expansion $\cW_{app}$  provides a good approximation of what happens in reality, one needs to prove some stability for the original weakly nonlinear system (\ref{boussinesq}). 
The classical way of doing so is to establish energy inequalities~: for the Navier-Stokes equations, we typically expect that the growth of the $L^2$ norm $\| \cW- \cW_{app}\|_{L^2}$ should be controlled by the Lipschitz norm $\delta \| \nabla \cW_{app}\|_{L^\infty}$, provided that there is no error (at all) on the boundary conditions. 
\be\label{inequality:energy}
\begin{aligned}
 \| (\cW-\cW_{app}) (t) \|_{L^2}^2  \leq &  \| (\cW-\cW_{app}) (0) \|_{L^2}^2 \exp (\delta \| \nabla \cW_{app}\|^2_{L^\infty} t ) \\
 & +\int_0^t  \| R_{app}(s) \|_{L^2}^2  \exp (\delta \| \nabla \cW_{app}\|^2_{L^\infty} (t-s)  ) ds\,.
 \end{aligned}
 \ee
 However this has many implications. More details will be provided in Section \ref{subsec:stability-inequality}.
\begin{itemize}
\item[(i)] We need to work with solutions of finite energy (or at least such that $\cW-\cW_{app}$ has finite energy), which is not the case of (a finite sum of) plane waves!
\item[(ii)] The remainder has to be small (in energy), and actually smaller than the second harmonic if we would like to prove that this term is physically relevant.
\item[(iii)] The time interval on which the approximation is accurate depends on  the Lipschitz (or at least the $L^\infty$) norm of the approximate solution.
\end{itemize}
Points (ii) and (iii) are somehow technical, they can be tackled by tracking the dependency of $\cW_{app}$ and $R_{app}$ with respect to the different parameters. We will also need to construct some additional correctors to lift the boundary conditions. We refer to Section \ref{sec:NL-BL} for these technical details.

Point (i) is more tricky. It is related to the fact that  a ``plane wave" is not a good physical object (although it is considered as a very basic object in physics!) This has already been mentioned in the introduction when talking about propagation of wave and group velocity. These notions do not make sense for a single plane wave, but rather for a wave packet. Plane waves just provide  a tool to decompose these wave packets on elementary objects (just like the Fourier transform).  A very pedagogical  discussion on these topics can be found for instance in \cite{NLM}. In all the sequel of this paper, we will thus consider wave packets, with energy density concentrated close to the frequency $\omega_0 = \sin \gamma$ in order to see the effects of criticality. Of course, as long as we study linear equations, looking at a wave packet (i.e. at a superposition of plane waves) does not introduce any additional difficulty for the matched asymptotic expansion. But this is no longer true when the nonlinearity enters into the game.

\bigskip
Let us then go back to the {\bf construction of the approximate solution} $\cW_{app}$, as there are also here some points which need to be clarified.
\begin{itemize}
\item[(iv)] In the construction by Dauxois and Young \cite{DY}, one boundary condition (the one on the buoyancy $b$) is discarded, supposedly because it should produce effects of higher order. This is however not completely clear from the computations given in the paper (see the comments p. 283). Section \ref{sec:linear_BL} of the present paper will provide a very systematic construction for linear boundary layers. We refer to \cite{GVP, DSR} for a presentation of the method. We will see in particular that the solution to the viscous wave equation (\ref{viscous-wave}) involves a superposition of boundary layers of different sizes, and that there is no overdetermination of the problem if we take into account all these contributions.
\item[(v)] Since we will consider wave packets, we will need to understand how to deal with the nonlinear interactions in the boundary layer for a general superposition of waves (see Section \ref{sec:NL-BL}). Section \ref{sec:NL-BL} will provide a careful analysis of the generation of both the rectified (or mean) flow and the second harmonic.

\item[(vi)] The rectified flow  in \cite{DY} (defined as the component with $\omega=0$ generated by the weak nonlinearity) is neglected since it has to vanish far from the boundary (see p. 282). Although in \cite{DY} there is no quantitative estimate of the decay rate, nor of the energy contained in this rectified flow (which could be more energetic than the second harmonic for instance), this is in agreement with our construction. We have indeed two boundary layer mean flows (vanishing far from the boundary), and the third boundary condition can be lifted in a very crude way (by adding a small corrector which has no physical relevance).

\end{itemize}

\subsection{Notation}

Our construction of an approximate solution will involve many terms, which all play a different role. For the reader's convenience, we have gathered here the notations that will be used throughout the paper, and by doing so we roughly sketch the main steps of the construction.

\begin{itemize}
	\item We denote by $\cW$ the vector $\begin{pmatrix}
u\\w\\b
	\end{pmatrix}$;
	
	\item The first step of our construction is the definition of a solution of the linear problem. This solution will be the main order term in the approximate solution, and therefore we will denote it by $\cW^0$;
	
	\item The first order correctors, which lift the nonlinearity $\delta Q(\cW^0, \cW^0)$, will be denoted by $\cW^1$.

\end{itemize}

The notations $\cW^j$ allow us to distinguish between terms of different orders within the approximate solution. But we also need to specify the nature of each term within a given order. For instance, $\cW^0$ will be the sum of an incident wave packet, of a boundary layer term localized within a region of width $\eps^2$ of the boundary $y=0$, and of a boundary layer term localized within a region of width $\eps^3$ of the boundary $y=0$. Therefore we adopt the following notation:

\begin{itemize}
\item 	The incident wave packet will be denoted $\Winc$;

\item The boundary layer terms localized within a region of width $\eps^2$ (resp. $\eps^3$) of the boundary will be denoted $\WBLII^j$ (resp. $\WBLIII^j$), where the superscript $j$ refers to the order of the term they belong to;

\item The mean flow will be denoted by $\WMF$;

\item The second harmonic will be denoted by $\WSH$.
\end{itemize}

\subsection{The stability result}\label{subsec:stability}
From now on, we choose $N=1$ in order to alleviate the notation.\\
We consider the Cauchy problem associated to the Boussinesq equations in $\R^2_+=\R\times \R^+$,
\begin{equation}
\label{original_system}
\ba
\partial_t u - b \sin \gamma + \partial_x p + \delta(u \partial_x u + w \partial_y u)=\nu_0 \varepsilon^6 \Delta u, \\
\partial_t w - b \cos \gamma + \partial_y p + \delta(u \partial_x w + w \partial_y w)=\nu_0 \varepsilon^6 \Delta w, \\
\partial_t b + u \sin \gamma + w \cos \gamma + \delta(u \partial_x b + w \partial_y b)=\kappa_0 \varepsilon^6 \Delta b, \\
\partial_x u + \partial_y w=0,
\ea
\end{equation}
endowed with the boundary conditions
\begin{equation}
\label{BC}
u_{|y = 0} = w_{|y = 0} = \d_y b_{|y = 0} = 0\,.
\end{equation}
Note that this is a just rewriting of the Boussinesq system (\ref{boussinesq}) in the slope coordinates, nothing but the viscous version of system (\ref{eq:NS-ep}). As discussed at the beginning of Section \ref{sec:NL-BL}, after a complete linear analysis of all the possible regimes induced by the different parameters of the system in Section \ref{sec:linear_BL}, the nonlinear system will be treated according to the scalings by Dauxois and Young in \cite{DY}, where, in particular, the size of the viscosity is $\nu=\nu_0 \varepsilon^6, \, \kappa=\kappa_0 \varepsilon^6$.\\
Our main result is a stability estimate for the approximate solution $\cW_{app}=(u_{app}, w_{app}, b_{app})$, i.e. an $L^2$ estimate of the difference between $\cW_{app}$ and the weak solutions $\cW=(u, w, b)$ to (\ref{original_system})-(\ref{BC}). Before providing the stability theorem, the global existence of weak solutions to the Cauchy problem for system (\ref{original_system}) with boundary conditions (\ref{BC}) is stated below. \\
We will use the following notation.
\be\label{def:functional-spaces}
\ba
&
 \V_\sigma:=\{ (u,w,b)\in H^1(\R^2_+)^3,\ u_{|y=0}=w_{|y=0}=0,\  \partial_x u + \partial_y w=0\},\\
&\V_\sigma'=\text{dual space of } \V_\sigma. \\
\ea
\ee
\begin{Prop}\label{prop:weak-solutions}
Let $\cW_0=(u_0, w_0, b_0)$ a divergence free $L^2$ initial data. Then there exists a unique global weak solution $$\cW \in C(\R^+; \V_\sigma') \cap L^\infty(\R^+; L^2(\mathbb{R}^2_+)) \cap L^2_{loc}(\R^+; \V_\sigma)$$ to system (\ref{original_system})-(\ref{BC}), which satisfies the following energy inequality for all $t \ge 0$,
\be \label{energy-inequality}
\ba
\|\cW(t)\|^2_{L^2(\R^2_+)} + 2 \varepsilon^6 \nu_0 \int_0^t \Bigg(\|\nabla u(s)\|^2_{L^2(\R^2_+)}+ \|\nabla w(s)\|_{L^2(\R^2_+)}^2\Bigg) \, ds\\
+2\varepsilon^6 \kappa_0 \int_0^T \|\nabla b(s)\|^2_{L^2(\R^2_+)} \, ds
\le \|\cW_0\|^2_{L^2(\R^2_+)}.
\ea
\ee
\end{Prop}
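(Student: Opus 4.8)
The plan is to prove Proposition \ref{prop:weak-solutions} by the classical Leray--Hopf scheme adapted to this stratified, damped system, treating it as a perturbation of the two-dimensional Navier--Stokes equations in a half-space. First I would set up a Galerkin approximation: pick a basis of $\V_\sigma$ (for instance eigenfunctions of the Stokes operator on $\R^2_+$ with the no-slip condition, or a suitable countable total family since the domain is unbounded, possibly after an exhaustion by bounded subdomains), together with an $H^1$ basis for the buoyancy component that is compatible with the Neumann condition $\partial_y b_{|y=0}=0$. Project \eqref{original_system}--\eqref{BC} onto the finite-dimensional spaces; the pressure $p$ drops out against divergence-free test functions, and one obtains a system of ODEs with quadratic nonlinearity, hence local-in-time solvability of the approximate problem $\cW_n$.

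The key a priori estimate is the energy identity \eqref{energy-inequality}: testing the first two equations of \eqref{original_system} against $(u_n,w_n)$ and the third against $b_n$, the pressure term vanishes by incompressibility and the boundary condition $w_{|y=0}=0$; the transport terms $\delta(u\partial_x+w\partial_y)(\cdot)$ vanish after integration by parts, again using $\Div=0$ and the no-slip/no-flux conditions on $y=0$ (no boundary contribution appears because $u_{|y=0}=w_{|y=0}=0$ for the velocity equations, and the buoyancy flux $\partial_y b_{|y=0}=0$ kills the boundary term in the $\kappa_0\eps^6\Delta b$ integration by parts). Crucially the two off-diagonal stratification couplings are skew-adjoint: the terms $-b\sin\gamma\, u - b\cos\gamma\, w$ from the velocity equations cancel exactly against $u\sin\gamma\, b + w\cos\gamma\, b$ from the buoyancy equation. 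What remains is precisely $\frac{d}{dt}\tfrac12\|\cW_n\|_{L^2}^2 + \eps^6\nu_0(\|\nabla u_n\|^2+\|\nabla w_n\|^2) + \eps^6\kappa_0\|\nabla b_n\|^2 = 0$, which integrates to \eqref{energy-inequality} with equality for the Galerkin solutions; in particular the ODE solution is global. This gives uniform bounds: $\cW_n$ bounded in $L^\infty(\R^+;L^2)$ and $\nabla\cW_n$ bounded in $L^2_{loc}(\R^+;L^2)$, i.e. $\cW_n$ bounded in $L^2_{loc}(\R^+;\V_\sigma)$ (here one must check the $H^1$ bound on $b_n$ also controls the $L^2$ norm of $b_n$ itself — true on any finite time interval from the energy bound). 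Estimating the nonlinear term in $\V_\sigma'$ by the 2D interpolation/Ladyzhenskaya inequality $\|f\|_{L^4(\R^2_+)}^2\lesssim\|f\|_{L^2}\|\nabla f\|_{L^2}$, one gets $\partial_t\cW_n$ bounded in, say, $L^2_{loc}(\R^+;\V_\sigma')$ (or $L^{4/3}$ — whichever the interpolation yields), so the Aubin--Lions--Simon lemma applies on each bounded time-space subdomain.

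Next I would pass to the limit: extract a subsequence converging weakly-$*$ in $L^\infty(L^2)$, weakly in $L^2_{loc}(\V_\sigma)$, and strongly in $L^2_{loc}(\R^+\times K)$ for every bounded $K\subset\R^2_+$ by Aubin--Lions (using local compactness of the Sobolev embedding on bounded subdomains; the unboundedness of $\R^2_+$ is handled by a diagonal argument over an exhaustion). Strong $L^2_{loc}$ convergence is enough to pass to the limit in the quadratic terms $\int u_n w_n\partial_y\varphi$ etc. against compactly supported divergence-free test functions, and weak convergence suffices for the linear terms; the time-continuity into $\V_\sigma'$ follows from the bound on $\partial_t\cW$. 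The energy \emph{inequality} \eqref{energy-inequality} for the limit — rather than equality — follows from weak lower semicontinuity of the $L^2$ norm and of the $L^2(L^2)$ norms of the gradients under these convergences, together with convergence of the initial data. For uniqueness, since we are in dimension two, I would run the standard argument: given two weak solutions $\cW,\tilde\cW$ with the same data, set $\cW-\tilde\cW$, write its equation, test against itself (this is legitimate because in 2D the Leray--Hopf solution enjoys enough regularity — $\cW\in L^4_{loc}(L^4)$ by Ladyzhenskaya — to make the weak formulation valid with the solution itself as test function), and control the only surviving term $\delta\int((\cW-\tilde\cW)\cdot\nabla)\tilde\cW\cdot(\cW-\tilde\cW)$ by $\delta\|\cW-\tilde\cW\|_{L^4}^2\|\nabla\tilde\cW\|_{L^2}\le \eps^6\nu_0\|\nabla(\cW-\tilde\cW)\|_{L^2}^2 + C(\delta,\eps)\|\nabla\tilde\cW\|_{L^2}^2\|\cW-\tilde\cW\|_{L^2}^2$; Grönwall then forces $\cW=\tilde\cW$ since $\|\nabla\tilde\cW\|_{L^2}^2\in L^1_{loc}$. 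The buoyancy and stratification-coupling terms again cancel or are controlled the same way, and crucially carry no boundary contributions thanks to \eqref{BC}.

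The main obstacle I expect is not the energy cancellations (those are algebraic and robust) but two more delicate points. First, the \emph{pressure recovery and the boundary conditions in the weak formulation}: one must be careful that the Neumann condition $\partial_y b_{|y=0}=0$ is the natural condition for the variational problem (it appears only weakly, via the absence of a boundary term), while $u_{|y=0}=w_{|y=0}=0$ is essential and encoded in $\V_\sigma$ — so the function space for $b$ is just $H^1_{loc}$ with no trace constraint, and the Galerkin basis must be chosen accordingly; mismatching these is the classic pitfall. Second, the \emph{unboundedness of $\R^2_+$}, which breaks the compactness of $H^1\hookrightarrow L^2$ and also means the Stokes operator has continuous spectrum, so one cannot literally use its eigenbasis. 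The clean fix is to work on truncated domains $\R^2_+\cap B_R$ (or $(-R,R)\times(0,R)$) with, say, additional artificial boundary conditions on the far walls, obtain solutions there with the same energy inequality (all constants uniform in $R$ since the energy estimate is dimension- and domain-independent), and then let $R\to\infty$ using a diagonal extraction and the fact that test functions are compactly supported. Everything else — global-in-time existence from the a priori bound, the 2D uniqueness argument — is then routine.
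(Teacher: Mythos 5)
Your plan is essentially correct, and its skeleton (energy identity with the skew-symmetric cancellation of the stratification coupling, uniform bounds, compactness and passage to the limit with lower semicontinuity giving the inequality, then 2D Lions--Prodi/Ladyzhenskaya uniqueness) coincides with what the paper does; the genuine difference lies in how the approximation is built on the unbounded half-plane. You correctly identify the two obstructions (no compact resolvent, hence no eigenbasis of the Stokes operator; no explicit Fourier description of the Leray projector in $y$), but you resolve them by invading bounded domains $\R^2_+\cap B_R$ with artificial far-field boundary conditions, Galerkin on each truncation, and a diagonal argument as $R\to\infty$. The paper instead follows \cite{CDGG}: it keeps the half-plane throughout and replaces the eigenbasis by the spectral family $\bP_\eta$ of the (self-adjoint, non-compact-resolvent) Stokes operator, obtained from the spectral theorem applied to its bounded inverse, shows $\bP_\eta$ extends to $\V'$ and converges to the Leray projector, and then runs a Friedrichs approximation on $\K_k=\bP_k\K$. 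What the paper's route buys is that every approximate solution is divergence-free, satisfies the true boundary conditions \eqref{BC} on the actual domain, and is smooth enough to justify the energy computation directly (a fact reused in Section \ref{sec:stability}); there are no artificial boundaries whose conditions must be shown not to pollute the limit. What your route buys is elementarity: no spectral calculus, only Galerkin on bounded domains plus Aubin--Lions locally; the price is the extra bookkeeping you already flag (choice of artificial conditions, extension/localization of $b$ which is only $H^1$ and not $H^1_0$, and checking that the essential Dirichlet trace at $y=0$ survives the weak limit while the Neumann condition on $b$ is recovered only variationally). Both are viable; just note that your first suggestion, literally using ``eigenfunctions of the Stokes operator on $\R^2_+$'', must indeed be abandoned for the reason you give, and the truncation (or the paper's spectral-projection) device is then not optional but the actual content of the proof.
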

The proof of this theorem follows by adapting the result of global weak (Leray) solutions to the incompressible Navier-Stokes equation in general domains in \cite{CDGG}. The crucial point is the conservation of energy, represented by the energy inequality (\ref{energy-inequality}), which can be simply obtained in the usual way, by taking the scalar product of (\ref{original_system}) with $\cW$.\\
For the sake of completeness, we will provide a sketch of the argument in the Appendix.\\
We now  state our main result.
\begin{Thm}\label{Thm:stability}
Consider the Boussinesq equations (\ref{original_system}) in the scaling by Dauxois and Young in $\mathbb{R}_+^2=\mathbb{R} \times \mathbb{R}_+$, with boundary conditions (\ref{BC}).\\ Then there exists a vector field 
$$\cW_{app}:=(u_{app}, w_{app}, b_{app})=\Winc+\cW_{BL}+\cW^1_{II}+\cW_{corr},$$
where $\Winc, \,\cW_{BL}, \, \cW^1_{II}, \, \cW_{corr}$ are respectively an incident wave packet, a boundary layer, a second harmonic wave packet and an additional correction term, which is an approximate solution, in the sense that
\begin{align*}
\ba
\partial_t u_{app} - b_{app} \sin \gamma + \partial_x p_{app} + \delta(u_{app}, w_{app}) \cdot \nabla u_{app}-\nu_0 \varepsilon^6 \Delta u_{app}=O(\delta \varepsilon^2), \\
\partial_t w_{app} - b_{app} \cos \gamma + \partial_y p_{app} + \delta(u_{app}, w_{app}) \cdot \nabla w_{app}-\nu_0 \varepsilon^6 \Delta w_{app}=O(\delta \varepsilon^2), \\
\partial_t b_{app} + u_{app} \sin \gamma + w_{app} \cos \gamma + \delta(u_{app}, w_{app}) \cdot \nabla b_{app}-\kappa_0 \varepsilon^6 \Delta b_{app}=O(\delta \varepsilon^2), \\
\partial_x u_{app} + \partial_y w_{app}=0,
\ea
\end{align*}
where the remainders $O(\delta \varepsilon^2)$ have to be understood in the sense of the $L^2(\mathbb{R}^2_+)$ norm, and 
endowed with the boundary conditions
\begin{align*}
{u_{app}}_{|y = 0} = {w_{app}}_{|y = 0} = \d_y {b_{app}}_{|y = 0} = 0\,.
\end{align*}

Furthermore, denoting by $\cW$ the unique weak solution to the Cauchy problem associated with system (\ref{original_system})-(\ref{BC}), with initial data 
$$\cW_0=\cW_{app}(t=0),$$
we have the following stability estimate:
\begin{equation}\label{inequality:stability}
\|(\cW_{app}-\cW)(t)\|_{L^2(\R^2_+)} \le \delta \varepsilon^2 \exp((\delta \varepsilon^{-2}+1) t).
\end{equation}
\end{Thm}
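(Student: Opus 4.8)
The final statement contains two claims, which I would establish separately: the \emph{existence} of an approximate solution $\cW_{app}$ with the stated structure, consistency and boundary properties; and the \emph{stability} bound \eqref{inequality:stability} for $\cW-\cW_{app}$. The second is a Gr\"onwall-type energy argument and is short; the first, carried out in Sections~\ref{sec:linear_BL}--\ref{sec:NL-BL}, is the heart of the matter, and is where I expect the main obstacle to lie.

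\textbf{Step 1: construction of $\cW_{app}$, linear order.} The plan is to build $\cW_{app}$ order by order in $\delta$. At order $0$ one works with the linear viscous system. Start from the incident wave packet $\Winc$: a finite-energy superposition of inviscid plane waves of \eqref{tilt-wave} whose spectrum is concentrated at distance $O(\eps^2)$ from the critical frequency $\omega_0=\sin\gamma$, supported away from $\{y=0\}$ at time $0$, so that $\Winc$ is divergence free and solves the inviscid linear system exactly but fails the boundary conditions \eqref{BC}. To correct this I would run the systematic linear boundary-layer analysis of Section~\ref{sec:linear_BL}: for each horizontal wavenumber $k$ in the packet and for the fixed temporal eigenvalue $i\omega$, the characteristic polynomial in $m$ attached to the matrix of \eqref{viscous-wave} has, besides the $O(1)$ root of the incident wave, a pair of roots of size $O(\eps^{-2})$ bifurcating from the Dauxois--Young reflected wavenumber $m'$ and a further pair of genuinely viscous roots of size $O(\eps^{-3})$; retaining those with nonnegative imaginary part produces two boundary-layer profiles, $\WBLII^0$ decaying on the scale $y/\eps^2$ and $\WBLIII^0$ decaying on the scale $y/\eps^3$. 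Imposing $u=w=\d_y b=0$ at $y=0$ on $\cW^0:=\Winc+\WBLII^0+\WBLIII^0$ fixes the scalar amplitudes of the two layers through polarization relations, and — this resolves item~(iv) — once \emph{all} layers are kept the linear problem is exactly determined, with no overdetermination from the Neumann condition on $b$. By construction $\cW^0$ is divergence free, satisfies \eqref{BC} exactly, and solves the \emph{viscous} linear system up to an $L^2$ remainder that is a power of $\eps$ smaller than the forcing.

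\textbf{Step 2: first-order correctors and the remainder.} Inserting $\cW^0$ into the quadratic advection term $\delta Q(\cW^0,\cW^0)$, the dominant contributions come from the self-interactions of the boundary layers, whose normal derivatives are large; decomposing in time frequency, this forcing splits into a frequency-$0$ piece and a frequency-$2\omega$ piece. The first is absorbed by a rectified (mean) flow $\WMF$, which — as in item~(vi) — consists of two boundary-layer mean flows, one per width, so no interior rectified flow is needed; the second is absorbed by a second-harmonic corrector whose interior part $\WSH=\cW^1_{II}$ is a genuine wave packet radiating away from the slope (this uses $2\omega_0=2\sin\gamma<1$) and whose boundary-layer part is again a superposition of the two layer scales. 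These correctors leave small nonzero traces on $\{y=0\}$, which are finally lifted by a crude divergence-free corrector $\cW_{corr}$ (built, e.g., from a stream function with a fast normal variable), small in $L^2$ and with controlled Lipschitz norm and of no physical significance. Collecting everything, $\cW_{app}=\Winc+\cW_{BL}+\cW^1_{II}+\cW_{corr}$ — with $\cW_{BL}$ gathering $\WBLII^0$, $\WBLIII^0$, $\WMF$ and the boundary-layer part of the second harmonic — is divergence free, satisfies \eqref{BC} exactly, and, after collecting the viscous residual of $\Winc$, the quadratic interactions not of the two special frequencies, the self-interactions of the correctors and the equation residual of $\cW_{corr}$, and after choosing the amplitude of $\Winc$ appropriately in $\eps$, obeys $\|R_{app}(t)\|_{L^2(\R^2_+)}=O(\delta\eps^2)$ and $\delta\|\nabla\cW_{app}\|_{L^\infty}^2=O(\delta\eps^{-2})$ uniformly on the relevant time interval. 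I expect \emph{this} to be the main obstacle: identifying all the boundary-layer scales and proving the matching problem is well posed (neither over- nor under-determined), handling the nonlinear interactions for a \emph{wave packet} rather than a single plane wave, and tracking scalings precisely enough to certify the $O(\delta\eps^2)$ size of $R_{app}$.

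\textbf{Step 3: the stability estimate.} Set $\cW_R:=\cW-\cW_{app}=(u_R,w_R,b_R)$. Since $\cW_0=\cW_{app}(0)$ we have $\cW_R(0)=0$; since both $\cW$ and $\cW_{app}$ satisfy \eqref{BC}, so does $\cW_R$; and $\cW_R\in L^\infty(\R^+;L^2)$ because $\Winc$, the boundary layers and the second-harmonic packet all have finite energy (this is point~(i) of the introduction). Subtracting the equations for $\cW_{app}$ from \eqref{original_system}, the field $\cW_R$ solves the Boussinesq system with forcing $-R_{app}$, zero boundary data, zero initial data, and with nonlinearity $\delta[(u_R\d_x+w_R\d_y)\cW+(u_{app}\d_x+w_{app}\d_y)\cW_R]$. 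As $\cW$ is only a weak solution, rather than integrating by parts naively I would run the standard relative-energy argument: use the smooth field $\cW_{app}$ as a test function in the weak formulation of the $\cW$-equation and combine it with the energy inequality \eqref{energy-inequality} of Proposition~\ref{prop:weak-solutions} and with the equation satisfied by $\cW_{app}$. In the resulting differential inequality for $\tfrac12\|\cW_R\|_{L^2}^2$: the buoyancy coupling $-\sin\gamma\,b_R u_R-\cos\gamma\,b_R w_R+(\sin\gamma\,u_R+\cos\gamma\,w_R)b_R$ integrates to $0$ (here $N=1$ is used); the pressure term vanishes against the divergence-free $\cW_R$, the boundary term dropping since $w_R|_{y=0}=0$; the viscous term contributes $\nu_0\eps^6\|\nabla(u_R,w_R)\|_{L^2}^2+\kappa_0\eps^6\|\nabla b_R\|_{L^2}^2\ge 0$, its boundary terms vanishing by the Dirichlet conditions on $u_R,w_R$ and the Neumann condition on $b_R$; among the nonlinear terms, $\int(u_{app}\d_x+w_{app}\d_y)\cW_R\cdot\cW_R$ and $\int(u_R\d_x+w_R\d_y)\cW_R\cdot\cW_R$ both vanish by the divergence-free condition and the boundary conditions, so that only $\delta\int(u_R\d_x+w_R\d_y)\cW_{app}\cdot\cW_R\le\delta\|\nabla\cW_{app}\|_{L^\infty}\|\cW_R\|_{L^2}^2$ survives. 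Combining this with $|\la R_{app},\cW_R\ra|\le\tfrac12\|R_{app}\|_{L^2}^2+\tfrac12\|\cW_R\|_{L^2}^2$ and a Young inequality on the nonlinear term yields
\[
\frac{d}{dt}\|\cW_R\|_{L^2}^2\;\le\;\big(\delta\|\nabla\cW_{app}\|_{L^\infty}^2+1\big)\|\cW_R\|_{L^2}^2+\|R_{app}\|_{L^2}^2 ,
\]
exactly the mechanism anticipated in \eqref{inequality:energy}. Gr\"onwall's lemma, together with $\cW_R(0)=0$ and the bounds $\|R_{app}\|_{L^2}=O(\delta\eps^2)$, $\delta\|\nabla\cW_{app}\|_{L^\infty}^2=O(\delta\eps^{-2})$ from Step~2, then gives $\|\cW_R(t)\|_{L^2}\lesssim\delta\eps^2\exp((\delta\eps^{-2}+1)t)$, which is \eqref{inequality:stability}; in particular it exhibits the time scale $\sim\eps^2\delta^{-1}|\log\eps|$ on which the approximation is accurate. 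The only genuine care needed in this step is the weak-solution bookkeeping; the rest of the algebra is dictated by the structural cancellations listed above.
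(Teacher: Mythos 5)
Your route is the paper's route: the same linear construction (incident packet plus $\eps^2$ and $\eps^3$ boundary layers lifting all three conditions, with no overdetermination), the same weakly nonlinear correctors (second harmonic, mean flow, boundary-layer correctors at both scales, and a crude divergence-free lift of the leftover normal trace), and the same $L^2$ energy/Gr\"onwall argument for the difference; your handling of the weak solution (testing with the smooth $\cW_{app}$ plus the energy inequality of Proposition \ref{prop:weak-solutions}) is an acceptable substitute for the paper's Friedrichs-approximation bookkeeping. Two concrete slips should nevertheless be corrected. First, the crude corrector $\cW_{corr}$ cannot be a stream function with a \emph{fast} normal variable: if the cutoff varies on a boundary-layer scale, the tangential component $-G(x)\,\Theta'(y)$, and hence the time/linear residual of the corrector, become of order $\delta\eps^{-1}$ or worse, which destroys the $O(\delta\eps^2)$ consistency. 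The paper does the opposite (Lemma \ref{lem:mean-flow-a}): a slowly varying cutoff $\theta(\eps^2 y)$, so that the corrector is $O(\delta\eps^2)$ in $H^s$, each derivative gains a factor $\eps^2$, and its entire linear residual can be absorbed into $r^1$. Second, your displayed differential inequality carries the coefficient $\delta\|\nabla\cW_{app}\|_{L^\infty}^2$, which you assert is $O(\delta\eps^{-2})$; since $\|\nabla\cW_{app}\|_{L^\infty}=O(\eps^{-2})$ (Proposition \ref{prop:error-estimate}), that squared quantity is $O(\delta\eps^{-4})$ and would not yield \eqref{inequality:stability}. The correct coefficient is the first power $\delta\|\nabla\cW_{app}\|_{L^\infty}=O(\delta\eps^{-2})$, which is exactly what your own estimate of the surviving term $\delta\int(u_R\d_x+w_R\d_y)\cW_{app}\cdot(\cW-\cW_{app})$ gives; the $+1$ in the exponent then comes from Young's inequality on the remainder term, as in the paper.

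A few minor points. The second harmonic is a propagating packet only when $\sin\gamma<1/2$; for $\sin\gamma>1/2$ it is evanescent (Lemma \ref{lem:second-harmonic-a}), and the theorem does not require the propagating case, so you should not build the argument on $2\omega_0<1$. The decaying modes are selected by positive \emph{real} part of $\lambda$ (in the Dauxois--Young scaling there are three roots of size $\eps^{-2}$, two of which are retained, plus one retained root of size $\eps^{-3}$), not by the sign of the imaginary part. Finally, no tuning of the amplitude of $\Winc$ is needed: the paper's remainder is $O(\delta\eps^2)+O(\delta^2)+O(\eps^6)$ (Proposition \ref{prop:error-estimate}), which is $O(\delta\eps^2)$ in the implicit regime $\eps^4\lesssim\delta\lesssim\eps^2$; this is the hypothesis you should make explicit rather than adjusting the incident amplitude.
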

\begin{Rmk}
Let us add some comments on Theorem \ref{Thm:stability}.
\begin{itemize}
\item The first part of the statement could be reformulated saying that $\cW_{app}$ is a \textbf{consistent approximate solution}, which means that it satisfies the equations of system (\ref{original_system})-(\ref{BC}) modulo a remainder (of order $\delta \varepsilon^2$ in $L^2$). In the second part, we infer that the consistent approximate solution is also \textbf{stable in the sense of the $L^2$ norm}, meaning that the difference in $L^2$ between this approximate solution and the unique weak solution to the original system (\ref{original_system})-(\ref{BC}) with initial data $\cW_{app}(t=0)$ is smaller than a remainder of size $\delta \varepsilon^2$ in $L^2$. Note that, as widely discussed in the rest of the paper, the boundary conditions (\ref{BC}) need to be exactly satisfied by the approximate solution in order to establish the stability inequality (\ref{inequality:stability}). A consistent approximate solution is indeed already given by the sum of first three terms $\Winc+\cW_{BL}+\cW_{II}^1$, while to achieve stability we need to add another corrector $\cW_{corr}$, whose explicit expression is given in Proposition \ref{prop:W1}.
\item We point out that far from the boundary, the leading order term of the approximate solution $\cW_{app}$ is represented by the incident wave packet $\Winc$. 
\item Theorem \ref{Thm:stability} could also be stated in a more general fashion, by taking into account any incident wave packet (not only the ones of the critical regime by Dauxois and Young). The construction of the approximate solution can be done exactly in the same spirit and it should be easier, the critical case being the most difficult one to handle.
\end{itemize}
\end{Rmk}

More details on the terms involved in the expression of the approximate solution $\cW_{app}$ are provided in Lemma \ref{lem:size-W0} and Proposition \ref{prop:W1}.\\
The proof of Theorem \ref{Thm:stability} relies on a quite accurate quantification of all the sizes of the terms and the remainders of the approximate solution in $L^2$ and $L^\infty$. These computations will be provided in details in Section \ref{sec:NL-BL}. A general construction of the profiles and sizes of the boundary layers, depending on their time frequency and tangential wave number, is provided in Section \ref{sec:linear_BL}. The stability inequality is established in Section \ref{sec:stability}.


\section{Linear viscous boundary layers}
\label{sec:linear_BL}

The purpose of this section is to provide a systematic description of boundary layers in the linear case. In particular, we will explain how boundary layer sizes and profiles can be computed, which boundary conditions can be lifted, and we will derive the asymptotic behavior of the boundary layer sizes in different regimes, including the case of critical reflection. In this case, we will also derive the expansions of the linear boundary layer operators, that will allow us to perform the weakly nonlinear analysis of Section \ref{sec:NL-BL}.

As explained in the introduction, the first item in the construction of an approximate solution is an incident wave $(u_i, w_i, b_i)$ (or an incident wave packet, which is an infinite linear superposition of incident waves). This incident wave does not solve the boundary conditions
\be\label{original_BCs}
u=0, \; w=0, \; \partial_yb=0 \quad \text{on} \quad y=0,
\ee
and it is expected that boundary layers take place close to the wall $y=0$ to lift the traces of $(u_i, w_i, \d_y b_i)$. Therefore the purpose of our analysis in the present section is the following: we seek an exact solution of the linear system
\be\ba
\d_t u - \sin \gamma b + \d_x p = \nu \Delta u,\\
\d_t w - \cos \gamma b + \d_y p = \nu \Delta w,\\
\d_t b + \sin \gamma u + \cos \gamma w = \kappa \Delta b,\\
u_x + w_y=0,
\ea 
\label{lin-system}
\ee
endowed with the boundary conditions
\be
\label{cond-BL}
\ba
u_{|y=0}=\mathfrak u\exp(i(kx-\omega t)), \\
 w_{|y=0}=\mathfrak w\exp(i(kx-\omega t)),\\
\d_y b_{|y=0}=\mathfrak b  \exp(i(kx-\omega t)),
\ea
\ee
where $k,\omega\in \R$ are prescribed, and $\mathfrak u, \mathfrak w, \mathfrak b \in \mathbb C$. We refer to the work of G\'erard-Varet and Paul  \cite{GVP} for a presentation of the methodology developed here, and to the paper \cite{DSR} by the last two authors for an application to a case in which several boundary layers with different sizes co-exist. Since equation \eqref{lin-system} is linear and has constant coefficients, it is natural to look for modal solutions, i.e. (linear combinations of) functions of the form 
\be\label{modal}
(u,w,b,p)(t,x,y)=(U,W,B,P)\exp(i(kx-\omega t)-\lambda y),
\ee
with $\Re(\lambda)>0$. When we plug this expression into system \eqref{lin-system}, we obtain
\[
A_{\nu,\kappa}(\omega,k,\lambda)\begin{pmatrix}
U\\W\\B\\P
\end{pmatrix}=0,
\]
where
\be\label{def:matrix-A}
 A_{\nu,\kappa, \omega, k}(\lambda)=\begin{pmatrix}
-i\omega + \nu (k^2 - \lambda^2)  & 0 &-\sin \gamma & ik\\
0 & -i\omega + \nu (k^2 - \lambda^2)  &-\cos \gamma & -\lambda\\
\sin \gamma & \cos \gamma & -i\omega + \kappa (k^2 - \lambda^2)  & 0\\
i k & - \lambda &0&0
\end{pmatrix}.
\ee
As a consequence,  a modal function of the form \eqref{modal} is a non trivial solution of \eqref{lin-system} if an only if $\det  A_{\nu,\kappa,\omega,k}(\lambda)=0$ and $(U,W,B,P)\in \ker  A_{\nu,\kappa,\omega,k}(\lambda)\setminus \{0\}$. The corresponding solution is a boundary layer mode if $\Re(\lambda)\gg 1$, and the size of the boundary layer is then $(\Re(\lambda))^{-1}$. Furthermore, the number of boundary conditions that can be lifted by the boundary layer is equal to the dimension of the vector space
\begin{multline*}
\mathrm{Vect}\ \{ (U,W,-\lambda B)\in \mathbb C^3,\; \exists (P, \lambda) \in \C^2,\; \Re(\lambda)>0\text{ and }\\ \det  A_{\nu,\kappa,\omega,k}(\lambda)=0,\  (U,W,B,P)\in \ker  A_{\nu,\kappa,\omega,k}(\lambda) \},
\end{multline*}
where $(U, W)$ lift the Dirichlet boundary conditions for the velocity field, while $-\lambda B$, which is the amplitude of the derivative in $y$ of expression (\ref{modal}), counts for the no flux condition on the buoyancy $\partial_y b=0$ at $y=0$.
Now, it can be easily checked that $\det A_{\nu,\kappa}(\omega,k,\lambda)$ is a polynomial of degree 6 in $\lambda$, that has therefore 6 complex roots, counted with multiplicity. We now investigate the asymptotic values of these roots depending on the viscosity and the slope criticality. We will prove in particular that we can usually lift three boundary conditions thanks to this linear boundary layer.

\subsection{Sizes of the boundary layers in terms of the viscosity and slope criticality}

A straightforward computation leads to
\begin{eqnarray*}
&&\det A_{\nu,\kappa,\omega,k}(\lambda)\\&=&(k^2 - \lambda^2)\left(- i \omega + \nu (k^2 - \lambda^2)\right)\left(- i \omega + \kappa (k^2 - \lambda^2)\right)\\
&&-2 i k \lambda \sin \gamma \cos \gamma + k^2 \cos^2 \gamma - \lambda^2 \sin^2 \gamma\\
&=&\nu \kappa (k^2-\lambda^2)^3 - i \om (\nu + \kappa) (k^2 - \lambda^2)^2+(\omega^2 - \sin^2 \gamma )\lambda^2\\
&&-2ik\lambda \sin\gamma \cos \gamma+ k^2 (\cos^2 \gamma - \omega^2)\\
&=& - \nu \kappa \lambda^6 + (-i\omega (\kappa + \nu) + 3 \nu \kappa k^2) \lambda^4  \\
& &+ \left[\omega^2 - \sin^2 \gamma  +2i\om(\kappa + \nu) k^2- 3 \nu \kappa k^4 \right]\lambda^2\\
&& -2 i k \sin \gamma \cos \gamma \lambda+k^2(\cos^2 \gamma - \omega^2 - i \omega (\kappa+\nu )k^2 + \nu \kappa k^4). 
\end{eqnarray*}
Note that in the case $\nu=\kappa=0$ and $\lambda=-im$, we retrieve the dispersion relation \eqref{disp-relation}.
In all regimes considered below, we will make the following assumptions: $0<\nu \ll 1$, $0<\kappa \ll 1$, $ |k|\leq c_0$, $c_0^{-1}\leq |\cos^2 \gamma - \omega^2|\leq c_0$ for some fixed constant $c_0$. We introduce a criticality parameter $\zeta:=\omega^2 - \sin^2 \gamma$, and we will investigate both the cases $|\zeta|\ll 1$ and $|\zeta|\gtrsim  1$. To simplify the discussion, 
we will also assume that $\nu $ and $\kappa$ are of the same order of magnitude. However, we will not systematically assume that we are in a critical setting. In other words, we will not assume any relationship between $\nu$ and $\zeta$.
It follows from the above assumptions that when $\omega$ and $k$ are bounded away from zero, the roots of  $\det A_{\nu,\kappa,\omega,k}$ are asymptotically equivalent to the ones of
\begin{align*}
 P_{\nu,\kappa,\omega,k}(\lambda):= - \nu \kappa \lambda^6 - i\omega (\kappa + \nu) \lambda^4 + (\zeta+2i\om(\kappa + \nu) k^2) \lambda^2\\- 2 i k \sin \gamma \cos \gamma \lambda + k^2(\cos^2 \gamma - \omega^2).
\end{align*}
However, for further purposes, we will also need to consider the case $|\omega|\ll 1$, $|k|\ll 1$, which we will do in a separate paragraph.

Let us now explain our general strategy to find the boundary layer sizes. We first look for roots with the smallest possible absolute value. In the case $|k|\geq c_0^{-1}$, since all coefficients of $P_{\nu,\kappa,\omega,k}$ are bounded and the coefficient of order zero is bounded away from zero, the smallest roots will be of size 1, and all coefficients of order $o(1)$ can be neglected. We then look for roots $\lambda$ with a large absolute value, balancing the higher order terms, whose coefficients are smaller, with (some of) the lower order ones. This leads to an Ansatz of the form $|\lambda| \propto \nu^{-\alpha}$ for some $\alpha>0$. We then plug this Ansatz into $P_{\nu,\kappa,\omega,k}$, check that the terms that have not been taken into account can indeed be neglected, and derive an effective equation for $\lambda$. While looking for possible asymptotic behaviors for $\lambda$, it is also useful to keep in mind that the product of all roots is $-(\nu\kappa)^{-1}(\cos^2 \gamma - \omega^2)$.

\subsubsection{Case $|k|\geq c_0>0$, $|\omega|\geq c_0>0$}

\paragraph{Case $|\zeta|\gtrsim 1$ (non-critical reflection)}In this case, it can be easily checked that the roots of  $P_{\nu,\kappa,\omega,k}$ can be classified in the following way:
\begin{itemize}
\item There are two roots of size 1, which therefore do not correspond to boundary layers, and which are approximate roots of the equation
\[
(\omega^2 - \sin^2 \gamma) \lambda^2 - 2 i k \sin \gamma \cos \gamma \lambda + k^2 (\cos^2 \gamma - \omega^2)=0.
\]
These two roots converge towards pure imaginary values in the limit $\nu, \kappa \to 0$ when $|\omega|$ remains below the  critical value $\omega_c=1$. They then correspond to the incident and the reflected wave in the non-critical case (compare the above equation with the dispersion relation \eqref{disp-relation} or with \eqref{eq:m'}). When $|\omega|>1$, the two roots have a non-zero real part: one of them (say $\lambda_1$) has a negative real part (and must therefore be discarded), and the other (say $\lambda_2$) has a positive real part. It corresponds to an exponentially decaying mode, which is however not confined to a boundary layer.

\item There are four roots $\lambda$ such that $|\lambda |=O( \nu^{-1/2})$ (remember that we have assumed that $\nu$ and $\kappa$ are of the same order), which are approximate roots of the equation
\[
-\nu \kappa \lambda^4 - i \omega(\kappa + \nu) \lambda^2 + \zeta =0.
\]
Among these roots,  two have a strictly positive real part, and two have a negative real part.

\end{itemize}

 \paragraph{Case $\nu^{1/3}\ll |\zeta|\ll 1$ (critical reflection with small diffusivity)}
In this case, there are four types of roots:
\begin{itemize}
\item There is one root of size 1, say $\lambda_1$, such that
\[
\lambda_1 \sim - i\frac{k (\cos^2 \gamma - \omega^2)}{2 \sin \gamma \cos \gamma}.
\]
This root corresponds to the incident wave.

\item There is one root of size $\zeta^{-1}$, say $\lambda_2$, which satisfies
\[
\lambda_2\sim \frac{2ik\sin \gamma \cos \gamma}{\zeta}.
\]
Plugging this expression back into the equation $A_{\nu,\kappa,\om,k}(\lambda_2)=0$, it can be proved that $\Re(\lambda_2)$ is negative and of size $\nu/\zeta^4$.

\item There are two roots of size $|\zeta/\nu|^{1/2}$, say $\lambda_3$ and $\lambda_4$, which are approximate solutions of
\[-i\om(\kappa +\nu) \lambda^2 + \zeta=0.
\]
Among these two roots, exactly one (say $\lambda_3$) has a strictly positive real part of size $|\zeta/\nu|^{1/2}$, and the other one has a strictly negative real part.

\item There are two roots of size $\nu^{-1/2}$, which are approximate solutions of 
\[
-\nu\kappa \lambda^2 - i \om (\kappa +\nu)=0.
\]
Among these two roots, exactly one (say $\lambda_5$) has a strictly positive real part of size $\nu^{-1/2}$, and the other one has a strictly negative real part.
\end{itemize}

 \paragraph{Case $\nu^{1/3}\sim  |\zeta|$ (critical reflection in the scaling of Dauxois and Young)}

This case is very similar to the one above. The only difference lies in the fact that the roots $\lambda_2, \lambda_3$ and $\lambda_4$ are now all of size $\nu^{-1/3}$, and are approximate solutions of
\be\label{eq:eigenroots-DY}
-i\om (\kappa + \nu) \lambda^3+ \zeta \lambda - 2ik \sin \gamma \cos \gamma=0.
\ee
One can check that this equation has exactly two roots with positive real part (see Remark \ref{rem:positive-real-part} below). 

 \paragraph{Case $|\zeta| \ll \nu^{1/3}$ (critical reflection with large diffusivity)}

Once again, this case is similar to the two cases above. The three roots $\lambda_2, \lambda_3$ and $\lambda_4$ are  all of size $\nu^{-1/3}$, and are now approximate solutions of
\[
i\om (\kappa + \nu) \lambda^3 - 2ik \sin \gamma \cos \gamma=0.
\]
The conclusion remains the same.

\begin{Rmk}[Number of roots with positive real part]
	In the analysis above, we have counted by hand the number of roots of $\det A_{\nu,\kappa,\omega,k}$ with positive real part in some specific cases. Les us now prove that this number remains constant as the parameter $\zeta$ varies. Assume that the parameters $\nu,\kappa,\gamma$ and $k$ are fixed, with $\nu$ and $\kappa$ small, and consider the roots $\lambda_i$, $i=1,...6$ as functions of the criticality parameter $\zeta$. Then it can be easily checked, using the previous arguments, that the curves $\zeta\mapsto \lambda_i(\zeta)$ do not cross. Indeed, if there were a crossing, it would obviously occur for two eigenvalues  within the same regime, and a quick look at each of the regimes described above ensures that this cannot happen. As a consequence, each function $\zeta\mapsto  \lambda_i(\zeta)$ is continuous (and even $\mathcal C^1$). Furthermore, the curves do not cross the imaginary axis: indeed, if a root $\lambda$ is imaginary, then we deduce that
	\[
	\Im (\det A_{\nu,\kappa,\omega,k}(\lambda))=0=\omega(\kappa + \nu) (\lambda^2 - k^2)^2
	\]
	and therefore $\lambda = \pm |k|$ and $\lambda\in \R$: contradiction. As a consequence, each curve $\zeta\mapsto \lambda_i(\zeta)$ remains in the same complex half-plane. In particular, $\lambda_2, \lambda_3$ and $\lambda_5$ always have a positive real part, and $\lambda_4 $ and $\lambda_6$ always have a negative real part. The eigenvalue $\lambda_1$ corresponds to the incident wave and is therefore discarded.
	
	\label{rem:positive-real-part}
\end{Rmk}

Note that an eigenvector corresponding to a root $\lambda$ is 
\be\label{def:eigenvector}
\begin{pmatrix}
	U_\lambda\\ W_\lambda\\ B_\lambda\\P_\lambda
\end{pmatrix}= \begin{pmatrix}
	1 \\ \frac{ik}{\lambda}\\ \frac{\sin \gamma +\frac{ik}{\lambda}\cos \gamma}{i\om - \kappa (k^2-\lambda^2)}\\
{\frac{1}{ik}\left[ i\om + \nu(\lambda^2 - k^2) + \sin \gamma \frac{\sin \gamma + i k \lambda^{-1} \cos \gamma}{i\om + \kappa (\lambda^2 - k^2)}\right]}
\end{pmatrix}
\ee
We now go back to each of the cases above, and check that the vectors
\[
\left\{ \begin{pmatrix}
U_{\lambda_j}\\ W_{\lambda_j}\\ - \lambda_j B_{\lambda_j}
\end{pmatrix},\quad j\in \{2,3,5\}  \right\}
\]
are linearly independent. As a consequence, {\bf we can always lift three boundary conditions.}

\subsubsection{Case $|k|\ll 1$, $|\omega| \ll 1$}

In the following sections, we will also need to investigate the case when $|k|$ and $|\omega|$ are small. More specifically, we will be interested in the scaling $|k| \lesssim \nu^{1/3}$, $|\omega| \lesssim \nu^{1/3}$. Going back to the expression of $\det A_{\nu,\kappa,\omega,k}$, we obtain the following asymptotic behaviors for the roots:
\begin{itemize}
	\item There are two roots of size $\nu^{1/3}$, say $\lambda_1$ and $\lambda_2$, such that if $|k| \gtrsim \nu^{1/3}$
	\[
	\lambda_j\sim \frac{-ik}{\tan \gamma},\quad j=1,2.
	\]
	Note that one of these roots (say $\lambda_1$) has a negative real part, the other one (say $\lambda_2$) a positive real part. However, the root $\lambda_2$ does not correspond to a boundary layer because $\Re(\lambda_2)\ll 1$. Computations lead to 
	\[
	\Re(\lambda_2)\sim - \Re(\lambda_1) \sim - \frac{(\nu + \kappa) |k|^3}{\sin^2 \gamma}=O(\nu^2)
	\]
	in the scaling $|k| \gtrsim \nu^{1/3}$.
	\item There are four roots of size $O(\nu^{-1/2})$, say $\lambda_i$ for $i\in \{3,\cdots, 6\}$ which  satisfy
	\[
	\lambda_i^4\sim - \frac{\sin^2 \gamma}{\nu\kappa}.
	\]
	Among these, two have a strictly positive real part (say $\lambda_3$ and $\lambda_5$), and two have a strictly negative real part.

\end{itemize}

Once again, we can theoretically lift three boundary conditions. Note however that the vector $X_{\lambda_2} \exp(-\lambda_2 z)$ has a very slow decay, which leads to several complications: first, if $|k|\ll \nu^{1/3}$ and $|\omega| \gtrsim \nu^{1/3}$, the asymptotic given for $\lambda_2$ above may become invalid, and computing an exact rate of decay in all cases becomes quite technical. Furthermore, in the rest of the paper, we will consider wave packets, i.e. superposition of plane waves. This entails that the boundary layer part of the solution corresponding to the rate $\lambda_2$ will behave like
\[
\int_{\R} \frac{1}{\nu^{1/6}} \varphi \left(\frac{k}{\nu^{1/3}}\right) \exp\left( i (k x - \omega t) - i \frac{k}{\tan \gamma} y - c\nu |k|^3 y\right) \:dk ,
\]
where $\varphi \in \mathcal C^\infty_0(\R)$.
The Plancherel theorem entails that the $L^2_{x,y}$ norm of this quantity is bounded by
\[\ba
\frac{1}{\nu^{1/6}}\left( \int_{\R}\int_0^\infty  \varphi^2 \left(\frac{k}{\nu^{1/3}}\right) \exp\left( -2 c \nu |k|^3 y\right) dy \:dk \right)^{1/2}\\ \leq \frac{1}{\nu^{1/6}} \left(  \int_{\R}\varphi^2 \left(\frac{k}{\nu^{1/3}}\right)  \frac{1}{2 c \nu |k|^3 } \, dk\right)^\frac{1}{2}.
\ea\]
It is not clear that the right-hand side is finite, unless we add further assumptions on $\varphi$. {\it Therefore, in the rest of this paper, we will discard the part of the solution that should be handled by the eigenvalue $\lambda_2$ in this regime. } As a consequence, we will only be able to lift two boundary conditions in this case.

\vskip3mm

Let us sum up the results of this paragraph in the following table.
\begin{table}[H]
\caption{Sizes of the boundary layers in different regimes}
\noindent\begin{tabular}{|p{2.4cm}|p{2.7cm} |p{2.7cm}|p{2.7cm}|}\hline \\
$|\zeta| \gtrsim 1$ & $\nu^{\frac{1}{4}} \lesssim| \zeta| \ll 1$ & {$\nu^{\frac{1}{3}}\ll |\zeta |\ll \nu^{\frac{1}{4}}$ }& $|\zeta |\lesssim \nu^{\frac{1}{3}}$\\ \\ \hline
\begin{itemize}[leftmargin=*]
	\item One reflected wave
	\item One BL of size $\nu^{1/2}$  {(lifting 2 conditions)}
\end{itemize}&
\begin{itemize}[leftmargin=*]
	\item One highly oscillating reflected wave (with slow decay)
	\item One BL of size $(\nu/\zeta)^{1/2}$
	\item One BL of size $\nu^{1/2}$
\end{itemize}
&
\begin{itemize}[leftmargin=*]
	\item One BL of size $|\zeta|^4/\nu$
	\item One BL of size $(\nu/\zeta)^{1/2}$
	\item One BL of size $\nu^{1/2}$
\end{itemize}
&
\begin{itemize}[leftmargin=*]
\item One BL of size $\nu^{1/3}$
\item One BL of size $\nu^{1/2}$  {(lifting 2 conditions)}
\end{itemize}
\\
\hline
\end{tabular}
\label{table:sec:2}
\end{table}

\begin{enumerate}
\item Case $|k|\gtrsim 1$:
\begin{itemize}
\item If $|\zeta| \gtrsim 1$, there is one reflected wave (lifting the vertical boundary condition) and one boundary layer of size $\nu^{1/2}$, lifting the two remaining boundary conditions;

\item If $\nu^{1/3}\ll |\zeta| \ll 1$, there are three boundary layers, of sizes $\zeta^4/\nu$, $(\nu/\zeta)^{1/2}$, and $\nu^{1/2}$;

\item If $|\zeta|\lesssim\nu^{1/3}$, there are two boundary layers, of sizes $\nu^{1/3}$ and $\nu^{1/2}$.
\end{itemize}
\item Case $| k |\lesssim \nu^{1/3},\ |\omega| \lesssim \nu^{1/3}$:\\

There is one boundary layer of size $\nu^{1/2}$, lifting two (out of three) boundary conditions.

\end{enumerate}

\subsection{Boundary operators}

We now gather the results and observations from the preceding paragraph. 

$\bullet$ Assume that we are in the regime $|k|\gtrsim 1$. Let $(\mathfrak u, \mathfrak w, \mathfrak b)\in \C^3$, and consider the linear system with unknowns $(a_2, a_3, a_5)$
\be\label{lin-syst-aj}
\ba
a_2 + a_3 + a_5 = \mathfrak u,\\
\frac{ik}{\lambda_2} a_2 + \frac{ik}{\lambda_3} a_3+ \frac{ik}{\lambda_5} a_5=\mathfrak w,\\
\sum_{j\in \{2,3,5\}}-\lambda_j a_j\left[\frac{\sin \gamma +\frac{ik}{\lambda}\cos \gamma}{i\om - \kappa (k^2-\lambda^2)}\right]=\mathfrak b.
\ea
\ee
Define the function
\be\label{def:sol-BL}
\begin{pmatrix}
u\\w\\b
\end{pmatrix}= \sum_{j \in \{2, 3, 5\}} a_j  \exp(i(kx-\omega t)-\lambda_j y) \begin{pmatrix}
U_{\lambda_j}\\ W_{\lambda_j}\ \\ B_{\lambda_j}
\end{pmatrix}.
\ee

Then by construction, we have the following result:

\begin{Lem}[Critical reflection for an oscillating flow]
	Assume that there exists a constant $c_0>0$ such that $c_0^{-1}\leq |k| \leq c_0$,  $c_0^{-1}\leq |\cos^2 \gamma - \omega^2| \leq c_0$. Assume that $|\zeta|\ll \nu^{1/4}$ and that $\nu\ll 1$, $\kappa\ll 1$ with $\nu/\kappa \propto 1$.

	Let $\mathfrak u, \mathfrak w, \mathfrak b \in \C$. Then there exists an exact solution $(u,w,b)$ of the linear system \eqref{lin-system} with:
	\[
	\ba
	u_{|y=0}=\mathfrak u \exp(i(kx-\omega t)),\\ w_{|y=0}=\mathfrak w \exp(i(kx-\omega t)) ,\\ \d_y b_{|y=0}=\mathfrak b  \exp(i(kx-\omega t)).
	\ea 
	\]
	
	This solution is given by \[
	\cB^c_{\omega, k}[\mathfrak u, \mathfrak w, \mathfrak b]:=  \sum_{j \in \{2, 3, 5\}} a_j  \exp(i(kx-\omega t)-\lambda_j y) \begin{pmatrix}
	U_{\lambda_j}\\ W_{\lambda_j}\ \\ B_{\lambda_j}
	\end{pmatrix},
	\]
	 where the coefficients $a_j$ are determined by the system \eqref{lin-syst-aj}. It is exponentially small outside a small boundary layer localized in the vicinity of $y=0$.

\label{lem:boundary-op-critical}
\end{Lem}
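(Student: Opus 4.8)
The plan is to verify directly that the function defined by \eqref{def:sol-BL} with coefficients solving \eqref{lin-syst-aj} does the job. There are three things to check: (a) that each summand solves the linear system \eqref{lin-system}; (b) that the linear system \eqref{lin-syst-aj} for the coefficients $(a_2,a_3,a_5)$ is solvable for arbitrary right-hand side $(\mathfrak u,\mathfrak w,\mathfrak b)$; and (c) that the resulting solution is a genuine boundary layer, i.e. exponentially small away from $y=0$. Items (a) and (c) are essentially bookkeeping given the preceding analysis, so the real content is (b).

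\textbf{Step 1 (modal solutions).} Recall from the discussion around \eqref{def:matrix-A}--\eqref{def:eigenvector} that, for any root $\lambda$ of $\det A_{\nu,\kappa,\omega,k}$ with $\Re(\lambda)>0$, the function $\exp(i(kx-\omega t)-\lambda y)(U_\lambda,W_\lambda,B_\lambda,P_\lambda)$ (with $(U_\lambda,W_\lambda,B_\lambda,P_\lambda)$ the eigenvector \eqref{def:eigenvector}) is an exact solution of \eqref{lin-system}. Since \eqref{lin-system} is linear, any linear combination over the admissible roots is again a solution. In the regime $c_0^{-1}\le|k|\le c_0$, $c_0^{-1}\le|\cos^2\gamma-\omega^2|\le c_0$, $|\zeta|\ll\nu^{1/4}$ (hence in particular $|\zeta|\ll 1$, and $\nu^{1/3}\lesssim|\zeta|$ or $|\zeta|\lesssim\nu^{1/3}$ — either way covered by the last two columns of Table~\ref{table:sec:2}), Remark~\ref{rem:positive-real-part} tells us that exactly three roots $\lambda_2,\lambda_3,\lambda_5$ have strictly positive real part, and moreover (as noted in the cases $\nu^{1/3}\ll|\zeta|\ll\nu^{1/4}$ and $|\zeta|\lesssim\nu^{1/3}$) all three have $\Re(\lambda_j)\to\infty$ as $\nu\to 0$, so each corresponding mode is confined to a layer of thickness $(\Re\lambda_j)^{-1}\to 0$. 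This gives (a) and (c) at once: the finite sum \eqref{def:sol-BL} solves \eqref{lin-system} and decays like $\exp(-\min_j\Re(\lambda_j)\,y)$, which is exponentially small for $y$ bounded away from $0$.

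\textbf{Step 2 (invertibility of the boundary system).} It remains to check that \eqref{lin-syst-aj} is uniquely solvable in $(a_2,a_3,a_5)$ for every $(\mathfrak u,\mathfrak w,\mathfrak b)$. Writing the trace of \eqref{def:sol-BL} and of its $y$-derivative at $y=0$, one reads off that $(a_2,a_3,a_5)$ must satisfy exactly \eqref{lin-syst-aj}, whose matrix is
\[
M=\begin{pmatrix}
1 & 1 & 1\\[1mm]
\dfrac{ik}{\lambda_2} & \dfrac{ik}{\lambda_3} & \dfrac{ik}{\lambda_5}\\[2mm]
-\lambda_2 B_{\lambda_2} & -\lambda_3 B_{\lambda_3} & -\lambda_5 B_{\lambda_5}
\end{pmatrix},\qquad B_{\lambda_j}=\frac{\sin\gamma+\frac{ik}{\lambda_j}\cos\gamma}{i\om-\kappa(k^2-\lambda_j^2)}.
\]
By the remark following Remark~\ref{rem:positive-real-part} — namely that the vectors $(U_{\lambda_j},W_{\lambda_j},-\lambda_j B_{\lambda_j})$, $j\in\{2,3,5\}$, are linearly independent — the matrix $M$ is invertible, and hence \eqref{lin-syst-aj} has a unique solution $(a_2,a_3,a_5)=M^{-1}(\mathfrak u,\mathfrak w,\mathfrak b)$. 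To make this self-contained one can exhibit the leading asymptotics of $M$: using $\lambda_2,\lambda_3$ (resp. $\lambda_5$) of size $\nu^{-1/3}$ (resp. $\nu^{-1/2}$) and the approximate equations they satisfy (e.g. \eqref{eq:eigenroots-DY} for $\lambda_2,\lambda_3,\lambda_4$), one checks that after suitable rescaling of the columns $M$ converges to a Vandermonde-type matrix in the rescaled roots, which are pairwise distinct — this is precisely the observation, recorded in the proof of Remark~\ref{rem:positive-real-part}, that the curves $\zeta\mapsto\lambda_i(\zeta)$ do not cross. Plugging the solution $(a_2,a_3,a_5)$ back into \eqref{def:sol-BL} yields the operator $\cB^c_{\omega,k}[\mathfrak u,\mathfrak w,\mathfrak b]$, and combining with Steps~1 completes the proof.

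The main obstacle is Step~2: one must be sure that the three admissible boundary-layer modes are ``independent enough'' to lift all three boundary data, i.e. that $M$ is not merely invertible but quantitatively so (its inverse controlled in terms of $\nu$), since later the sizes of $\cB^c_{\omega,k}$ applied to small traces will feed into the nonlinear estimates of Section~\ref{sec:NL-BL}. This is exactly where the non-crossing of the eigenvalue curves and the explicit asymptotics $\lambda_2,\lambda_3\sim\nu^{-1/3}(\cdot)$, $\lambda_5\sim\nu^{-1/2}(\cdot)$ from the previous subsection are used; everything else is a direct substitution. (If one only wants the qualitative statement of the lemma as written — existence of an exact solution with the prescribed traces — then the bare invertibility of $M$, already asserted in the text, suffices.)
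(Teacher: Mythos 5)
Your proposal is correct and follows essentially the same route as the paper, which states the lemma ``by construction'': modal solutions attached to the three roots $\lambda_2,\lambda_3,\lambda_5$ with positive real part solve \eqref{lin-system}, the traces impose exactly the system \eqref{lin-syst-aj}, whose solvability is the asserted linear independence of the vectors $(U_{\lambda_j},W_{\lambda_j},-\lambda_j B_{\lambda_j})$, and the hypothesis $|\zeta|\ll\nu^{1/4}$ guarantees $\Re(\lambda_j)\gg 1$ for all three modes (in particular for $\lambda_2$, whose decay rate is of order $\nu/\zeta^4$), so the solution is confined to a boundary layer. Your additional remarks on the quantitative invertibility of the $3\times 3$ trace matrix go slightly beyond what the paper writes down explicitly, but are consistent with its asymptotics (compare the limiting system \eqref{lin-syst-A_i}).
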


We have a similar result in the case of a non-critical reflection; the main difference lies in the fact that the solution is the sum of a boundary layer and of a flow that is not confined to a small region.

\begin{Lem}[Non-critical reflection]
	Assume that there exists a constant $c_0>0$ such that $c_0^{-1}\leq |k| \leq c_0$,  $c_0^{-1}\leq |\cos^2 \gamma - \omega^2| \leq c_0$. Assume that $|\zeta|\gtrsim 1$ and that $\nu\ll 1$, $\kappa\ll 1$ with $\nu/\kappa \propto 1$.

	Let $\mathfrak u, \mathfrak w, \mathfrak b \in \C$. Then there exists an exact solution $(u,w,b)$ of the linear system \eqref{lin-system} with:
	\[
	\ba
	u_{|y=0}=\mathfrak u \exp(i(kx-\omega t)),\\ w_{|y=0}=\mathfrak w \exp(i(kx-\omega t)) ,\\ \d_y b_{|y=0}=\mathfrak b  \exp(i(kx-\omega t)).
	\ea 
	\]
	
	This solution is given by
	\[\ba
	\cB^{nc,RW}_{\omega,k}[\mathfrak u, \mathfrak w, \mathfrak b]:= a_2  \exp(i(kx-\omega t)-\lambda_2 y) \begin{pmatrix}
	U_{\lambda_j}\\ W_{\lambda_j}\ \\ B_{\lambda_j}
	\end{pmatrix},\\
	\cB^{nc,BL}_{\omega,k}[\mathfrak u, \mathfrak w, \mathfrak b]:= \sum_{j \in \{3, 5\}} a_j  \exp(i(kx-\omega t)-\lambda_j y) \begin{pmatrix}
	U_{\lambda_j}\\ W_{\lambda_j}\ \\ B_{\lambda_j}
	\end{pmatrix},
\ea	\]
	 where the coefficients $a_j$ are determined by the system \eqref{lin-syst-aj}. It is composed of a boundary layer of size $\nu^\frac{1}{3}$ and of a (possibly decaying) reflected wave.

	\label{lem:boundary-op-noncritical}
\end{Lem}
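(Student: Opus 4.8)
The result is essentially a corollary of the spectral analysis of the preceding subsections, and the proof amounts to assembling the pieces. The plan is to build the solution as a finite superposition of the modal solutions \eqref{modal} associated to the roots of $\det A_{\nu,\kappa,\omega,k}$ with nonnegative real part, and to fix the free amplitudes by matching the boundary data \eqref{cond-BL}.

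First I would invoke Remark~\ref{rem:positive-real-part}: in the regime under consideration ($c_0^{-1}\le|k|,|\cos^2\gamma-\omega^2|\le c_0$, $\nu,\kappa\ll 1$, $\nu/\kappa\propto 1$), exactly three roots $\lambda_2,\lambda_3,\lambda_5$ of $\det A_{\nu,\kappa,\omega,k}$ have strictly positive real part, the other three being discarded. When moreover $|\zeta|\gtrsim 1$, Table~\ref{table:sec:2} and the analysis leading to it give the sizes: $\lambda_2=O(1)$ (an approximate root of $(\omega^2-\sin^2\gamma)\lambda^2-2ik\sin\gamma\cos\gamma\,\lambda+k^2(\cos^2\gamma-\omega^2)=0$, i.e.\ the reflected wave, with $\Re\lambda_2>0$ but possibly $\Re\lambda_2\to 0^+$ as $\nu,\kappa\to 0$ when $|\omega|<1$), whereas $\Re\lambda_3,\Re\lambda_5\asymp\nu^{-1/2}$. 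Using the asymptotics $\kappa\lambda_2^2\to 0$ and $\kappa\lambda_j^2\to$ explicit constants for $j\in\{3,5\}$, one checks that $i\omega-\kappa(k^2-\lambda_j^2)\ne 0$ for $j\in\{2,3,5\}$, so the eigenvector \eqref{def:eigenvector} is well defined for each of these roots, and each function $a_j\,e^{i(kx-\omega t)-\lambda_j y}(U_{\lambda_j},W_{\lambda_j},B_{\lambda_j},P_{\lambda_j})$ is, by construction of $A_{\nu,\kappa,\omega,k}$, an exact bounded solution of \eqref{lin-system} on $\R^2_+$.

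Next I would impose the boundary conditions \eqref{cond-BL} on the combination $\sum_{j\in\{2,3,5\}}a_j\,e^{i(kx-\omega t)-\lambda_j y}(U_{\lambda_j},W_{\lambda_j},B_{\lambda_j})$: this is exactly the $3\times 3$ system \eqref{lin-syst-aj} for $(a_2,a_3,a_5)$, whose coefficient matrix has columns $(1,\,ik/\lambda_j,\,-\lambda_jB_{\lambda_j})^T$, i.e.\ the three boundary traces of the retained modes. The whole construction thus reduces to the invertibility of this matrix. Expanding the determinant along the first row and using that $\lambda_2=O(1)$ while $\lambda_3,\lambda_5$ are of size $\nu^{-1/2}$ with distinct leading behaviours, one finds that it equals $\tfrac{ik}{\lambda_2}\bigl(\lambda_5B_{\lambda_5}-\lambda_3B_{\lambda_3}\bigr)+O(1)$; inserting the leading asymptotics $-\lambda_jB_{\lambda_j}\sim\rho_j\lambda_j$ (with $\rho_j\ne 0$ and $\rho_3\ne\rho_5$, the latter being checked from the explicit limits of $\kappa\lambda_j^2$) shows that the determinant is of order $\nu^{-1/2}$ with a nonzero leading coefficient. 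Hence $(a_2,a_3,a_5)$ is uniquely — and controllably — determined. One then sets $\cB^{nc,RW}_{\omega,k}[\mathfrak u,\mathfrak w,\mathfrak b]$ to be the $j=2$ term, a reflected wave decaying like $e^{-\Re\lambda_2\,y}$ at a rate that may be arbitrarily small, hence not confined to a boundary layer; and $\cB^{nc,BL}_{\omega,k}[\mathfrak u,\mathfrak w,\mathfrak b]$ to be the sum over $j\in\{3,5\}$, which decays on the scale $(\Re\lambda_j)^{-1}\asymp\nu^{1/2}$ and is therefore a genuine boundary layer. By construction their sum solves \eqref{lin-system}--\eqref{cond-BL} with the prescribed boundary data.

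The main obstacle is the last point: proving that the $3\times 3$ system \eqref{lin-syst-aj} is invertible uniformly in the small parameters, equivalently that the three boundary traces are linearly independent. This requires pinning down the leading-order behaviour of the two boundary-layer roots $\lambda_3,\lambda_5$ and of the scalars $\rho_3,\rho_5$, and checking that the leading coefficient of the determinant does not cancel; the non-critical hypothesis $|\zeta|\gtrsim 1$ — which keeps $\lambda_2$ of size $1$, well separated from the two $O(\nu^{-1/2})$ rates — is precisely what makes this non-degeneracy transparent, in contrast with the critical cases treated in Lemma~\ref{lem:boundary-op-critical}.
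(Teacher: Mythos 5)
Your proposal follows essentially the same route as the paper: the lemma is there obtained ``by construction'' from the modal analysis of $\det A_{\nu,\kappa,\omega,k}$, Remark \ref{rem:positive-real-part} selecting the three roots $\lambda_2,\lambda_3,\lambda_5$ with positive real part, and the solvability of the $3\times 3$ system \eqref{lin-syst-aj}, whose invertibility the paper likewise only asserts (``it can be easily checked that the vectors are linearly independent''). One small caution on your determinant argument: the nonvanishing of the leading coefficient requires $\lambda_3 B_{\lambda_3}\neq \lambda_5 B_{\lambda_5}$, which does not follow from $\rho_3\neq\rho_5$ alone (it is equivalent to $\kappa\lambda_3\lambda_5\neq i\omega$, which does hold in this regime), but this is a finer point than the paper itself records.
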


$\bullet$ We now address the case of a  non-oscillating flow, i.e. $|k| \lesssim \nu^{1/3}$, $|\omega|\lesssim \nu^{1/3}$. As stressed in the previous subsection, in this case we only lift two boundary conditions. Looking at system \eqref{lin-syst-aj} and recalling that $\lambda_2= O(k)=O(\nu^{1/3})$, $\lambda_3, \lambda_5 =O(\nu^{-1/2})$, we see that the best approximation of the actual linear solution is to choose $a_2, a_3$ and $a_5$ so that $a_2=0$ and $a_3$ and $a_5$ satisfy
\be\label{lin-system-degenerate}
\ba
 a_3 + a_5 = \mathfrak u,\\
\sum_{j\in \{3,5\}}-\lambda_j a_j\left[\frac{\sin \gamma +\frac{ik}{\lambda}\cos \gamma}{i\om - \kappa (k^2-\lambda^2)}\right]=\mathfrak b.
\ea
\ee
We obtain the following result:

\begin{Lem}[Non-critical reflection for a  non-oscillating flow]
	 Assume that $\nu\ll 1$, $\kappa\ll 1$ with $\nu/\kappa \propto 1$, and that  $|k| \lesssim \nu^{1/3}$, $|\omega| \lesssim \nu^{1/3}$.

	Let $ \mathfrak u, \mathfrak b\in \C$. Then there exists an exact solution $(u,w,b)$ of the linear system \eqref{lin-system} such that
	\[
\ba
u_{|y=0}=\mathfrak u \exp(i(kx-\omega t)),\\ \d_y b_{|y=0}=\mathfrak b  \exp(i(kx-\omega t)).
\ea 
\]
	
	This solution is given by 
	\[
	\cB_{\omega, k}^{no}[\mathfrak u, \mathfrak w, \mathfrak b]:=\sum_{j \in \{3, 5\}} a_j  \exp(i(kx-\omega t)-\lambda_j y) \begin{pmatrix}
	U_{\lambda_j}\\ W_{\lambda_j}\ \\ B_{\lambda_j},
	\end{pmatrix}
	\]
	where the coefficients $a_3,a_5$ are determined by system \eqref{lin-system-degenerate}. Furthermore, the remaining trace satisfies
	\[
	w_{|y=0}= \exp(i(kx-\omega t)) \sum_{j \in \{3, 5\}} \frac{ik}{\lambda_j} a_j .
	\]

	\label{lem:boundary-op-non-osc}
\end{Lem}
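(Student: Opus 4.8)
The plan is to mimic the construction already used for the oscillating case (Lemma \ref{lem:boundary-op-critical}), but to keep only the two ``fast'' boundary layer modes $\lambda_3,\lambda_5$, discarding the third mode $\lambda_2$ which in the non-oscillating regime $|k|\lesssim \nu^{1/3}$, $|\omega|\lesssim \nu^{1/3}$ is \emph{not} localized in a thin layer (its real part is $O(\nu^2)$, so it does not produce a genuine boundary layer). Concretely, first I would invoke the root analysis of Section \ref{sec:linear_BL}: in this regime $\det A_{\nu,\kappa,\omega,k}(\lambda)$ has exactly two roots $\lambda_3,\lambda_5$ of size $\nu^{-1/2}$ with strictly positive real part (the roots with $\lambda^4\sim -\sin^2\gamma/(\nu\kappa)$), together with one slow root $\lambda_2$ with $\Re(\lambda_2)>0$ but $\Re(\lambda_2)\ll 1$. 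We retain $\lambda_3,\lambda_5$ only, and set $a_2=0$.

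Next I would write the candidate solution as in \eqref{def:sol-BL}, namely
\[
\begin{pmatrix} u\\w\\b\end{pmatrix}
= \sum_{j\in\{3,5\}} a_j\,\exp\big(i(kx-\omega t)-\lambda_j y\big)\begin{pmatrix}U_{\lambda_j}\\W_{\lambda_j}\\B_{\lambda_j}\end{pmatrix},
\]
with $U_{\lambda_j},W_{\lambda_j},B_{\lambda_j}$ given by \eqref{def:eigenvector}. Because each $(U_{\lambda_j},W_{\lambda_j},B_{\lambda_j},P_{\lambda_j})$ lies in $\ker A_{\nu,\kappa,\omega,k}(\lambda_j)$, each summand exactly solves the linear system \eqref{lin-system} (including incompressibility, which is the fourth row of $A$), and hence so does the finite sum, for \emph{any} choice of $a_3,a_5$. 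So the only thing to arrange is the boundary data. Evaluating at $y=0$ gives traces $u_{|y=0}=(a_3+a_5)e^{i(kx-\omega t)}$ and $\d_y b_{|y=0}=\big(\sum_{j}-\lambda_j a_j B_{\lambda_j}\big)e^{i(kx-\omega t)}$; imposing these to equal $\mathfrak u e^{i(kx-\omega t)}$ and $\mathfrak b e^{i(kx-\omega t)}$ is precisely system \eqref{lin-system-degenerate}. I would then check this $2\times 2$ linear system for $(a_3,a_5)$ is invertible: its determinant is, up to the nonzero common factor coming from the buoyancy denominators, proportional to $\lambda_3 B_{\lambda_3}-\lambda_5 B_{\lambda_5}\ne 0$, which follows from the fact that $\lambda_3\ne\lambda_5$ and that the map $\lambda\mapsto \lambda B_\lambda$ is not locally constant on these fast roots (in the regime $\lambda^2\sim \pm i\sqrt{-\sin^2\gamma/(\nu\kappa)}$ the two values are genuinely distinct, e.g.\ because $\lambda_3 B_{\lambda_3}\sim \lambda_3\sin\gamma/(i\omega+\kappa\lambda_3^2)$ and $\lambda_3^2\ne\lambda_5^2$). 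This gives unique $a_3,a_5$, hence the solution $\cB_{\omega,k}^{no}[\mathfrak u,\mathfrak w,\mathfrak b]$, whose third argument $\mathfrak w$ is vacuous here. Finally, $\Re(\lambda_j)\sim c\,\nu^{-1/2}\gg 1$ yields exponential decay on scale $\nu^{1/2}$, i.e.\ the solution is confined to a boundary layer, and substituting $y=0$ into the $w$-component gives the stated remaining trace $w_{|y=0}=e^{i(kx-\omega t)}\sum_{j\in\{3,5\}}(ik/\lambda_j)a_j$, which is $O(k\,\nu^{1/2})$ small because $|k|\lesssim\nu^{1/3}$ and $|\lambda_j|\sim\nu^{-1/2}$.

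The main obstacle I anticipate is the solvability of \eqref{lin-system-degenerate}, i.e.\ showing the relevant $2\times 2$ determinant stays bounded away from zero uniformly as $\nu,\kappa\to 0$ and as $(k,\omega)$ range over the small region $|k|,|\omega|\lesssim\nu^{1/3}$; this requires a careful asymptotic expansion of $\lambda_3 B_{\lambda_3}-\lambda_5 B_{\lambda_5}$ using the precise form of the fast roots, together with uniform control on the denominator $i\omega-\kappa(k^2-\lambda^2)$ appearing in $B_\lambda$, which in this regime is $\sim -\kappa\lambda^2\sim \mp i\sin\gamma$ and hence safely nonzero. Everything else — that each modal term solves \eqref{lin-system} exactly, that the sum does too by linearity, that the $y=0$ traces match \eqref{lin-system-degenerate}, and the exponential localization — is a direct consequence of the construction of $A_{\nu,\kappa,\omega,k}$, its kernel vectors \eqref{def:eigenvector}, and the root count recorded in Remark \ref{rem:positive-real-part} and the table above.
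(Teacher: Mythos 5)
Your proposal is correct and follows essentially the same route as the paper, which proves the lemma purely by construction: each modal term $\exp(i(kx-\omega t)-\lambda_j y)$ with $(U_{\lambda_j},W_{\lambda_j},B_{\lambda_j},P_{\lambda_j})\in\ker A_{\nu,\kappa,\omega,k}(\lambda_j)$ solves \eqref{lin-system} exactly, the slow root $\lambda_2$ is discarded (i.e.\ $a_2=0$), the traces of $u$ and $\d_y b$ give the $2\times 2$ system \eqref{lin-system-degenerate}, and the leftover $w$-trace is read off from $W_{\lambda_j}=ik/\lambda_j$. Your added check that the $2\times2$ determinant is nondegenerate (via $\lambda_3 B_{\lambda_3}\neq\lambda_5 B_{\lambda_5}$ for the fast roots $\lambda^4\sim-\sin^2\gamma/(\nu\kappa)$) is a detail the paper leaves implicit, not a different argument.
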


\subsection{Superposition of  boundary layers in the Dauxois-Young scaling}

In the rest of this section, we focus on the case of a critical reflection in the scaling of Dauxois and Young \cite{DY}, i.e. we assume that $\nu^{1/3}\sim  |\zeta|$. We recall that in this case, there is a boundary layer of size $\nu^{1/3}$, which is described in \cite{DY}, and another one much smaller, of size $\nu^{1/2}$, which is absent from \cite{DY}. We now describe the interplay between these two boundary layers. More precisely, we compute asymptotic values for the coefficients $a_j$ defined in \eqref{lin-syst-aj}. We also switch to the notation that will be used in the rest of this paper, and that was introduced in \cite{DY}: more precisely, we take $\nu=\nu_0 \eps^6$, $\kappa=\kappa_0 \eps^6$, where $\eps\ll 1$ is a small parameter, and $\nu_0, \kappa_0>0$ are constant and independent of $\eps$. We therefore refer to the boundary layer of size $\nu^{1/3}$ (resp. $\nu^{1/2}$) as the $\eps^2$ (resp. $\eps^3$) boundary layer.

\begin{Def}
	In the rest of this section, we denote by $\bar \Lambda_2, \bar \Lambda_3, \bar \Lambda_5$ the complex numbers with positive real parts defined by
	\[
	\bar 	\Lambda_j:=\lim_{\nu\to 0}\lambda_j \nu^{1/3}= \lim_{\eps\to 0}\lambda_j \eps^2 \text{     for  } j=2,3,\quad \bar \Lambda_5:= \lim_{\nu\to 0}\lambda_5 \nu^{1/2}=\lim_{\eps\to 0}\lambda_5 \eps^3.
	\]
	
\end{Def}

A straight-forward linear analysis shows that there exists $\bar A_2, \bar A_3, \bar A_5 \in \C\setminus \{0\}$ such that
\[
a_2 \sim \frac{\bar A_2}{\eps^2},\quad a_3 \sim \frac{ \bar A_3}{\eps^2},\quad a_5 \sim \frac{\bar A_5}{\eps},
\]
where the coefficients $\bar A_2, \bar A_3,  \bar A_5$ satisfy the linear system
\be\label{lin-syst-A_i}
\ba
\bar A_2 +\bar A_3 =0,\\
\frac{ik}{\bar\Lambda_2}\bar A_2 + \frac{ik}{\bar\Lambda_3}\bar A_3=\mathfrak w,\\
\frac{1}{i\om} (\bar\Lambda_2 \bar A_2 +\bar\Lambda_3\bar A_3 ) + \frac{1}{i\om + c\bar\Lambda_5^2} \bar\Lambda_5\bar A_5=0,
\ea
\ee
where $c:=\kappa_0/\nu_0>0$.
As a consequence, if we isolate the $\eps^2$ boundary layer part, namely
\[
\cU_{BL,\eps^2}:= \sum_{j\in \{2,3\}} a_j X_{\lambda_j} \exp(i(kx-\omega t) - \lambda_j y),
\]
then
\[\ba
\| u_{BL,\eps^2}\|_{\infty}= O(\eps^{-2} \|(\mathfrak u, \mathfrak w, \mathfrak b)\|_{\infty} ),\\
\| w_{BL,\eps^2}\|_{\infty}= O(\|(\mathfrak u, \mathfrak w, \mathfrak b)\|_{\infty} ),\\
\| b_{BL,\eps^2}\|_{\infty}= O(\eps^{-2}  \|(\mathfrak u, \mathfrak w, \mathfrak b)\|_{\infty} )
\ea
\]
while
\be\label{traces}\ba
u_{BL,\eps^2|y=0}=o(\eps^{-2} ),\\
 w_{BL,\eps^2|y=0}= \mathfrak w \exp(i(kx-\om t)) + O(\eps^{2} ),\\
b_{BL,\eps^2|y=0}=o(\eps^{-2} ).\ea
\ee

\begin{Rmk}
	Note that the boundary condition on the normal component of the velocity is mostly handled by the boundary layer of size $\eps^2$, which is not the case for the tangential component. There is indeed an intricate relationship between the $\eps^2$ and the $\eps^3$ boundary layers, and the boundary term $\mathfrak u$ is in fact distributed among the two boundary layers. 
	
\end{Rmk}

An (almost) exact solution to the linear system (\ref{lin-system})-(\ref{original_BCs}) in the case $c_0^{-1} \leq |k| \leq c_0$, $|\zeta| \ll \eps^{3/2}$ is provided by the sum of incident wave and the boundary layers, i.e.

\be\label{Linear_solution_zero_trace}
\cU^0= \mathcal{U}_{inc}+ \mathcal U_{BL}+c.c.
\ee
where the incident wave is $\mathcal{U}_{\text{inc}}=(\mathfrak u , \, \mathfrak w, \, \mathfrak b) \exp(i(kx+my-\omega t))$, and the boundary layer is  
\[
 \mathcal U_{BL}=-\cB_{\omega, k}^c (\mathfrak u , \, \mathfrak w, \, im \mathfrak b) .
\]
Note that the boundary conditions are satisfied without any error, since the trace of the incident wave is balanced by the boundary layer. The only source of error comes from the action of the diffusion operator on the incident wave.

\subsection{Definition and size of the linear solution}

Since  we wish to work with finite energy solutions,  we will  consider wave packets rather than plane waves, i.e. functions of the form
\be\label{incident_wave_packet}
\Winc:=\int_{\R^2} \widehat A(k,m)  X_{k,m}\exp(i(kx-\omega_{k,m} t + my))\:dk\:dm,
\ee
where $\widehat{A} \in \mathcal S(\R^2)$, and $X_{k,m}:=\begin{pmatrix}
1\\ - \frac{k}{m}\\ \frac{i(k\cos \gamma - m \sin \gamma)}{m \omega_{k,m}}
\end{pmatrix}$.
The boundary layer is then
\be\label{BL-wave_packet}
 \mathcal W_{BL}^0=- \int_{\R^2} \widehat A(k,m) \cB_{\omega,k}  \left(1,- \frac{k}{m}, \frac{-(k\cos \gamma - m \sin \gamma)}{\omega_{k,m}}\right)\:dk\:dm ,
\ee
where $\cB_{\omega,k} $ can be $\cB_{\omega,k}^c$, $\cB_{\omega,k}^{nc,BL}$, or $\cB_{\omega,k}^{no}$, depending on the regime under consideration.  

In order that $\Winc$ is an incident wave packet, we also need that the group velocity is oriented towards the slope. Since the time frequency is given by
$$\omega_{k, m}= \pm {{k\cos \gamma - m \sin \gamma} \over {\sqrt{k^2+m^2}}},$$
then the group velocity reads
$$v_g=\nabla_{k,m} \omega_{k,m}=\pm \frac{m \cos \gamma + k \sin \gamma}{(k^2+m^2)^\frac{3}{2}}\begin{pmatrix}
m \\
-k  
\end{pmatrix}.$$
We thus choose the sign of $\omega_{k,m}$ in such a way that
$$v_g \cdot e_y = \mp \frac{k(m \cos \gamma + k \sin \gamma)}{(k^2+m^2)^\frac{3}{2}}<0,$$
where $e_y$ is the unit vector along the normal axis $y$.\\
In order to focus on the critical case, we will choose functions $\widehat{A}$ which are supported close to $\pm (k_0, m_0)$, where $(k_0, m_0)$ are such that
\[
\frac{(k_0 \cos \gamma - m_0 \sin \gamma)^2}{k_0^2 + m_0^2}= \sin^2 \gamma.
\]
More precisely, we will take
\be\label{def:A}
\widehat A(k,m):= \frac{1}{\eps^2} \chi\left(\frac{k-k_0}{\eps^2}\right)\chi\left( \frac{m-m_0}{\eps^2}\right) +  \frac{1}{\eps^2} \chi\left(\frac{k+k_0}{\eps^2}\right)\chi\left( \frac{m+m_0}{\eps^2}\right) ,
\ee
where $\chi\in \mathcal C^\infty_0(\R)$ is a real valued function. Note that in this case, for all $(k,m)\in \mathrm{Supp } \widehat A$, we have $\zeta_{k,m}:=\omega_{k,m}^2 - \sin^2 \gamma= O(\eps^2)$.

The previous analysis shows that we can decompose $ \mathcal W_{BL}^0$ as $\WBLII^0 + \WBLIII^0$, where
\be \label{def:linear-BL}
\ba
\WBLII^0:=- \int_{\R^2} \widehat A(k,m)\sum_{j \in \{2, 3\}} a_j \exp \left( i(kx-\omega_{k,m} t) - \lambda_j y\right) \begin{pmatrix}
U_{\lambda_j}\\ W_{\lambda_j}\\ B_{\lambda_j}\end{pmatrix} \:dk\:dm ,\\
\WBLIII^0:=- \int_{\R^2} \widehat A(k,m) a_5 \exp \left( i(kx-\omega_{k,m} t) - \lambda_5 y\right) \begin{pmatrix}
U_{\lambda_5}\\ W_{\lambda_5}\\ B_{\lambda_5}\end{pmatrix}\:dk\:dm .
\ea
\ee

{\it In the rest of the paper, we set}
\be\label{def:W0}
\cW^0:= \Winc + \WBLII^0 + \WBLIII^0.\ee

We can then estimate the sizes of $\Winc, \WBLII^0$ and $\WBLIII^0$ in $L^\infty$ and $L^2$:

\begin{lem}\label{lem:size-W0}
Assume that $\widehat{A}$ is given by \eqref{def:A}, where $\chi\in \mathcal C^\infty_0(\R)$ is a real valued function. 
We then have the following properties, for all $t\in \R$:
\begin{itemize}
\item Size of the incident wave packet:
\[
\|\Winc\|_{L^2(R^2_+)}= O(1),\quad \|\Winc\|_{L^\infty(R^2_+)}= O(\eps^2);
\]

\item Size of the $\eps^2$ boundary layer:
\[
\|\WBLII^0\|_{L^2(R^2_+)}= O(1),\quad \|\WBLII^0\|_{L^\infty(R^2_+)}= O(1);
\]
 { Note that the normal component of the velocity  is much smaller (by a factor $O(\eps^2)$).}

\item Size of the $\eps^3$ boundary layer:
\[
\|\WBLIII^0\|_{L^2(R^2_+)}= O(\eps^{3/2}),\quad \|\WBLIII^0\|_{L^\infty(R^2_+)}= O(\eps).
\]
 { Note that the normal component of the velocity  is much smaller (by a factor $O(\eps^3)$).}

\end{itemize}

As a consequence, $\cW^0$ is of size $O(1)$ in $L^\infty$ and $L^2$.

\end{lem}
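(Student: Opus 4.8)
The plan is to estimate each of the three pieces $\Winc$, $\WBLII^0$, $\WBLIII^0$ separately, in each case exploiting the scaling structure of the amplitude $\widehat A$ given in \eqref{def:A} together with the sizes of the boundary-layer coefficients $a_j$ and thicknesses $\lambda_j^{-1}$ recorded in the previous subsection.

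\emph{Step 1: the incident wave packet.} Since $\Winc = \int_{\R^2} \widehat A(k,m) X_{k,m} \exp(i(kx - \omega_{k,m}t + my))\,dk\,dm$ and the phase is real, for the $L^\infty$ bound I simply put absolute values inside the integral: $\|\Winc\|_{L^\infty} \le \int |\widehat A(k,m)|\,|X_{k,m}|\,dk\,dm$. On the support of $\widehat A$ the vector $X_{k,m}$ is bounded (staying away from $m=0$ since $(k_0,m_0)$ is fixed with $m_0\ne 0$), and $\int |\widehat A| = O(\eps^{-2}\cdot \eps^2 \cdot \eps^2) = O(\eps^2)$ because the two bumps each have $L^1$ mass $\eps^{-2}\cdot (\eps^2)^2 = \eps^2$. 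This gives $\|\Winc\|_{L^\infty} = O(\eps^2)$. For the $L^2$ bound I use Plancherel in $(x,y)$ after noting that $\Winc$ is (the real part of) a function whose spatial Fourier transform at fixed $t$ is $\widehat A(k,m) X_{k,m} e^{-i\omega_{k,m}t}$, so $\|\Winc\|_{L^2(\R^2)}^2 \lesssim \int |\widehat A(k,m)|^2 |X_{k,m}|^2\,dk\,dm = O(\eps^{-4}\cdot \eps^2) = O(\eps^{-2})$; but here we are on the half-space $\R^2_+$ and the true object is a wave packet localized in $y$ only through the group velocity, so one must be a little careful — the honest statement is that $\|\widehat A\|_{L^2(\R^2)}^2 = \eps^{-4}\cdot \eps^2\cdot 2 = O(\eps^{-2})$, which would give $O(\eps^{-1})$, not $O(1)$. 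The resolution is that the correct normalization intended here makes $\|\widehat A\|_{L^2}=O(1)$: indeed with the prefactor $\eps^{-2}$ and two variables scaled by $\eps^2$ one has $\int |\eps^{-2}\chi((k-k_0)/\eps^2)\chi((m-m_0)/\eps^2)|^2\,dk\,dm = \eps^{-4}\cdot\eps^2\cdot\eps^2\|\chi\|_{L^2}^4 = \|\chi\|_{L^2}^4 = O(1)$, so $\|\Winc\|_{L^2} = O(1)$. I would spell this computation out.

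\emph{Step 2: the two boundary layers.} For $\WBLII^0$, the representation \eqref{def:linear-BL} is an integral over $(k,m)$ of modes $a_j \exp(i(kx-\omega t) - \lambda_j y)(U_{\lambda_j},W_{\lambda_j},B_{\lambda_j})$ with $j\in\{2,3\}$, and on the support of $\widehat A$ one has $\zeta_{k,m} = O(\eps^2)$, so we are in the Dauxois–Young regime: $\lambda_2,\lambda_3$ are of size $\eps^{-2}$ with $\Re\lambda_j\gtrsim\eps^{-2}$, and from \eqref{lin-syst-A_i} the coefficients satisfy $a_2\sim\bar A_2/\eps^2$, $a_3\sim\bar A_3/\eps^2$ with the data $(\mathfrak u,\mathfrak w,\mathfrak b) = (1,-k/m,-(k\cos\gamma-m\sin\gamma)/\omega_{k,m})$ of size $O(1)$. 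The eigenvector components are: $U_{\lambda_j}=1$, $W_{\lambda_j}=ik/\lambda_j = O(\eps^2)$, $B_{\lambda_j} = (\sin\gamma + (ik/\lambda_j)\cos\gamma)/(i\omega - \kappa(k^2-\lambda_j^2)) = O(1)$ (since $\kappa\lambda_j^2 = \kappa_0\eps^6\cdot\eps^{-4}=O(\eps^2)$). For the $L^\infty$ bound I again put absolute values inside, using $|e^{-\lambda_j y}| = e^{-\Re(\lambda_j)y}\le 1$: this gives $\|u_{BL,\eps^2}\|_\infty \lesssim \int|\widehat A|\,|a_j|\,dk\,dm = O(\eps^2)\cdot O(\eps^{-2}) = O(1)$, similarly $\|b_{BL,\eps^2}\|_\infty = O(1)$ and $\|w_{BL,\eps^2}\|_\infty = O(\eps^2)$ thanks to the extra factor $W_{\lambda_j}=O(\eps^2)$. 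For the $L^2$ bound the new ingredient is the $y$-integration: $\int_0^\infty |e^{-\lambda_j y}|^2\,dy = (2\Re\lambda_j)^{-1} = O(\eps^2)$, so Plancherel in $x$ plus this $y$-integral gives $\|\WBLII^0\|_{L^2}^2 \lesssim \int |\widehat A|^2 |a_j|^2 (\Re\lambda_j)^{-1}\,dk\,dm = O(1)\cdot O(\eps^{-4})\cdot O(\eps^2) = O(\eps^{-2})$ — and once more, with the $\|\widehat A\|_{L^2}^2 = O(1)$ normalization (i.e. the integral of $|\widehat A|^2$ is $O(1)$, not $O(\eps^{-2})$), we instead get $\|\WBLII^0\|_{L^2}^2 = O(1)\cdot\eps^{-4}\cdot\eps^2\cdot\eps^2 = O(1)$ once the amplitude-squared integral $\int|\widehat A|^2 = \eps^{-4}\cdot\eps^2\cdot\eps^2 = O(1)$ is used. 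I would carry out the bookkeeping carefully and conclude $\|\WBLII^0\|_{L^2} = O(1)$, $\|\WBLII^0\|_{L^\infty} = O(1)$, with the normal component smaller by $\eps^2$. For $\WBLIII^0$ the analysis is identical with $j=5$: now $a_5\sim\bar A_5/\eps$ (one power of $\eps$ less than $a_2,a_3$), $\lambda_5$ is of size $\eps^{-3}$ with $\Re\lambda_5\gtrsim\eps^{-3}$, $W_{\lambda_5} = ik/\lambda_5 = O(\eps^3)$, and $B_{\lambda_5} = O(1)$. The $L^\infty$ bound gives $\|\WBLIII^0\|_\infty \lesssim \int|\widehat A|\,|a_5|\,dk\,dm = O(\eps^2)\cdot O(\eps^{-1}) = O(\eps)$, and the $L^2$ bound picks up $(\Re\lambda_5)^{-1} = O(\eps^3)$ from the $y$-integral, giving $\|\WBLIII^0\|_{L^2}^2 \lesssim \int|\widehat A|^2|a_5|^2(\Re\lambda_5)^{-1}\,dk\,dm = O(1)\cdot O(\eps^{-2})\cdot O(\eps^3) = O(\eps^3)$, i.e. $\|\WBLIII^0\|_{L^2} = O(\eps^{3/2})$, with the normal component smaller by $\eps^3$.

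\emph{Step 3: conclusion.} Adding the three bounds, $\cW^0 = \Winc + \WBLII^0 + \WBLIII^0$ has $L^\infty$ norm $O(\eps^2) + O(1) + O(\eps) = O(1)$ and $L^2$ norm $O(1) + O(1) + O(\eps^{3/2}) = O(1)$, which is the claimed statement. The time-independence of all these bounds is clear since $t$ enters only through the unimodular factor $e^{-i\omega_{k,m}t}$.

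\textbf{Main obstacle.} The only genuinely delicate point is \emph{keeping track of the powers of $\eps$} coming from three competing sources simultaneously: the $\eps^{-2}$ prefactor and $\eps^2$ scaling of the two frequency variables in $\widehat A$; the $\eps^{-2}$ (resp. $\eps^{-1}$) blow-up of the coefficients $a_j$; and the $\eps^2$ (resp. $\eps^3$) shrinking of the boundary-layer thickness $(\Re\lambda_j)^{-1}$ that enters the $y$-integral in the $L^2$ norm. A sign error of one power of $\eps$ in any of these — in particular confusing $\|\widehat A\|_{L^1}$ with $\|\widehat A\|_{L^2}$ and forgetting that the intended normalization makes $\|\widehat A\|_{L^2} = O(1)$ — changes the final answer. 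I would therefore organize the argument around two clean lemmas: (a) $\|\widehat A\|_{L^1(\R^2)} = O(\eps^2)$ and $\|\widehat A\|_{L^2(\R^2)} = O(1)$; and (b) uniform-in-$(k,m)$ bounds on the support of $\widehat A$ for $|a_j|$, $|\lambda_j|$, $\Re\lambda_j$, and the eigenvector components, extracted from \eqref{def:eigenvector} and \eqref{lin-syst-A_i} together with the asymptotics $\bar\Lambda_j, \bar A_j \in \C\setminus\{0\}$. Everything else is then a mechanical application of the triangle inequality (for $L^\infty$) and Plancherel-in-$x$ followed by an explicit $y$-integration (for $L^2$), with no further subtlety. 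A secondary, purely expository point is to state clearly that the wave packet $\Winc$, though written as an integral over $\R^2$, is being viewed as a function on $\R^2_+$ by restriction, so that no boundary terms arise in the Plancherel step beyond the trivial inequality $\|f\|_{L^2(\R^2_+)} \le \|f\|_{L^2(\R^2)}$.
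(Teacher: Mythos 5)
Your overall strategy is the same as the paper's: triangle inequality (putting absolute values under the integral) for the $L^\infty$ bounds, Plancherel plus an explicit $y$-integration for the $L^2$ bounds, all driven by the two scaling facts $\|\widehat A\|_{L^1(\R^2)}=O(\eps^2)$, $\|\widehat A\|_{L^2(\R^2)}=O(1)$ and the sizes $|a_2|,|a_3|=O(\eps^{-2})$, $|a_5|=O(\eps^{-1})$, $\Re\lambda_{2,3}\gtrsim \eps^{-2}$, $\Re\lambda_5\gtrsim\eps^{-3}$. The final estimates you state all agree with the paper, and your treatment of $\Winc$ is fine (after the self-corrected false start on $\|\widehat A\|_{L^2}$): there the $y$-dependence $e^{imy}$ is a genuine Fourier mode, so two-dimensional Plancherel applies and $\|f\|_{L^2(\R^2_+)}\le\|f\|_{L^2(\R^2)}$ handles the half-space.

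There is, however, a genuine gap in your $L^2$ estimates for the two boundary layers. For $\WBLII^0$ and $\WBLIII^0$ the $y$-dependence is $e^{-\lambda_j(k,m)y}$, which is \emph{not} a Fourier mode in $y$: Plancherel is available only in the $x$ (i.e.\ $k$) variable, and the profiles attached to different values of $m$ all decay at essentially the same rate, so there is no square-summation over $m$. Your master bound $\|\WBLII^0\|_{L^2}^2 \lesssim \int |\widehat A|^2 |a_j|^2 (\Re\lambda_j)^{-1}\,dk\,dm$ therefore does not follow from Plancherel, and taken literally it only gives $O(\eps^{-2})$ for $\WBLII^0$ (and $O(\eps)$ for $\WBLIII^0$ squared), not the claimed $O(1)$ and $O(\eps^3)$; the extra factor $\eps^2$ appearing in your arithmetic ``$O(1)\cdot\eps^{-4}\cdot\eps^2\cdot\eps^2$'' is never derived. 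The missing ingredient is exactly the $O(\eps^2)$ measure of the $m$-support of $\widehat A$. The paper supplies it by integrating in $m$ first with absolute values, i.e.\ $\bigl|\int_m \widehat A\, a_j e^{-\lambda_j y}\,dm\bigr| \lesssim \eps^{-4}\cdot\eps^2\,\chi\bigl((k\pm k_0)/\eps^2\bigr)e^{-cy/\eps^2}$, then using Plancherel in $k$ alone and integrating the resulting $\eps^{-2}e^{-2cy/\eps^2}$ in $y$; equivalently you may apply Cauchy--Schwarz in $m$, which produces the support measure and yields $\|\WBLII^0\|_{L^2}^2 \lesssim \eps^2 \int |\widehat A|^2 |a_j|^2 (\Re\lambda_j)^{-1}\,dk\,dm = O(\eps^2\cdot\eps^{-4}\cdot\eps^2)=O(1)$ and $\|\WBLIII^0\|_{L^2}^2 \lesssim \eps^2\cdot\eps^{-2}\cdot\eps^3 = O(\eps^3)$. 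With this correction your bookkeeping closes and the argument coincides with the paper's proof.
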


\begin{proof}
$\bullet$ We start with the incident wave packet. Note that definition \eqref{incident_wave_packet} allows us to extend the definition of $\Winc$ to the whole space $\R^2$. According to the Plancherel formula, we have
\[
\| \Winc\|_{L^2(\R^2_+)}\leq \| \Winc\|_{L^2(\R^2)}= \| \widehat A(k,m) X_{k,m}\|_{L^2(\R^2_{k,m})} \lesssim \| \widehat A\|_{L^2} \lesssim 1
\]
by our choice of scaling. Furthermore,
\[
\| \Winc\|_{L^\infty} \lesssim \| \widehat{A} \|_{L^1(\R^2)} \lesssim \eps^2. 
\]

$\bullet$ We now turn towards the estimates on $\WBLII^0$. We start with the $L^\infty$ estimate, which is simpler. We recall that $a_2, a_3= O(\eps^{-2})$, and that $ \begin{pmatrix}
U_{\lambda_j}\\ W_{\lambda_j}\\ B_{\lambda_j}\end{pmatrix}=O(1)$ for all $k,m$ in the support of $\widehat{A}$. We deduce that
\[
\| \WBLII^0\|_{L^\infty( \R^2_+)}\lesssim \eps^{-2}  \| \widehat{A} \|_{L^1(\R^2)} \lesssim 1.
\]

The $L^2$ norm is slightly more involved. Using the Fubini theorem, we have
\[
\WBLII^0(t,x,y)=-\int_{\R}e^{ikx} \left(\int_{\R }\sum_{j\in \{2,3\}}\widehat{A} (k,m )a_j e^{-i\omega t -\lambda_jy }\begin{pmatrix}
U_{\lambda_j}\\ W_{\lambda_j}\\ B_{\lambda_j}\end{pmatrix} dm\right) dk.
\]
Note that if $(k,m)\in \mathrm{Supp } \widehat{A}$, there exist constants $C,c>0$ (independent of $\eps$) such that for $j=2,3$,
\[
|a_j|\leq \frac{C}{\eps^2},\quad \left\| \begin{pmatrix}
U_{\lambda_j}\\ W_{\lambda_j}\\ B_{\lambda_j}\end{pmatrix} \right\|\leq C,\quad \Re(\lambda_j )\geq \frac{c}{\eps^2}.
\]

Therefore, using the Plancherel theorem,
\begin{eqnarray*}
\| \WBLII^0(t,y)\|_{L^2_x}^2&=&
\int_{\R}\left| \int_{\R }\sum_{j\in \{2,3\}} \widehat{A} (k,m )
a_j e^{-i\omega t -\lambda_j y}\begin{pmatrix}	U_{\lambda_j}\\ W_{\lambda_j}\\ B_{\lambda_j}\end{pmatrix} 
dm \right|^2dk \\
&\leq & \frac{C}{\eps^8} \int_{\R}\left| \int_{\R } \sum_{\pm} \exp\left(-\frac{cy}{\eps^2}\right) \chi\left(\frac{k\pm k_0}{\eps^2}\right)\chi\left(\frac{m\pm m_0}{\eps^2}\right)dm \right|^2dk\\
&\leq & \frac{C}{\eps^4} \exp\left(-\frac{2cy}{\eps^2}\right)\sum_{\pm}\int_{\R} \chi^2 \left(\frac{k\pm k_0}{\eps^2}\right)\:dk\\
&\leq &  \frac{C}{\eps^2} \exp\left(-\frac{2cy}{\eps^2}\right).
\end{eqnarray*}
Integrating this inequality with respect to $y$, we obtain $\| \WBLII^0(t,y)\|_{L^2_{x,y}}\lesssim 1$.

$\bullet$ The $\eps^3$ boundary layer is treated with similar arguments. We recall that $a_5=O(\eps^{-1})$, and that there exists a constant $c$ such that $\lambda_5\gtrsim c \eps^{-3}$ for all $(k,m)\in \mathrm{Supp } A$. Therefore
\[
\| \WBLIII^0\|_{L^\infty( \R^2_+)}\lesssim \eps^{-1}  \| \widehat{A} \|_{L^1(\R^2)} \lesssim \eps,
\]
and
\[
\| \WBLIII^0(t,y)\|_{L^2_x}^2\leq  \frac{C}{\eps^2}\exp\left(-\frac{2cy}{\eps^3}\right)\sum_{\pm}\int_{\R} \chi^2 \left(\frac{k\pm k_0}{\eps^2}\right)\:dk \leq C \exp\left(-\frac{2cy}{\eps^3}\right).
\]
The result follows.
\end{proof}

\begin{Rmk}
\begin{itemize}
	\item	The intuition behind the sizes of the $\eps^2$ and $\eps^3$ boundary layers is the following. Because of the wave packet, both boundary layers are localized in $x$ in a band of width $\eps^{-2}$. As a consequence, the $\eps^2$ (resp. $\eps^3$) boundary layer is localized in a region of size $\eps^{-2} \times \eps^{2}=1$ (resp. $\eps^{-2} \times \eps^3=\eps$). Hence the $L^2$ and $L^\infty$ sizes of the $\eps^2$ boundary layer are of the same order, while the $L^2$ size of the $\eps^3$ boundary layer is
	\[
	\|\WBLIII^0\|_{L^2}\lesssim \|\WBLIII^0\|_{L^\infty} \times (\eps^{-2} \times \eps^3)^{1/2} \lesssim \eps \times \eps^{1/2}= \eps^{3/2}.
	\]

\item The same bounds hold for all derivatives with respect to $x$. Concerning the $y$ derivatives, the same bounds hold for the incident wave packet, but a power $\eps^j$ is lost with each derivation for the $\eps^j$ boundary layer, $j=2,3$.

\end{itemize}

	\end{Rmk}


\section{Weakly nonlinear system}
\label{sec:NL-BL}
This section aims at providing a description of our approximate solution when the weak nonlinearity comes into play. As anticipated in the introduction, the nonlinear solution can be formally seen as a small perturbation of the linear one. 

 {More precisely, we will start from the (almost) exact solution to the linear system $\cW^0$, and we will construct an approximate  solution to the weakly nonlinear system in the form 
$$ \cW_{app} = \cW^0 +\cW^1$$
where the corrector $\cW^1$ compensates the self-interactions of $\cW^0$. In other words, the nonlinear term will be treated as a source term for the linear equation.}
One difficulty  here is to deal with nonlinear interactions in the context of the wave packet approximation, which is an infinite superposition of plane waves.

In the case of near-critical reflection with the scalings suggested by Dauxois and Young \cite{DY}, where $\delta$ is the parameter of the weak nonlinearity and the viscosity is of order $O(\varepsilon^6)$, the weakly nonlinear system is the one considered in (\ref{original_system}).
Let us rewrite this system in the following more compact way. Let $\bP$ be the orthogonal projection onto the divergence free vector fields in $L^2(\R_+)^3$, i.e. fields of the form $\cW=(u,w,b)\in L^2(\R^2_+)^3$ with $\partial_xu+\partial_y w=0$  . For $\cW,\, \cW'$ also belonging to $H^1(\R_+)^3$, define
\[
\cL_\eps \begin{pmatrix} u\\w\\b \end{pmatrix}
=\bP \begin{pmatrix}
-b \sin \gamma\\ - b \cos \gamma \\ u \sin \gamma + w \cos \gamma
\end{pmatrix} - \begin{pmatrix}
\nu_0 \varepsilon^6 \Delta u\\ \nu_0 \varepsilon^6 \Delta w\\ \kappa_0 \varepsilon^6 \Delta b
\end{pmatrix}
\]
and
\be\label{def:Q}
Q(\cW, \cW')= \bP( (u\d_x + w\d_y ) \cW').
\ee
Then system \eqref{original_system} becomes
\be\label{eq:NL-compact}
\d_t \cW + \cL_\eps \cW + \delta Q(\cW, \cW)=0.
\ee

The main result of this section is the following.
\begin{Prop}\label{prop:W1}
	Let $\cW^0$ be given by \eqref{def:W0}. There exists a function $\cW^1$, which satisfies
	\[
	\d_t \cW^1 + \cL_\eps \cW^1= - \delta Q(\cW^0, \cW^0) + r^1,
	\]
	together with the boundary conditions $u^1_{|y=0}=w^1_{|y=0}=0$, $\d_y b^1_{|y=0}=0$, where the remainder $r^1$ satisfies
	\[
	\| r^1\|_{L^2}\leq  C \delta \eps^2,
	\]
	and which can be decomposed as
	\[
	\cW^1= \WSH + \WMF + \WBLII^1 + \WBLIII^1.
	\]
	The terms of the above decomposition satisfy the following properties:
	\begin{itemize}
	\item The $\eps^2$ boundary layer satisfies
	$$
	\|  \WBLII^1 \|_{L^2(\R_+^2)} \lesssim \delta,\quad 	\| \WBLII^1 \|_{L^\infty(\R_+^2)} \lesssim \delta \, ;$$
	Moreover, the normal component of the velocity is smaller by a factor $O(\eps^2)$. Derivatives with respect to $t$ and $x$ are bounded, while each derivative with respect to $y$ has a cost $O(\eps^{-2})$.
	\item The $\eps^3$ boundary layer satisfies
	$$
	\|  \WBLIII^1 \|_{L^2(\R_+^2)} \lesssim \delta \eps^{1/2},\quad 	\| \WBLIII^1 \|_{L^\infty(\R_+^2)} \lesssim \delta \, ;$$
	Moreover, the normal component of the velocity is smaller by a factor $O(\eps^3)$. Derivatives with respect to $t$ and $x$ are bounded, while each derivative with respect to $y$ has a cost $O(\eps^{-3})$.
	\item The second harmonic oscillates around frequencies $\pm 2 \omega_0$, and satisfies
	$$
	\|  \WSH \|_{L^2(\R_+^2)} \lesssim \delta ,\quad 	\| \WSH \|_{L^\infty(\R_+^2)} \lesssim \delta \eps^{2}\, ;$$
	\item The mean flow is much smaller
	$$\| \WMF\|_{H^s(\R^2_+)}\lesssim \delta \eps^2,\quad \| \WMF\|_{W^{s,\infty}(\R^2_+)}\lesssim \delta \eps^3\,,$$for any $s\in \N$.
	For these second harmonic and mean flow contributions, derivatives with respect to $t,x,y$ are bounded.
		\end{itemize}

\end{Prop}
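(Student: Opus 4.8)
## Proof proposal for Proposition~\ref{prop:W1}

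The plan is to construct $\cW^1$ mode by mode, exploiting the structure of the quadratic term $Q(\cW^0,\cW^0)$ once $\cW^0$ is decomposed according to frequency and localization. First I would expand $Q(\cW^0,\cW^0)$ using $\cW^0 = \Winc + \WBLII^0 + \WBLIII^0 + c.c.$ and sort the resulting products according to (a) the time frequency they generate --- $0$ (rectified/mean flow) or $\pm 2\omega_0$ (second harmonic), the cross terms $\omega_0 - \omega_0 = 0$ also feeding the mean flow --- and (b) their spatial localization, i.e. whether each factor is the interior wave packet (no $y$-decay) or one of the two boundary layers (decay in $y/\eps^2$ or $y/\eps^3$). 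The key observation, already visible from Lemma~\ref{lem:size-W0}, is that $\Winc = O(\eps^2)$ in $L^\infty$ while $\WBLII^0 = O(1)$; hence the dominant self-interaction is $Q(\WBLII^0,\WBLII^0)$, and the products involving $\Winc$ are smaller by powers of $\eps$. Each interior--interior product is itself $O(\eps^4)$ in $L^\infty$ and $O(\eps^2)$ in $L^2$, so --- after checking it can be absorbed or lifted with a corrector of the claimed size --- it may be put into the remainder $r^1$; this is the origin of the bound $\|r^1\|_{L^2}\le C\delta\eps^2$.

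Next, for each surviving source term I would invert $\d_t + \cL_\eps$ by the method of the previous section: a term oscillating at frequency $2\omega_0$ with tangential wavenumbers near $2k_0$ is, by construction, a \emph{non-critical} mode (since $\zeta_{2k_0,2m_0} = (2\omega_0)^2 - \sin^2\gamma = 3\sin^2\gamma + O(\eps^2) \gtrsim 1$), so it is handled by a forced second-harmonic wave packet $\WSH$ in the interior --- which exists precisely when $2\omega_0 \le N = 1$, consistently with the Dauxois--Young scenario --- plus boundary-layer correctors of sizes $\nu^{1/3}=\eps^2$ and $\nu^{1/2}=\eps^3$ to lift its trace, using Lemma~\ref{lem:boundary-op-noncritical} (or rather its wave-packet version). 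The zero-frequency source produces the mean flow: here the relevant tangential wavenumbers are $O(\eps^2)$, i.e. the \emph{non-oscillating regime} $|k|\lesssim\eps^2 \lesssim \nu^{1/3}$, $|\omega|\lesssim\nu^{1/3}$ of Lemma~\ref{lem:boundary-op-non-osc}. As flagged in point (vi) of the introduction, in this regime only two of the three boundary conditions can be lifted cleanly by genuine boundary layers; the leftover normal-velocity trace $w_{|y=0} = \exp(i(kx-\omega t))\sum_{j\in\{3,5\}} \tfrac{ik}{\lambda_j}a_j$ is of size $O(\eps^2)\times O(\nu^{-1/2})^{-1} = O(\eps^5)$-small and is absorbed by the extra crude corrector $\cW_{corr}$ (rather than being part of $\cW^1$ proper), contributing only to $r^1$. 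The $\eps^2$ and $\eps^3$ boundary-layer correctors $\WBLII^1,\WBLIII^1$ then collect all the boundary-layer self-interactions of $\cW^0$ together with the lifting terms just described; their sizes follow from the same Plancherel computations as in the proof of Lemma~\ref{lem:size-W0}, with the gain/loss of factors $\eps$ per $y$-derivative exactly as stated, and with the normal component smaller by $\eps^2$ (resp.\ $\eps^3$) because $W_{\lambda_j} = ik/\lambda_j$ and $\lambda_j \sim \eps^{-2}$ (resp.\ $\eps^{-3}$).

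The sizes then drop out of bookkeeping: $Q(\WBLII^0,\WBLII^0) = O(1)$ in $L^\infty$ but localized in $y$ over width $\eps^2$ and in $x$ over width $\eps^{-2}$, hence $O(1)$ in $L^2$; the forced responses inherit an extra factor $\delta$ from \eqref{eq:NL-compact} and, for the second harmonic and mean flow, further gains because their source has small $L^2$ mass or because the inversion of $\cL_\eps$ at a non-resonant frequency costs nothing but the $\eps^2$-smallness of $\Winc$-involving pieces (mean flow) or the interior-wave normalization (second harmonic is $O(\delta\eps^2)$ in $L^\infty$ just like $\Winc$). The main obstacle I anticipate is the mean-flow analysis in the non-oscillating regime: one must show that, despite being able to lift only two boundary conditions, the residual normal-velocity trace is genuinely negligible in $L^2$ and that the slowly-decaying eigenmode $\lambda_2$ (explicitly discarded in Section~\ref{sec:linear_BL} because of the convergence issue with $\int \varphi^2(k/\nu^{1/3})|k|^{-3}\,dk$) does not need to be reinstated --- equivalently, that whatever it would have lifted is small enough to throw into $r^1$. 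A secondary subtlety is verifying that the quadratic term $Q$ applied to the fast-oscillating-in-$y$ boundary layers does not produce terms that $\cL_\eps$ cannot absorb at bounded cost; this is where the expansions of the linear boundary-layer operators derived at the end of Section~\ref{sec:linear_BL} are essential, since one needs the leading-order profiles $\bar\Lambda_j, \bar A_j$ to identify the structure of the self-interaction and confirm it lands in the span of admissible corrector profiles.
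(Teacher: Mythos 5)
Your global strategy is indeed the paper's: expand $Q(\cW^0,\cW^0)$, sort the interactions by time frequency ($0$ and $\pm 2\omega_0$) and by decay rate, invert $\d_t+\cL_\eps$ on the boundary-layer-localized sources using non-resonance of $0,\pm 2\omega_0$ with $\pm\sin\gamma$, lift the remaining traces with the boundary operators of Section \ref{sec:linear_BL} (the non-critical operator at $\pm2\omega_0$ generating the second harmonic, the non-oscillating one at frequency $0$), and send the $O(\delta\eps^2)$ interactions into $r^1$. The genuine gap is in your treatment of the zero-frequency trace. You propose that the residual normal-velocity trace left by Lemma \ref{lem:boundary-op-non-osc} is ``small enough to throw into $r^1$'', with the crude corrector that lifts it declared ``not part of $\cW^1$ proper''. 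This cannot work as stated: $r^1$ is an interior $L^2$ source term in the evolution equation, and a nonzero trace $w^1_{|y=0}$ is not representable as such; moreover the Proposition you are proving requires $u^1_{|y=0}=w^1_{|y=0}=\d_y b^1_{|y=0}=0$ exactly and includes $\WMF$ in the decomposition of $\cW^1$, precisely because the stability argument of Section \ref{sec:stability} (the boundary terms in the integrations by parts) tolerates no error whatsoever on the boundary conditions. The paper's resolution is to lift this trace \emph{exactly} by the explicit divergence-free corrector $\WMF=(-G(x)\,\eps^2\theta'(\eps^2y),\ \theta(\eps^2y)\,\d_xG(x),\ 0)$ with $\mathfrak W^1=\d_xG$; only its equation residual, of size $O(\delta\eps^2)$, enters $r^1$ (Lemma \ref{lem:mean-flow-a}). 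Your size bookkeeping for this trace is also off: per mode $\mathfrak w_{(a)}=O(1)$ and $(k+k')\bar\alpha_j/\Lambda_j=O(\eps)$ (since $\bar\alpha_j=O(\eps^{-4})$), and after integration over the support of $\cA_0$ the trace is $O(\delta\eps^4)$ in $L^\infty$ and $O(\delta\eps^3)$ in $L^2$, not ``$O(\eps^5)$''; it is small, but smallness is irrelevant — it must be lifted, not discarded.

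A second omission concerns the type (b) sources, which decay at rate $\eps^{-3}$: there the inversion of the linear operator cannot be done with the inviscid rotation alone, because $\eps^6\d_{yy}$ acting on $e^{-\mu y}$ with $\Re(\mu)\sim\eps^{-3}$ is $O(1)$. The paper keeps the diffusion at leading order and inverts the full $2\times2$ matrix with entries $-i\alpha-\nu_0m^2$, $-\sin\gamma$, $\sin\gamma$, $-i\alpha-\kappa_0m^2$, checking that its determinant stays bounded away from zero for $\alpha$ near $0,\pm2\omega_0$ uniformly in $\nu_0,\kappa_0$ (Lemma \ref{M-lem}); your sketch invokes only the non-resonance of the rotation operator, which suffices for the $\eps^{-2}$-decaying sources of type (a) (Lemma \ref{V-lem}) but not here, so the claimed estimates on $\WBLIII^1$ are unsupported. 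Finally, note that in the paper the second harmonic is not an interior \emph{forced} response to the $2\omega_0$ source (the source decays on the $\eps^2$ scale, so the particular solution is boundary-layer localized); it is the reflected-wave component $\cB^{nc,RW}$ produced when lifting the trace of that boundary-layer particular solution, with the dichotomy propagating/evanescent according to $\sin\gamma\lessgtr 1/2$ — your frequency condition $2\omega_0\le 1$ is the right one, but the mechanism should be stated this way.
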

\noindent
Note that the terms do not all have the same size. We need all of them  to get a small remainder $r^1$ and to lift exactly all boundary conditions. However our convergence result will not be precise enough to give a physical meaning to all correctors (in particular to the mean flow). There are several tools that are important in our construction:
\begin{itemize}
\item 	First, since $Q$ is a bilinear operator, we can use  definition \eqref{def:W0} to write $Q(\cW^0, \cW^0)$ as a sum of nine terms. According to Lemma \ref{lem:size-W0}, the largest of these terms is $Q(\WBLII^0, \WBLII^0)$. Hence we will need to carefully write the expression of this quadratic term, and in particular, to outline its dependency in time (i.e. its time-frequency) and in space (frequency in the tangential variable, decay in $y$).

\item By linearity, it is also  clear that every term in $Q(\cW^0, \cW^0)$ (except for $Q( \Winc, \Winc)$)  is a linear superposition of terms of the type
\[
\exp(i (kx-\omega t) -\lambda y),
\]
with $\Re(\lambda)\gg 1$, and $k$ (resp. $\omega$) is in a neighborhood of size $\eps^2$ of $0$ or $2 k_0$ (resp. $0$ or $2 \om_0$). Therefore it will be sufficient to understand the behavior of solutions of
\[
\d_t \cW + \cL_\eps \cW = \exp(i (kx-\omega t) -\lambda y),
\]
with homogeneous boundary conditions. We will perform an almost explicit computation of an approximate solution, relying on an asymptotic expansion of $\cL_\eps$ and on the boundary layer analysis of the previous section.


\end{itemize}

Let us now list the nine possible type of interactions by decreasing size in $L^2$, and give their  typical decay (based on the estimates in  Lemma \ref{lem:size-W0}). 
\begin{table}[H]
\caption{List of quadratic interactions}
\centering
\begin{tabular}{|p{2cm}|p{3cm}|p{2.5cm}|p{2.7cm} |}\hline \\ 
{} & {type of interaction} & {size in $L^2$ } & {typical decay rate  }\\ \\ \hline

{(a1)} & $Q(\WBLII^0, \WBLII^0)$ & $O(1)$ & {$\eps^{-2}$ }\\ \\ \hline

{(a2)} & $Q(\Winc, \WBLII^0)$ & $O(1)$ & {$\eps^{-2}$ }\\ \\ \hline

{(b1)} & $Q(\WBLII^0, \WBLIII^0)$ & $O(\eps^{1/2} )$ & {$\eps^{-3}$ }\\ \\ \hline
{(b2)} & $Q(\Winc, \WBLIII^0)$ & $O(\eps^{1/2} ) $& {$\eps^{-3}$ }\\ \\ \hline

{(b3)} & $Q(\WBLIII^0, \WBLII^0)$ &  {$O(\eps^{3/2})$} & {$\eps^{-3}$ }\\ \\ \hline

{(c1)} & $Q(\WBLII^0, \Winc) $ &  {$O(\eps^2)$} & {$\eps^{-2}$ }\\ \\ \hline

{(c2)} & $Q(\Winc, \Winc)$ & $O(\eps^2)$ & {no decay }\\ \\ \hline

{(c3)} & $Q(\WBLIII^0, \WBLIII^0)$ & $O(\eps^{5/2})$ & {$\eps^{-3}$ }\\ \\ \hline

{(c4)} & $Q(\WBLIII^0, \Winc )$ & $O(\eps^{7/2}) $& {$\eps^{-3}$ }\\ \\ \hline

\end{tabular}
\label{Tab:interactions}
\end{table}

\begin{Rmk}
 Note that boundary layer terms of size $\eps^2$ (type (a)) will give birth to  boundary layer terms of size $\eps^2$  AND of size $\eps^3$  (through the boundary condition).
 { Note also that we will not lift terms of type (c) as they are already of size $O(\eps^2)$ and can be directly incorporated to the remainder $r^1$.}
\end{Rmk}

\subsection{Interactions of type (a)}

In this paragraph, we treat the  nonlinear terms which are of size $O(1)$ in $L^2$, with an exponential decay of rate $O(\eps^{-2})$. More precisely, we will prove the following  result:
\begin{Prop}\label{W1a-prop}
	There exists a corrector   $\cW^1_{(a)} $ which satisfies the following properties:
\begin{itemize}
	\item[(i)] $\cW^1_{(a)}$ is a solution of the evolution equation
	\[
	\d_t \cW^1_{(a)} + \cL_\eps \cW^1_{(a)} = - \delta \times \text{(a)}  + r^1_{(a)},
		\]
	with 
	\[
	\| r^1_{(a)}\|_{L^2} \lesssim \delta \eps^2;
	\]

	\item[(ii)] $\cW^1_{(a)}$ satisfies exactly the homogeneous boundary conditions \eqref{BC};
	
	\item[(iii)]  $\cW^1_{(a)}$ can be decomposed as the sum of a mean flow  $\cW^1_{MF;(a)}$, a second harmonic $\cW^1_{II;(a)}$, an $\eps^2$ boundary layer $\cW^1_{BL,\eps^2; (a)}$, and an $\eps^3$ boundary layer $\cW^1_{BL, \eps^3; (a)}$
	$$\cW^1_{(a)}= \cW^1_{BL,\eps^2; (a)} + \cW^1_{BL, \eps^3; (a)} + \cW^1_{II;(a)} + \cW^1_{MF;(a)}\,.$$
	\begin{itemize}
	\item The $\eps^2$ boundary layer satisfies
	$$
	\|  \cW^1_{BL,\eps^2; (a)} \|_{L^2(\R_+^2)} \lesssim \delta,\quad 	\| \cW^1_{BL,\eps^2; (a)} \|_{L^\infty(\R_+^2)} \lesssim \delta \, ;$$
	Moreover, the normal component of the velocity is smaller by a factor $O(\eps^2)$. Derivatives with respect to $t$ and $x$ are bounded, while each derivative with respect to $y$ has a cost $O(\eps^{-2})$.
	\item The $\eps^3$ boundary layer satisfies
	$$
	\|  \cW^1_{BL,\eps^3; (a)} \|_{L^2(\R_+^2)} \lesssim \delta \eps^{1/2},\quad 	\| \cW^1_{BL,\eps^3; (a)} \|_{L^\infty(\R_+^2)} \lesssim \delta \, ;$$
	Moreover, the normal component of the velocity is smaller by a factor $O(\eps^3)$. Derivatives with respect to $t$ and $x$ are bounded, while each derivative with respect to $y$ has a cost $O(\eps^{-3})$.
	\item The second harmonic satisfies
	$$
	\|  \cW^1_{II;(a)} \|_{L^2(\R_+^2)} \lesssim \delta ,\quad 	\| \cW^1_{II;(a)} \|_{L^\infty(\R_+^2)} \lesssim \delta \eps^{2}\, ;$$
	\item The mean flow is much smaller
	$$\| \cW^1_{MF;(a)}\|_{H^s}\lesssim \delta \eps^2,\quad \| \cW^1_{MF;(a)}\|_{W^{s,\infty}}\lesssim \delta \eps^3\,.$$
	For these second harmonic and mean flow contributions, derivatives with respect to $t,x,y$ are bounded.
		\end{itemize}

\end{itemize}

\end{Prop}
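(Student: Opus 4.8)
\emph{Proof proposal.} The plan is to follow the two-step scheme announced before Table~\ref{Tab:interactions}: first rewrite the type-(a) forcing $-\delta\,(\mathrm a)$, with $(\mathrm a)=Q(\WBLII^0,\WBLII^0)+Q(\Winc,\WBLII^0)$, as a wave-packet superposition of elementary modes $\exp(i(kx-\omega t)-\lambda y)$ with $\Re\lambda\sim\eps^{-2}$; then invert the operator $\d_t+\cL_\eps$ on each such mode, producing a particular solution that is itself an $\eps^2$ boundary layer, together with a boundary-layer corrector built from the operators of Section~\ref{sec:linear_BL} that lifts its trace at $y=0$. Concretely, inserting the representation \eqref{def:linear-BL} of $\WBLII^0$ and the packet \eqref{incident_wave_packet} of $\Winc$ into the bilinear form \eqref{def:Q} and expanding the products, $(\mathrm a)$ becomes a fourfold integral over $(k,m,k',m')$, each within $O(\eps^2)$ of $\pm(k_0,m_0)$, of modal terms obtained by multiplying a component of the first factor (its tangential velocity $\sim 1$, or its normal velocity $\sim\eps^2$) by a derivative of the second factor ($\d_x$ giving a bounded factor, $\d_y$ a factor $-\lambda_{j'}\sim\eps^{-2}$). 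Two points must be tracked carefully here: the large factor $\eps^{-2}$ from $\d_y$ is exactly compensated by the $O(\eps^2)$ smallness of the normal component of the $\eps^2$ boundary layer (Lemma~\ref{lem:size-W0}), so that the $w\,\d_y$ and $u\,\d_x$ contributions are both $O(1)$ in $L^2$; and $k+k'$ is within $O(\eps^2)$ of $0$ or of $\pm 2k_0$, while $\omega_{k,m}+\omega_{k',m'}$ is within $O(\eps^2)$ of $0$ or of $\pm 2\omega_0$, so $(\mathrm a)$ splits into a ``second-harmonic part'' and a ``mean-flow part''.

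\emph{Step 2: inverting $\d_t+\cL_\eps$ on an elementary mode.} Fix $f=V\exp(i(kx-\omega t)-\lambda y)$ with $\Re\lambda\sim\eps^{-2}$ and $(\omega,k)$ within $O(\eps^2)$ of $(0,0)$ or of $\pm(2\omega_0,2k_0)$. One first builds a modal particular solution $\cW_p=V_p\exp(i(kx-\omega t)-\lambda y)$: writing $\cL_\eps=\cL_0+(\text{terms that are }O(\eps^2)\text{ on such profiles, since }\eps^6\lambda^2=O(\eps^2))$, where $\cL_0$ is the $\eps$-independent buoyancy coupling $\bP(-b\sin\gamma,-b\cos\gamma,u\sin\gamma+w\cos\gamma)$, this reduces to a $3\times3$ system $M_0(\omega,k,\lambda)V_p=V+O(\eps^2)$. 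The matrix $M_0$ is invertible with $M_0^{-1}=O(1)$, because for $\Re\lambda\sim\eps^{-2}$ the symbol of $\cL_0$ degenerates to the near-vertical value $\pm i\sin\gamma$, while the forcing oscillates at $\omega\approx 0$ or $\omega\approx\pm 2\sin\gamma$, all distinct from $\pm\sin\gamma$ (no resonance). Hence $\cW_p$ is an $\eps^2$ boundary layer of the size of $f$, with a nonzero trace at $y=0$, which is lifted by the boundary operators of Section~\ref{sec:linear_BL}: for the second-harmonic modes, $|k|\sim 2k_0\gtrsim 1$ and $\zeta=4\sin^2\gamma-\sin^2\gamma\gtrsim 1$ for $\gamma$ away from the degenerate angles, so one is in the non-critical regime and applies $\cB^{nc,RW}_{\omega,k}+\cB^{nc,BL}_{\omega,k}$ (Lemma~\ref{lem:boundary-op-noncritical}), producing a second-harmonic wave in the outer domain plus an $\eps^3$ boundary layer; for the mean-flow modes, $|k|,|\omega|\lesssim\eps^2\sim\nu^{1/3}$, so one is in the non-oscillating regime (Lemma~\ref{lem:boundary-op-non-osc}), lifting the tangential-velocity and buoyancy traces by two exponentially localized mean flows and disposing of the residual (small) normal trace by a crude corrector with no physical relevance (point~(vi) of the introduction).

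\emph{Step 3: assembly and estimates.} Integrating the particular solutions and the trace-correctors over the packet variables and grouping by localization and frequency yields $\cW^1_{BL,\eps^2;(a)}$, $\cW^1_{BL,\eps^3;(a)}$, the propagating second harmonic $\cW^1_{II;(a)}$ and the mean flow $\cW^1_{MF;(a)}$, which by construction satisfy the homogeneous boundary conditions \eqref{BC} exactly. The stated $L^2$ and $L^\infty$ bounds follow by repeating the arguments of Lemma~\ref{lem:size-W0}: Plancherel in $x$, explicit integration of $e^{-2\Re\lambda y}$ in $y$, and the fact that the wave packet confines each boundary-layer term to a band of width $\eps^{-2}$ in $x$, hence to a region of measure $1$ for the $\eps^2$ layer and $\eps$ for the $\eps^3$ layer (so that the $\eps^3$ layer gains a factor $\eps^{1/2}$ in $L^2$ over $L^\infty$); the extra $\eps^2$ smallness of $\cW^1_{MF;(a)}$ reflects the special structure of the zero-frequency, zero-wavenumber component (generated only by the two conjugate branches of the packet acting against each other) together with the crude way the last boundary condition is lifted. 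Finally $r^1_{(a)}$ collects the $O(\eps^2)$ errors from $\cL_\eps-\cL_0$ applied to the constructed terms, the mismatch between $\lambda_j\eps^2$ and the limiting values $\bar\Lambda_j$ of the previous subsection, and the portion of the normal trace that cannot be lifted in the non-oscillating regime, all of size $\delta\eps^2$ in $L^2$.

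\textbf{Main obstacle.} The delicate point is the bookkeeping of Step~1: one must show that the bilinear interaction of these \emph{infinite superpositions} of boundary-layer and incident modes again produces a forcing to which the parameter-dependent boundary operators $\cB^c,\cB^{nc,RW},\cB^{nc,BL},\cB^{no}$ of Section~\ref{sec:linear_BL} apply \emph{uniformly} in $(k,m)$, so that the construction and all its estimates survive integration over the wave packet, and one must keep precise track of the fact that the $w\,\d_y$ part of $Q$ never beats the $u\,\d_x$ part. A secondary difficulty is the mean-flow regime, where only two of the three boundary conditions are liftable, and one has to check that the leftover normal trace is genuinely $O(\eps^2)$ smaller so that it can be absorbed into $r^1_{(a)}$.
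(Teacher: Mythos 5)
Your overall route is the same as the paper's: expand the type-(a) source as a wave-packet superposition of modes $e^{i(kx-\omega t)-\lambda y}$ with $\Re\lambda\sim\eps^{-2}$, invert $\d_t+\cL_\eps$ mode by mode using the non-resonance $|\alpha\mp\sin\gamma|\geq \omega_0/2$ for $\alpha$ near $0,\pm2\omega_0$ (this is exactly Lemma \ref{V-lem}), restore the divergence-free/normal component, and lift the resulting trace by splitting it into the $\pm 2\omega_0$ part (non-critical operators of Lemma \ref{lem:boundary-op-noncritical}, giving $\cW^1_{II;(a)}$ and an $\eps^3$ layer) and the zero-frequency part (Lemma \ref{lem:boundary-op-non-osc} plus the crude divergence-free corrector for the leftover $w$-trace, giving $\cW^1_{MF;(a)}$). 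The structure, the splitting $\cA_0+\cA_{II}$, and the bookkeeping of $u\d_x$ versus $w\d_y$ all match the paper.

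There is, however, one concrete gap in your Step 3. You claim all the stated $L^2$/$L^\infty$ bounds ``follow by repeating the arguments of Lemma \ref{lem:size-W0}: Plancherel in $x$, explicit integration of $e^{-2\Re\lambda y}$ in $y$.'' This works for the boundary-layer pieces, but not for the second harmonic when $\sin\gamma<1/2$: in that regime $2\omega_0<1$, the root $\Lambda_2$ is purely imaginary at leading order, so there is no exponential decay in $y$ to integrate, and your argument gives no $L^2_y$ control at all. The paper's proof of Lemma \ref{lem:second-harmonic-a} handles this by expanding $\Lambda_2-\Lambda_0$ linearly in $(k+k'-2k_0,\,m+m'-2m_0)$ and using $m+m'$ as the Fourier variable dual to $y$ (a normal-form change of variables), so that integrability in $y$ comes from the wave-packet localization rather than from decay; this step is absent from your proposal. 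Relatedly, the sizes you assert for $\cW^1_{II;(a)}$, $\cW^1_{BL,\eps^3;(a)}$ and $\cW^1_{MF;(a)}$ are not pure bookkeeping: they require the quantitative inversion of the lifting system, i.e. the estimate $C^{-1}=O(\eps^3)$ entrywise as in \eqref{def:C}--\eqref{A-1}, yielding $\alpha_2=O(\eps^{-2})$, $\alpha_3,\alpha_5=O(\eps^{-4})$ from the trace data $\mathfrak u_{(a)}=O(\eps^{-4})$, $\mathfrak w_{(a)}/(k+k')=O(\eps^{-2})$, $\mathfrak b_{(a)}=O(\eps^{-6})$, and the bound $\bar\alpha_j/\Lambda_j=O(\eps^{-1})$ which makes the residual normal trace $\mathfrak W^1_{(a)}$ (and hence the mean flow) small. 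Your ``main obstacle'' paragraph points in the right direction, but without these two ingredients the claimed estimates on the second harmonic and on the mean flow are not yet proved.
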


 Our strategy will be the following: we will first find an approximate solution of the equation
\[
\d_t \cW + \cL_\eps \cW = - \delta \times \text{(a)}
\]
without taking into account the boundary condition. We then  lift the remaining trace by using the boundary operator of the previous section.  {Note that our construction is very reminiscent from the   construction of Ekman boundary layers for instance.}

  We start with a computation of $Q(\WBLII^0, \WBLII^0)$ and $Q(\Winc, \WBLII^0)$. Using the formulas of the previous section, we have
\begin{equation}\ba
	(u^0_{BL,\eps^2}\d_x + w^0_{BL,\eps^2}\d_y) \WBLII^0
	 = & \int_{\R^4}\sum_{j,j' \in \{2,3\}} \hat A \hat A' a a'e^{i(k+k') x -i(\omega + \omega')t - (\lambda + \lambda') y} \times   \\ & \times (ik' U -\lambda' W) \begin{pmatrix}
		U'\\W'\\B'
	\end{pmatrix} dk\;dm\;dk'\;dm',
\ea\label{exp:(a1)}
\end{equation}
where we wrote in a condensed manner $\widehat A$ for  $\widehat A(k,m)$ defined in (\ref{def:A}),  $a$ for $a_j(k,m)$, $(U,W,B)$ for $(U_{\lambda_j(k,m)} , W_{\lambda_j(k,m)} , B_{\lambda_j(k,m)} )$, see (\ref{def:linear-BL}),  and used the same convention for $\widehat A ', a', (U',W',B')$ (depending on $j',k',m'$). Similarly,
\begin{equation}\ba
	(u^0_{inc}\d_x + w^0_{inc} \d_y )  \WBLII^0
	&=\int_{\R^4}\sum_{j' \in \{2,3\}} \hat A \hat A' a e^{i(k+k') x -i(\omega + \omega')t - (\lambda' - im ) y} \times \\ & \times (ik' U_{inc} -\lambda ' W_{inc}) \begin{pmatrix}
		U'\\W'\\B'
	\end{pmatrix} dk\;dm\;dk'\;dm',
	\ea
\label{exp:(a2)}
\end{equation}
where  $(U_{inc}, W_{inc}, B_{inc})$ denotes here the Fourier transform of the incident wave, see (\ref{incident_wave_packet}).\\

 Note that, in this representation, the Leray projection is simply a multiplier which does not change the general form of the source term. More precisely, since $\mu$ is such that $\Re(\mu)\gtrsim \eps^{-2}$, it is given at leading order by
{$$
 (I-\bP) \left(e^{il x - \mu y}  \begin{pmatrix}
U'\\W'\\B'
\end{pmatrix} \right) = e^{il x - \mu y} \begin{pmatrix} 0 \\ W' \\0
\end{pmatrix}+O(\eps^2) \,,
$$}
 
In particular the second component of $Q(\WBLII^0, \WBLII^0)+Q(\Winc, \WBLII^0)$ is smaller than the other components by a factor $O(\eps^2)$.

Since $\widehat A$ in (\ref{def:A}) is supported in the vicinity of $\pm (k_0, m_0)$, it follows that in the expression above, for $(k,m)\in \mathrm{Supp } \widehat{A}$, $(k',m')\in \mathrm{Supp } \widehat{A}'$, $(k+k', \omega+ \omega')$ is localized either in a ball of size $\eps^2$ around $(0,0)$, or in a ball of size $\eps^2$ around $\pm (2k_0,2\omega_0)$. Hence, by linear superposition, it is sufficient to consider the equation
\be\label{eq:Leps-elem}
\d_t \cW + \cL_\eps \cW = \delta \exp(i l x -i\alpha t - \mu y)\begin{pmatrix}
U'\\W'\\B'
\end{pmatrix},
\ee
where $\mu$ is such that $\Re(\mu)\gtrsim \eps^{-2}$, $U',B'=O(1)$, $W'=O(\eps^2)$, $l$ is close to $0, \, \pm 2 k_0$, $\alpha$ is close to $0, \, \pm 2 \omega_0$. 


\medskip
\noindent
\underbar{ Step 1 : approximation of the linear operator.}
 Let us now compute an approximate operator for   $\cL_\eps$ in this scaling. Since the operator $\cL_\eps$ has constant coefficients, we look for a solution of \eqref{eq:Leps-elem} of the form $\cW=  \delta \exp(ilx -i\alpha t - \mu y) X$, for some vector $X\in \C^3$ to be determined. Because of the divergence free condition, we expect the second component of $X$ to be $O(\eps^2)$. Therefore, looking at the equation for the second component and taking into account the exponential decay, we infer that the pressure is also $O(\eps^2)$, and it is legitimate to keep only the equation on the $u$ and $b$ components of $\cW$  {(which will be checked  a posteriori anyway)}. The equation then becomes
\be\label{eq:cV}
\d_t \cV+ L \cV =  \delta \exp(i l x -i\alpha t - \mu y)\cV' \ee
where $\cV:\R_+\times \R^2_+ \to \bC^2$, $\cV' = (U', B')^T$, and \begin{equation}
\label{L-operator}
L=\sin \gamma \left(\begin{array}{cc}
0 & -1 \\
1 & 0
\end{array}\right).
\end{equation}
It is well-known that the eigenvalues of $L$ are $\pm i\sin \gamma$, and that the corresponding normalized eigenvectors are $V_\pm=\frac{1}{\sqrt{2}} (1, \, \mp i)$.
We therefore have that 
$$ \exp(Lt)=\exp(i\sin \gamma t) \Pi_+ + \exp(-i\sin \gamma t) {\Pi}_-$$
with 
$$\Pi_\pm=\frac{1}{2} \left(\begin{array}{cc} 1 & \pm i \\
	\mp i & 1
	\end{array}\right)\,.$$
We then have the following lemma~:

 {
\begin{lem}\label{V-lem}
There exists a  solution to (\ref{eq:cV}) which  is given explicitly by
\begin{eqnarray*}
	\cV(t,x,y)
	&=& \delta e^{ilx-\mu y-i\alpha t}\sum_{\pm }\frac{1}{-i\alpha \pm i \sin \gamma} \Pi_\pm \cV'.
\end{eqnarray*}
In particular, it has the same decay and the same size as $\cV'$.
\end{lem}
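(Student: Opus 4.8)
The plan is to solve the constant-coefficient evolution equation \eqref{eq:cV} directly by Duhamel's formula, exploiting that the forcing is a pure exponential in $t$ so that the time integral can be computed in closed form. First I would make the separation-of-variables ansatz $\cV(t,x,y)=\delta\, e^{ilx-\mu y - i\alpha t}\, Z$ for an unknown constant vector $Z\in\C^2$; since the operator $L$ acts only on the $\C^2$-fibre (no $x$ or $y$ derivatives survive in \eqref{eq:cV}), plugging this in reduces the PDE to the finite-dimensional linear system $(-i\alpha I + L)Z = \cV'$. The point to check is that $-i\alpha I + L$ is invertible: its eigenvalues are $-i\alpha \pm i\sin\gamma$, which vanish only if $\alpha = \pm\sin\gamma = \pm\omega_0$; but here $\alpha$ is close to $0$ or to $\pm 2\omega_0$, hence bounded away from $\pm\omega_0$ (for $\eps$ small and since $\omega_0=\sin\gamma\neq 0$), so the inverse exists and is $O(1)$.

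The second step is simply to diagonalize. Writing $I = \Pi_+ + \Pi_-$ and $L = i\sin\gamma\,\Pi_+ - i\sin\gamma\,\Pi_-$ with the spectral projectors $\Pi_\pm$ already introduced above, we get
\[
(-i\alpha I + L) = \sum_{\pm} (-i\alpha \pm i\sin\gamma)\,\Pi_\pm ,
\]
and since the $\Pi_\pm$ are complementary projections onto the eigenlines, the inverse is
\[
(-i\alpha I + L)^{-1} = \sum_{\pm} \frac{1}{-i\alpha \pm i\sin\gamma}\,\Pi_\pm .
\]
Applying this to $\cV'$ gives $Z = \sum_{\pm} \frac{1}{-i\alpha\pm i\sin\gamma}\Pi_\pm \cV'$, which is exactly the claimed formula. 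Equivalently, one can phrase this through Duhamel: $\cV(t) = \int_0^t e^{-(t-s)L}\,\delta e^{ilx-\mu y - i\alpha s}\cV'\,ds$ (plus a homogeneous term one is free to drop, since we only need \emph{a} solution), use $e^{-(t-s)L} = e^{-i\sin\gamma(t-s)}\Pi_+ + e^{i\sin\gamma(t-s)}\Pi_-$, and integrate each exponential in $s$ explicitly; the $\Pi_\pm$ are $s$-independent so they come out of the integral, and one recovers the same expression up to a term proportional to $e^{-tL}$ that decays/oscillates and can be discarded or absorbed.

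The last step is the bookkeeping of the ``same size and same decay'' claim: the prefactor $e^{ilx-\mu y-i\alpha t}$ carries the spatial decay $e^{-\Re(\mu)y}$ of $\cV'$ verbatim, the scalar coefficients $(-i\alpha\pm i\sin\gamma)^{-1}$ are $O(1)$ by the non-resonance observation above, and $\Pi_\pm$ are fixed $O(1)$ matrices, so $|\cV(t,x,y)| \lesssim \delta |\cV'|\, e^{-\Re(\mu) y}$ pointwise, giving the same $L^2$ and $L^\infty$ bounds (and the same $\eps^{-2}$ or $\eps^{-3}$ cost per $y$-derivative) as $\cV'$. I do not anticipate a genuine obstacle here; the only point requiring a word of care is the non-resonance $\alpha \neq \pm\omega_0$, i.e. verifying that the second-harmonic and mean-flow frequencies produced by the quadratic interactions stay away from the eigenfrequencies of $L$ — this is where the choice of wave-packet localization \eqref{def:A} around the critical frequency $\omega_0$, together with $2\omega_0 \neq \omega_0$, is used.
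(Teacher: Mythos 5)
Your proposal is correct and follows essentially the same route as the paper: the paper conjugates the equation by $\exp(Lt)$ (your Duhamel variant) and invokes the non-resonance bound $|-\alpha\pm\sin\gamma|\geq \omega_0/2$ to integrate without secular growth, which is equivalent to your direct inversion of $-i\alpha I+L$ via the spectral projectors $\Pi_\pm$. The non-resonance observation you flag is exactly the key point the paper makes, so there is nothing missing.
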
}

\begin{proof}
Conjugating the equation (\ref{eq:cV}) by $\exp(Lt)$, we get
$$\begin{aligned}\frac{d}{dt} \left( \sum_\pm e^{\pm i \sin \gamma t} \Pi_\pm \cV' \right)
&=& \delta e^{ilx-\mu y}\sum_{\pm } e^{-i\alpha t \pm i \sin \gamma t}  \Pi_\pm \cV'.
\end{aligned}
$$
Now, by assumption, {recalling that $\zeta=\omega_0^2-\sin^2\gamma \approx O(\varepsilon^2)$ and that $\omega+\omega'$ in formulas (\ref{exp:(a1)})-(\ref{exp:(a2)}) is localized in a neighborhood of $0, \pm 2\omega_0$, then  $$| -\alpha \pm \sin \gamma| \geq  \omega_0/2>0$$ for $\eps$ small enough}. Therefore the right-hand side does not generate any secular growth, and we have
\begin{eqnarray*}
	\cV(t,x,y)&=&\delta e^{ilx-\mu y}\sum_{\pm }\frac{1}{-i\alpha \pm i \sin \gamma}e^{-i\alpha t } \Pi_\pm \cV'.
\end{eqnarray*}
which is  the expected formula.
\end{proof}

We then define $\cV^1_{BL,\eps^2; (a1)}, \cV^1_{BL,\eps^2; (a1)}$ by superposition~:
\begin{eqnarray*}
	\cV^1_{BL,\eps^2; (a1)}&:=&-\delta\int_{\R^4}\sum_{j,j' \in \{2,3\}}\sum_{\pm} \hat A \hat A' a a'e^{i(k+k') x -i(\omega + \omega')t - (\lambda + \lambda') y} \times\\&&\qquad \times \frac{k' U +i\lambda ' W}{-(\omega + \omega') \pm \sin \gamma} \Pi_{\pm}  \begin{pmatrix}
U'\\B'
\end{pmatrix}dk\,dk'\,dm\,dm'\\
\cV^1_{BL,\eps^2; (a2)}&:=&-\delta \int_{\R^4}\sum_{j' \in \{2, 3\}} \hat A \hat A' a e^{i(k+k') x -i(\omega + \omega't) - (\lambda' - im') y} \times\\&&\qquad \times \frac{k' U_{inc} +i \lambda' W_{inc}}{-(\omega + \omega') \pm \sin \gamma} \Pi_{\pm}  \begin{pmatrix}
U'\\B'
\end{pmatrix}dk\,dk'\,dm\,dm,	\end{eqnarray*}
where we recall that in the notation $\cV^1_{BL,\eps^2; (aj)}$, the subscript $(aj$) refers to the classification of quadratic interactions in Table \ref{Tab:interactions}.
By Lemma \ref{lem:size-W0}, it is clear that these quantities are $\eps^2$ boundary layer terms, whose size in $L^2$ and in $L^\infty$ are $O(\delta)$.

\medskip
\noindent
\underbar{ Step 2 : restoring the divergence-free condition.}
 We then define  $\cW_{BL,\eps^2;(a1)}^1$ (resp. $\cW_{BL,\eps^2;(a2)}^1$) by adding the normal velocity of $\cW^1_{BL,\eps^2; (a1)}$ (resp. $\cW^1_{BL,\eps^2; (a2)})$ obtained   simply by  integrating the divergence free condition
 \begin{eqnarray*}
	W^1_{BL,\eps^2; (a1)}
&:=&-\frac12\delta\int_{\R^4}\sum_{j,j' \in \{2,3\}}\sum_{\pm} \hat A \hat A' a a' \frac{i(k+k')}{\lambda+\lambda'}e^{i(k+k') x -i(\omega + \omega')t - (\lambda + \lambda') y} \times\\&&\qquad \times \frac{k' U +i\lambda ' W}{-(\omega + \omega') \pm \sin \gamma}(U'\pm i B') dk\,dk'\,dm\,dm'\\
W^1_{BL,\eps^2; (a2)}&:=&-\frac12\delta \int_{\R^4}\sum_{j' \in \{2,3\}} \hat A \hat A' a' \frac{i(k+k')}{\lambda'-im} e^{i(k+k') x -i(\omega + \omega')t - (\lambda' - im) y} \times\\&&\qquad \times \frac{k' U_{inc} +i\lambda ' W_{inc}}{-(\omega + \omega') \pm \sin \gamma} (U'\pm i B' )dk\,dk'\,dm\,dm	\end{eqnarray*}
The sizes of $W^1_{BL,\eps^2; (a1)}, W^1_{BL,\eps^2; (a2)}$ in $L^2$ and in $L^\infty$ are $O(\delta \eps^2)$, since $\lambda, \lambda' \approx O(\varepsilon^{-2})$.

Note that $\cW_{BL,\eps^2;(a1)}^1$ and  $\cW_{BL,\eps^2;(a2)}^1$ are superpositions of vectors of  respective form
$$\begin{pmatrix}
	1\\ \frac{i(k+k')}{\lambda + \lambda'} \\ \pm i 
\end{pmatrix}, \qquad \begin{pmatrix}
	1\\ \frac{i(k+k')}{-im + \lambda'} \\ \pm i 
\end{pmatrix}\,.$$

By construction, $\sum_{j=1,2} \cW_{BL,\eps^2;(aj)}^1 $ is divergence free, and it can be easily checked that it is a solution of
\[\ba
\d_t \sum _{j=1,2} \cW_{BL,\eps^2;(aj)}^1 + \cL_\eps \sum _{j=1,2} \cW_{BL,\eps^2;(aj)}^1\\ = - \delta Q(\WBLII^0, \WBLII^0)-\delta Q(\Winc, \WBLII^0) +r^1_{(a),L},
\ea\]
where the remainder $r^1_{(a),L}$ (coming from the terms which have been neglected in $\cL_\eps$, i.e. viscous terms, terms involving the normal component $W^1_{BL,\eps^2; (aj)}$ and correctors of order $O(\eps^2)$ in the Leray projection) satisfies 
$$\|r^1_{(a),L}\|_{L^2}= O(\delta \eps^2).$$

\medskip
\noindent
\underbar{ Step 3 : Lifting the boundary conditions.} Now, $\cW_{BL,\eps^2;(a)}^1 $ has a  non-zero trace on the boundary, which must be lifted.
In order to do so, we use the boundary operators of the previous section. Let us recall that $\cW_{BL,\eps^2;(a)}^1 $ oscillates around the frequencies $0$ and $\pm 2 \omega_0$, so that we are never in the critical case. More precisely, we will use either the construction in the case $|\zeta| \gtrsim 1$ (this corresponds to the frequencies $\pm 2\omega_0$, and will give rise to the second harmonic), or the one in the case $|k|\lesssim \nu^{1/3}=\eps^2, |\omega | \lesssim \eps^2$ (this corresponds to the frequencies close to zero, and will give rise to the mean flow). 

Before doing the explicit computation, we need to identify the parts of $\cW_{BL,\eps^2;(a)}^1 $ that will give rise to a second harmonic or to a mean flow. To that end, we write
\begin{eqnarray*}
	\widehat{A} \widehat{A}'&=& \widehat{A}(k,m)\widehat{A}(k',m')\\
	&=& \frac{1}{\eps^4} \sum_{\eta,\eta'\in \{\pm 1\}} \chi\left(\frac{k+ \eta k_0}{\eps^2}, \frac{m+ \eta m_0}{\eps^2}\right) \chi\left(\frac{k'+ \eta' k_0}{\eps^2}, \frac{m'+ \eta' m_0}{\eps^2}\right)\\
	&=& \cA_0 (k,k',m,m') + \cA_{II} (k,k',m,m') ,
\end{eqnarray*}
where
\[
\ba
 \cA_0 (k,k',m,m') = \frac{1}{\eps^4} \sum_{\eta\in \{\pm 1\}} \chi\left(\frac{k+ \eta k_0}{\eps^2}, \frac{m+ \eta m_0}{\eps^2}\right) \chi\left(\frac{k'- \eta k_0}{\eps^2}, \frac{m'- \eta m_0}{\eps^2}\right),\\
  \cA_{II} (k,k',m,m') = \frac{1}{\eps^4}\sum_{\eta\in \{\pm 1\}} \chi\left(\frac{k+ \eta k_0}{\eps^2}, \frac{m+ \eta m_0}{\eps^2}\right) \chi\left(\frac{k'+ \eta k_0}{\eps^2}, \frac{m'+ \eta m_0}{\eps^2}\right).
 \ea
\]
If $(k,k',m,m')\in \mathrm{Supp } \cA_0$, then $(k+k',m+m', \omega+\omega')=O(\eps^2)$, while if $(k,k',m,m')\in \mathrm{Supp } \cA_{II}$, then $(k+k',m+m', \omega+\omega')=\pm 2 (k_0,m_0, \omega_0) + O(\eps^2)$.

Using the expressions above, we infer that
\begin{eqnarray*}
	&& \begin{pmatrix}
	 U^1_{BL, \eps^2; (a)}\\  W^1_{BL, \eps^2; (a)}\\ \d_ y  B^1_{BL, \eps^2; (a)}
	\end{pmatrix}_{|y=0}\\
 &=& -\delta \int_{\R^4}\left(\cA_0 + \cA_{II} \right)  e^{i(k+k') x - i (\om + \om')t} \begin{pmatrix}
 	\mathfrak u_{(a)} \\ \mathfrak w_{(a)} \\ \mathfrak b_{(a)}
 \end{pmatrix}dk \:dk'\:dm\: dm',
\end{eqnarray*}
where
\begin{eqnarray*}
\begin{pmatrix}
	\mathfrak u_{(a)} \\ \mathfrak w_{(a)} \\ \mathfrak b_{(a)}
\end{pmatrix}&=& \frac{1}{2}\sum_{j,j' \in \{2,3\}} \sum_{\pm}a a' \frac{k' U +i \lambda'  W}{-(\omega + \omega') \pm \sin \gamma} (U' \pm i B') \begin{pmatrix}
	1\\ \frac{i(k+k')}{\lambda + \lambda'} \\ \pm i (\lambda + \lambda')
\end{pmatrix}\\
& &+ \frac{1}{2}\sum_{j' \in \{2,3\}} \sum_{\pm}a'  \frac{k' U_{inc} + i\lambda ' W_{inc}}{-(\omega + \omega') \pm \sin \gamma} (U' \pm i B') \begin{pmatrix}
	1\\ \frac{i(k+k')}{\lambda'-im  } \\ \pm i (\lambda'-im) 
\end{pmatrix}\\&=& \sum_{j,j' \in\{2,3\}} \sum_{\pm}O(\eps^{-4}) \begin{pmatrix}
	1\\ \frac{i(k+k')}{\lambda + \lambda'} \\ O(\eps^{-2})
\end{pmatrix} + \sum_{j' \in \{2,3\}} \sum_{\pm}O(\eps^{-4}) \begin{pmatrix}
	1\\ \frac{i(k+k')}{\lambda'-im  } \\ O(\eps^{-2})
\end{pmatrix}.
\end{eqnarray*}
We recall that $a,a', \lambda, U, U'$, etc. are condensed notations for $a_{j}, a_{j'}$, $\lambda_j (k,m,\omega)$,  $U_{\lambda}, U_{\lambda'}$ respectively, see (\ref{def:linear-BL}), while we refer to (\ref{incident_wave_packet}) for the notations $U_{inc}, W_{inc}$.

We now lift the contributions of $\cA_0$ and $\cA_{II}$ using the results of the preceding section. {Note that the time frequency of $\cA_{II}$ is $\pm 2 \omega_0$, therefore it fits the framework of the non-critical case discussed in Lemma \ref{lem:boundary-op-noncritical}. On the other hand, $\cA_0$ is a non oscillating term in time and we will apply the results of Lemma \ref{lem:boundary-op-non-osc}.}

\medskip
\noindent
$\bullet$  Using	Lemma \ref{lem:boundary-op-noncritical}, we define the boundary layer corrector associated with $\cA_{II}$
\[
\begin{aligned}
 \cW_{BL,\eps^3;(aII)}^1:& = \delta  \int_{\R^4}\cA_{II} (k,k',m,m') \cB^{nc,BL}_{\omega + \omega', k+ k'}[
\mathfrak u_{(a)} , \mathfrak w_{(a)} ,\mathfrak b_{(a)}
]dk\:dk'\:dm\:dm' \\
& = \delta \sum_{j=3,5}  \int_{\R^4}\cA_{II} (k,k',m,m')\alpha_j  e^{i((k+k') x-(\omega+\omega')  t)-\Lambda_j y} dk\:dk'\:dm\:dm'
\end{aligned}
\]
and the second harmonic flow associated with the term $(a)$
\[
\begin{aligned}
\cW^1_{II;(a)}:& = \delta  \int_{\R^4}\cA_{II}(k,k',m,m') \cB^{nc,RW}_{\omega + \omega', k+ k'} [
\mathfrak u_{(a)} , \mathfrak w_{(a)} ,\mathfrak b_{(a)}
]dk\:dk'\:dm\:dm'\\
&=\delta   \int_{\R^4}\cA_{II} (k,k',m,m')\alpha_2  e^{i((k+k') x-(\omega+\omega')  t)-\Lambda_2 y} dk\:dk'\:dm\:dm',
\end{aligned}\]
where $\Lambda_2,\Lambda_3, \Lambda_5$ denote the roots with positive real parts of the determinant of the matrix $A_{\nu, \kappa} (\omega+\omega', k+k', \Lambda_j)$ defined in (\ref{def:matrix-A}), associated to $\omega+\omega', k+k'$.
The coefficients $\alpha_2,\alpha_3,\alpha_5$ satisfy (\ref{lin-syst-aj}), i.e.
$$\ba
\alpha_2 + \alpha_3 + \alpha_5 =\mathfrak u_{(a)}{= O(\varepsilon^{-4})},\\
\frac{1}{\Lambda_2} \alpha_2 + \frac{1}{\Lambda_3} \alpha_3+ \frac{1}{\Lambda_5} \alpha_5={\mathfrak w_{(a)}\over i(k+k') }{= O(\varepsilon^{-2})}  ,\\
\tilde \Lambda_2 \alpha_2+\tilde \Lambda_3 \alpha_3 +\tilde \Lambda_5 \alpha_5=-{ 1  \over \sin \gamma}  \mathfrak b_{(a)}{= O(\varepsilon^{-6})},
\ea
$$
with $\tilde \Lambda_2 \sim \frac{\Lambda_2}{\omega+\omega'} = O(1)$, $ \Lambda_3 , \Lambda_5 = O(\eps^{-3})$ and 
$$\tilde \Lambda_3 \sim{  \Lambda_3 \over i(\omega+\omega')+ \varepsilon^6\kappa_0  \Lambda_3^2},\qquad \tilde \Lambda_5 \sim{  \Lambda_5 \over i (\omega+\omega')+ \varepsilon^6\kappa_0  \Lambda_5^2}\,.$$

We can prove the following result:

 {\begin{lem}
The additional boundary corrector satisfies the following estimate
\[
	\|  \cW_{BL,\eps^3;(aII)}^1 \|_{L^2(\R_+^2)} \lesssim \delta \eps^{1/2},\quad 	\| \cW_{BL,\eps^3;(aII)}^1 \|_{L^\infty(\R_+^2)} \lesssim \delta .
	\]
If $\sin \gamma  > 1/2$, the second harmonic flow $\cW^1_{II;(a)}$ is evanescent far from the boundary.
\[
	\|  \cW^1_{II;(a)} \|_{L^2(\R_+^2)} \lesssim \delta \eps,\quad 	\| \cW^1_{II;(a)} \|_{L^\infty(\R_+^2)} \lesssim \delta \eps^{2} .
	\]
 If $\sin \gamma  <1/2$, the  second harmonic flow $\cW^1_{II;(a)}$ is
 a wave packet propagating in the outer domain, and satisfying the following estimates
	\[
	\|  \cW^1_{II;(a)} \|_{L^2(\R_+^2)} \lesssim \delta ,\quad 	\| \cW^1_{II;(a)} \|_{L^\infty(\R_+^2)} \lesssim \delta \eps^2 .
	\]
\label{lem:second-harmonic-a}
\end{lem}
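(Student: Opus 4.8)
The starting point is that both objects are given explicitly, as superpositions of the elementary solutions of Lemma~\ref{lem:boundary-op-noncritical}: by construction
\[
\cW^1_{II;(a)}=\delta\int_{\R^4}\cA_{II}(k,k',m,m')\,\alpha_2\,e^{i((k+k')x-(\omega+\omega')t)-\Lambda_2 y}\,dk\,dk'\,dm\,dm',
\]
and $\cW^1_{BL,\eps^3;(aII)}$ has the same form with $(\alpha_2,\Lambda_2)$ replaced by $(\alpha_j,\Lambda_j)$, $j\in\{3,5\}$. Since $\cA_{II}$ is supported where $(k+k',m+m',\omega+\omega')=\pm 2(k_0,m_0,\omega_0)+O(\eps^2)$, the second harmonic reflects off the slope in the non-critical regime ($\zeta=(2\sin\gamma)^2-\sin^2\gamma=3\sin^2\gamma$ is bounded away from $0$), so $\Lambda_2=O(1)$ is the reflected-wave exponent while $\Lambda_3,\Lambda_5=O(\eps^{-3})$ are the $\eps^3$ boundary-layer exponents. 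The plan is to (i) read off the sizes of the coefficients $\alpha_j$ from the $3\times 3$ linear system displayed above the statement, and then (ii) run $L^2$ and $L^\infty$ estimates on the two oscillatory integrals, exactly in the spirit of Lemma~\ref{lem:size-W0}.

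For step (i), I would feed into the system the orders of magnitude recalled before the statement — $\Lambda_2,\tilde\Lambda_2=O(1)$ and $\Lambda_3,\Lambda_5,\tilde\Lambda_3,\tilde\Lambda_5=O(\eps^{-3})$ — together with the sizes of the traces $\mathfrak u_{(a)}=O(\eps^{-4})$, $\mathfrak w_{(a)}/(i(k+k'))=O(\eps^{-2})$, $\mathfrak b_{(a)}=O(\eps^{-6})$. A direct computation then shows that the determinant of the $3\times 3$ matrix has size $\eps^{-3}$ — this requires checking that the two leading $O(\eps^{-3})$ contributions do not cancel, which holds because the limiting exponents $\bar\Lambda_3,\bar\Lambda_5$ are distinct and $\tilde\Lambda_3\ne\tilde\Lambda_5$ at leading order — and hence
\[
\alpha_2=O(\eps^{-2}),\qquad\alpha_3=O(\eps^{-4}),\qquad\alpha_5=O(\eps^{-4}).
\]
The mechanism is that the large buoyancy trace $\mathfrak b_{(a)}$ is almost entirely absorbed by the two strongly damped modes $\Lambda_3,\Lambda_5$, leaving only an $O(\eps^{-2})$ residue on the slowly decaying reflected-wave mode $\Lambda_2$.

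For step (ii), I would first handle $\cW^1_{BL,\eps^3;(aII)}$. Since $|\cA_{II}|\lesssim\eps^{-4}$ on a set of measure $\lesssim\eps^8$ in $\R^4$, one has $\|\cA_{II}\|_{L^1(\R^4)}\lesssim\eps^4$, so that $\|\cW^1_{BL,\eps^3;(aII)}\|_{L^\infty}\lesssim\delta\,\|\alpha_3\|_\infty\|\cA_{II}\|_{L^1}\lesssim\delta$. For the $L^2$ bound I would reproduce the computation of Lemma~\ref{lem:size-W0}: with $t$ and $y$ frozen, the Plancherel theorem in $x$, the bound $\Re\Lambda_j\gtrsim\eps^{-3}$, and the fact that $k+k'$ ranges over an interval of length $O(\eps^2)$ give $\|\cW^1_{BL,\eps^3;(aII)}(t,\cdot,y)\|^2_{L^2_x}\lesssim\delta^2\eps^{-2}e^{-cy/\eps^{3}}$; integrating in $y$ produces the extra factor $\eps^3$, i.e. $\|\cW^1_{BL,\eps^3;(aII)}\|_{L^2}\lesssim\delta\eps^{1/2}$. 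The same two inputs give at once $\|\cW^1_{II;(a)}\|_{L^\infty}\lesssim\delta\,\|\alpha_2\|_\infty\|\cA_{II}\|_{L^1}\lesssim\delta\eps^2$, in both regimes.

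It then remains to estimate $\|\cW^1_{II;(a)}\|_{L^2}$, which is where the dichotomy $\sin\gamma\gtrless 1/2$ enters. By Remark~\ref{rem:positive-real-part} together with the inviscid analysis underlying Lemma~\ref{lem:boundary-op-noncritical}, $\Lambda_2$ converges as $\nu\to 0$ either to a purely imaginary value (when $2\sin\gamma<1$) or to one with a strictly positive $O(1)$ real part (when $2\sin\gamma>1$). If $\sin\gamma>1/2$, then $\Re\Lambda_2\gtrsim 1$, so $\cW^1_{II;(a)}$ is confined, up to exponentially small tails, to a set of size $\eps^{-2}\times 1$ — width $\eps^{-2}$ in $x$ from the $\eps^2$-localization of $\cA_{II}$ around $k+k'=\pm 2k_0$, height $O(1)$ in $y$ from the exponential decay — whence $\|\cW^1_{II;(a)}\|_{L^2}\lesssim\|\cW^1_{II;(a)}\|_{L^\infty}(\eps^{-2})^{1/2}\lesssim\delta\eps$ (this heuristic is made rigorous by the same Plancherel computation as above, now with an $O(1)$ decay rate in $y$). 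If $\sin\gamma<1/2$, then $\Re\Lambda_2\ge 0$ is small — zero at leading order — and $\cW^1_{II;(a)}$ genuinely radiates into $\{y>0\}$, so that the pointwise bound only controls its $L^\infty$ norm. To control its energy I would first rewrite it as a bona fide two-dimensional wave packet, via the change of variables $(k,m,k',m')\mapsto(K,\Omega,s_1,s_2)$ with $K=k+k'$, $\Omega=\omega_{k,m}+\omega_{k',m'}$ (admissible because the group velocity does not vanish near $\pm(k_0,m_0)$), then integrating out $(s_1,s_2)$ and passing from $(K,\Omega)$ to the physical Fourier pair $(\xi,\eta)=(K,-\Im\Lambda_2)$, which yields
\[
\cW^1_{II;(a)}(t,x,y)=\delta\int_{\R^2}\widehat A_{II}(t,\xi,\eta)\,e^{i(\xi x+\eta y)}\,e^{-\Re\Lambda_2(\xi,\eta)\,y}\,d\xi\,d\eta,
\]
with $|\widehat A_{II}|\lesssim\eps^{-2}$ on a set of measure $\lesssim\eps^4$ (the integration over $(s_1,s_2)$ supplies the factor $\eps^4$ against $|\cA_{II}\alpha_2|\lesssim\eps^{-6}$). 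Since $e^{-\Re\Lambda_2 y}\le 1$ for $y>0$, the Plancherel theorem in $(x,y)$ — exactly as for $\Winc$ in Lemma~\ref{lem:size-W0} — gives $\|\cW^1_{II;(a)}\|_{L^2(\R^2_+)}\le\|\cW^1_{II;(a)}\|_{L^2(\R^2)}\lesssim\delta\,\|\widehat A_{II}\|_{L^2}\lesssim\delta$, while $\|\widehat A_{II}\|_{L^1}\lesssim\eps^2$ recovers the $L^\infty$ bound. I expect this last reduction — turning the four-fold integral defining the radiated second harmonic into a two-dimensional finite-energy wave packet, and checking the non-degeneracy of the phase $\Omega(\xi,\eta)$ and of the accompanying Jacobian on the (small) support — to be \emph{the only genuinely delicate point}; everything else is the routine Plancherel bookkeeping already performed in Lemma~\ref{lem:size-W0}.
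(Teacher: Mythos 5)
Your proposal is correct and follows essentially the same route as the paper: the coefficients $\alpha_2=O(\eps^{-2})$, $\alpha_3,\alpha_5=O(\eps^{-4})$ are obtained by inverting the matrix $C$ whose determinant is $\sim(\tilde\Lambda_3-\tilde\Lambda_5)/\Lambda_2=O(\eps^{-3})$, the boundary-layer and evanescent estimates follow from the same Plancherel-in-$x$ bookkeeping with decay rates $\eps^{-3}$ and $O(1)$ respectively, and the dichotomy at $\sin\gamma=1/2$ comes from $\Lambda_2\to\Lambda_0$ having an $O(1)$ positive real part versus being purely imaginary. In the propagating case your reduction to a two-dimensional wave packet via $(\xi,\eta)=(k+k',-\Im\Lambda_2)$ is the same normal-form step the paper performs by expanding $\Lambda_2-\Lambda_0=i\mu_1(m+m'-2m_0)+i\mu_2(k+k'-2k_0)+O(\eps^4)$ so that $m+m'$ becomes the Fourier variable dual to $y$, and your flagging of the non-degeneracy of this change of variables is exactly the point the paper leaves implicit.
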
}

\begin{proof}
We need first to estimate the coefficients $\alpha_2,\alpha_3,\alpha_5$, which means that we have to compute a good approximation of $C^{-1}$ for 
\be\label{def:C}C =  \begin{pmatrix}1&1&1\\ \frac1{\Lambda_2} &\frac1{\Lambda_3}&\frac1{\Lambda_5} \\
\tilde \Lambda_2 & \tilde \Lambda_3 &\tilde \Lambda_5 \end{pmatrix} . \ee
The determinant of the matrix $C$ is of the form 
$$\det C = \sum_{i,j} \pm {\tilde \Lambda_i \over  \Lambda_j} \sim {\tilde \Lambda_3-\tilde \Lambda_5 \over \Lambda_2} = O(\eps^{-3})\,.$$
We then obtain the inverse matrix by computing the comatrix. At leading order, we get
\begin{equation}
\label{A-1}
 C^{-1} \sim O(\eps^3)  \begin{pmatrix}\frac{\tilde \Lambda_5}{\Lambda_3} - \frac{\tilde \Lambda_3 }{\Lambda_5} &\tilde \Lambda_3 - \tilde \Lambda_5&\frac1{\Lambda_5} - \frac1{\Lambda_3}\\ -\frac{\tilde \Lambda_5}{\Lambda_2}  &\tilde \Lambda_5&\frac1{\Lambda_2} \\
\frac{\tilde \Lambda_3}{\Lambda_2}  &-\tilde \Lambda_3&-\frac1{\Lambda_2} \end{pmatrix}  
\end{equation}
Combining this formula with the estimates on $\mathfrak u_{(a)} , \mathfrak w_{(a)} ,\mathfrak b_{(a)}$, we obtain that
$$ \alpha_2 = O(\eps^{-2}), \quad \alpha_3,\alpha_5 = O(\eps^{-4})\,.$$

Integrating with respect to $m,m', k-k'$ as already done in the proof of Lemma \ref{lem:size-W0}, we obtain that the Fourier transform with respect to $x$ of  $\cW_{BL,\eps^3;(aII)}^1 $ is $O(\delta \eps^{-2})$ in $L^\infty_{k+k',y}$ with decay rate $O(\eps^{-3})$ in $y$,  and supported in a small domain of size $O(\eps^2)$ in $k+k'$. We  then have the following estimates
	$$\|  \cW_{BL,\eps^3;(aII)}^1 \|_{L^2(\R_+^2)} \lesssim \delta \eps^{1/2}   , \quad  \|  \cW_{BL,\eps^3;(aII)}^1 \|_{L^\infty(\R_+^2)} \lesssim \delta \,.$$

Recall that  $\Lambda_2= O(1)$ has a non negative real part and satisfies approximately
	\be\label{time-freq-cap-lambda2} ((\omega+\omega')^2 - \sin^2 \gamma) \Lambda_2^2 - 2i (k+k')\sin \gamma \cos \gamma \Lambda_2 + (k+k')^2 (\cos^2 \gamma - (\omega+\omega')^2)=0\,.\ee
	Since $k+k' =\pm 2k_0 +O(\eps^2)$ and $\omega+\omega ' = \pm 2\omega_0 +O( \eps^2)$, we have that 
	$$\Lambda_2 =\pm {2ik_0 \sin \gamma \cos \gamma \mp 4k_0 \sqrt{ \sin^2\gamma (3\sin^2\gamma - \cos^2\gamma)} \over 3 \sin^2\gamma} +O(\eps^2):= \Lambda_0 + O(\eps^2) \,. $$
	More precisely, we can expand $\omega+\omega'-2\omega_0 $, and then $\Lambda_2-\Lambda_0 $ as a (linear) function of $m-m_0$ and $k-k_0$.
	\begin{itemize}
	\item If $\sin \gamma  > 1/2$ then $\tan \gamma >1/3$ and $\Lambda_0$ has a nonnegative real part. Integrating with respect to $m,m', k-k'$ as done in the proof of Lemma \ref{lem:size-W0}, we obtain that the Fourier transform with respect to $x$ of  $\cW^1_{II;(a)} $ is $O(\delta )$ in $L^\infty_{k+k',y}$,  and supported in a small domain of size $O(\eps^2)$ in $k+k'$. We  then have the following estimates
	$$\|  \cW^1_{II;(a)} \|_{L^2(\R_+^2)} \lesssim \delta \eps , \quad  \|  \cW^1_{II;(a)} \|_{L^\infty(\R_+^2)} \lesssim \delta \eps^{2}\,.$$
\item If $\sin \gamma  < 1/2$, then $\tan \gamma <1/3$ and $\Lambda_0$ is purely imaginary. Since there is no decay in $y$ at main order, we have to use the integral with respect to $m,m'$ to get some integrability. 
Going back to the expression of $\det A_{\nu, \kappa, \omega + \omega', k+k' }$ in Section \ref{sec:linear_BL}, we find that
 there exist real constants  $\mu_1, \mu_2$ such that 
	$$ \Lambda_2 -\Lambda_0 = i \mu_1 ( m+m'-2m_0)+ i \mu_2 ( k+k'-2k_0) + O(\eps^4)\,.$$
	In other words, up to a change of variable (normal form), $m+m'$ is the Fourier variable of $y$. Integrating with respect to $m-m', k-k'$, we obtain that the Fourier transform with respect to $x,y$ of  $\cW^1_{II;(a)} $ is $O(\delta\eps^{-2} )$ in $L^\infty$,  and supported in a small domain of size $O(\eps^4)$.  We  then have the following estimates
	$$\|  \cW^1_{II;(a)} \|_{L^2(\R_+^2)} \lesssim \delta  , \quad  \|  \cW^1_{II;(a)} \|_{L^\infty(\R_+^2)} \lesssim \delta \eps^{2}\,.$$
	\end{itemize}
\end{proof}

\medskip
\noindent
$\bullet$  The contribution of $\cA_0$ is slightly more complicated, because the boundary operator only lifts two of the components. We first consider the boundary layer part due to $\cA_0$, namely 
\[
\cW^1_{BL,\eps^3; (aMF)}  = \delta  \int_{\R^4}\cA_{0 }(k,k',m,m') \cB^{no,BL}_{\omega + \omega', k+ k'}[
	\mathfrak u_{(a)} ,\mathfrak b_{(a)}
	]dk\:dk'\:dm\:dm'.
	\]

Then as observed in Lemma \ref{lem:boundary-op-non-osc}, there remains a non-zero trace for the $w$ component, namely
 \[\mathfrak W^1_{(a)} := \delta  \int_{\R^4}\cA_{0 }e^{i(k+k') x - i (\om + \om')t}\left[ - \mathfrak w_{(a)} + \sum_{l\in \{3,5\}} \frac{i(k+k')}{{\Lambda}_l } \bar{\alpha}_l\right],\]
where $\bar \alpha_3, \bar \alpha_5$ solve the system
\[\ba
\bar \alpha_3 +  \bar \alpha_5= \mathfrak u_{(a)}=O(\varepsilon^{-4}),\\
\sum_{j\in \{3,5\}}- \Lambda_j \bar \alpha_j\left[\frac{\sin \gamma +\frac{i(k+k')}{\Lambda_j}\cos \gamma}{i(\om +\om') - \kappa ((k+k') ^2- \Lambda_j^2)}\right]=\mathfrak b_{(a)}=O(\varepsilon^{-6}),
\ea
\]

with $ \Lambda_3, \Lambda_5=O(\varepsilon^{-3})$ and $k+k', \omega + \omega'=O(\varepsilon^2)$.

In particular, $\bar \alpha_3, \bar \alpha_5 = O(\eps^{-4})$ and we obtain as previously that
\[
	\|  \cW_{BL,\eps^3;(aMF)}^1 \|_{L^2(\R_+^2)} \lesssim \delta \eps^{1/2} ,\quad 	\| \cW_{BL,\eps^3;(aMF)}^1 \|_{L^\infty(\R_+^2)} \lesssim \delta .
	\]

In addition, we can prove the following result:

\begin{lem}
The boundary term $\mathfrak W^1_{(a)}$ can be lifted  by a mean flow $\cW^1_{MF;(a)}$ with divergence free velocity, satisfying the following estimates:
	$$\| D_{t,x,y}^s \cW^1_{MF;(a)}\|_{L^2(\R_+^2) }\lesssim \delta \eps^2,\quad \| D_{t,x,y}^s \cW^1_{MF;(a)}\|_{L^\infty(\R_+^2)}\lesssim \delta \eps^3\,.$$
	In particular
	$$ \d_t \cW^1_{MF;(a)} + \cL_\eps \cW^1_{MF;(a)} = r^1_{(a), MF}$$
	with
	$$ \| r^1_{(a), MF} \|_{L^2(\R_+^2)} \lesssim \delta \eps^2 \,.$$ 
\label{lem:mean-flow-a}
\end{lem}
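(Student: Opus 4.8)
The plan is to lift the residual boundary term $\mathfrak W^1_{(a)}$ in the crudest possible way, in the spirit of point (vi) of the introduction: rather than solving any evolution equation for the mean flow, I would build a small, smooth, divergence free velocity field on $\R^2_+$ carrying the prescribed trace, take $b^1_{MF}\equiv 0$ (which automatically gives $\d_y b^1_{MF|y=0}=0$), and then simply \emph{define} the remainder by $r^1_{(a),MF}:=\d_t\cW^1_{MF;(a)}+\cL_\eps\cW^1_{MF;(a)}$. This is harmless precisely because $\cW^1_{MF;(a)}$ will turn out to be so small that even the zeroth-order rotation part of $\cL_\eps$ applied to it is already of the target size $O(\delta\eps^2)$ in $L^2$; consistently with the discussion after Proposition~\ref{prop:W1}, the mean flow carries no physical information.

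The first thing to establish is the size and structure of the datum $\mathfrak W^1_{(a)}$. Substituting into its definition the bounds already obtained for $\mathfrak u_{(a)},\mathfrak w_{(a)},\mathfrak b_{(a)}$, for the solution $(\bar\alpha_3,\bar\alpha_5)$ of the associated $2\times 2$ system, and for $\Lambda_3,\Lambda_5=O(\eps^{-3})$, and using that $\cA_0$ has $L^1(\R^4)$ norm $O(\eps^4)$ with Fourier support of width $O(\eps^2)$ in both $k+k'$ and $\omega+\omega'$, I would check that $\mathfrak W^1_{(a)}(t,\cdot)$ and all its derivatives in $t$ and $x$ are $\lesssim \delta\eps^2$ in $L^2_x$ uniformly in $t$, are Schwartz, localized in $x$ on a band of width $O(\eps^{-2})$, and of time-frequency $O(\eps^2)$. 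The decisive additional property to extract is that $\int_\R \mathfrak W^1_{(a)}(t,x)\,dx=0$ for every $t$ — equivalently $\widehat{\mathfrak W^1_{(a)}}(t,\xi)$ vanishes to first order at $\xi=0$ — because $\mathfrak W^1_{(a)}$ is the $w$-trace of the divergence free, $y$-decaying fields $\cW^1_{BL,\eps^2;(a)}$ (restricted to the $\cA_0$ part) and $\cW^1_{BL,\eps^3;(aMF)}$, so that the $x$-average of their normal component is $y$-independent and vanishes as $y\to+\infty$.

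Next I would construct the lift explicitly. Fixing $\phi\in C^\infty_c([0,\infty))$ with $\phi(0)=1$ and $\phi'(0)=0$, set $w^1_{MF}(t,x,y):=-\mathfrak W^1_{(a)}(t,x)\,\phi(y)$, recover the tangential component from the divergence free condition as $u^1_{MF}(t,x,y):=(\d_x^{-1}\mathfrak W^1_{(a)})(t,x)\,\phi'(y)$ — where $\d_x^{-1}\mathfrak W^1_{(a)}$ is well defined precisely because $\widehat{\mathfrak W^1_{(a)}}$ vanishes at $\xi=0$ — and $b^1_{MF}:=0$. Then $\d_x u^1_{MF}+\d_y w^1_{MF}=0$, the traces are $u^1_{MF|y=0}=0$, $w^1_{MF|y=0}=-\mathfrak W^1_{(a)}$, $\d_y b^1_{MF|y=0}=0$, and a Plancherel estimate using the $O(\eps^2)$ frequency localization (so that $\d_x^{-1}$ costs at most $O(\eps^{-2})$), the first-order vanishing of $\widehat{\mathfrak W^1_{(a)}}$ at the origin, and the bounds from the previous step yields $\|D_{t,x,y}^s\cW^1_{MF;(a)}\|_{L^2(\R^2_+)}\lesssim \delta\eps^2$ and $\|D_{t,x,y}^s\cW^1_{MF;(a)}\|_{L^\infty(\R^2_+)}\lesssim \delta\eps^3$ for every $s$ (the cut-off $\phi$ localizes in $y$ at scale $O(1)$, so $y$-derivatives are harmless). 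Finally, with $r^1_{(a),MF}:=\d_t\cW^1_{MF;(a)}+\cL_\eps\cW^1_{MF;(a)}$, the time and tangential derivatives bring factors $O(\eps^2)$, the term $\eps^6\Delta$ acts on an $O(1)$-scale profile, and the rotation part is bounded, so $\|r^1_{(a),MF}\|_{L^2(\R^2_+)}\lesssim \|\cW^1_{MF;(a)}\|_{L^2}\lesssim \delta\eps^2$.

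The hard part is the tension, in the construction of the lift, between requiring a \emph{genuinely} divergence free velocity with a \emph{vanishing} tangential trace — which forces the antiderivative $\d_x^{-1}$ — and the fact that $\mathfrak W^1_{(a)}$ lives at tangential frequency $|\xi|\lesssim\eps^2$, so that $\d_x^{-1}$ a priori loses a power $\eps^{-2}$. The point is that $\mathfrak W^1_{(a)}$ is only the near-zero-frequency residue of a boundary trace whose bulk is absorbed by the second harmonic and its attached $\eps^3$ boundary layer, and is therefore itself smaller than $\delta\eps^2$ by an honest power of $\eps$ (gained from the $\eps^2$-wide Fourier support of $\cA_0$ together with the exponential decay of the $\eps^2$ boundary layer); combined with the first-order vanishing of $\widehat{\mathfrak W^1_{(a)}}$ at $\xi=0$ and the rapid spatial decay, this keeps $\cW^1_{MF;(a)}$ within the announced bounds. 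Keeping track of these $\eps$-powers is the only genuinely nontrivial bookkeeping in the argument.
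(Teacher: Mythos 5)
Your overall strategy is the paper's: you observe that the bracket defining $\mathfrak W^1_{(a)}$ carries an explicit factor $i(k+k')$, so that an antiderivative $G=\d_x^{-1}\mathfrak W^1_{(a)}$ is well defined, you lift the trace by the divergence-free pair $(G\,\phi'(y),\,-\mathfrak W^1_{(a)}\phi(y))$ with $b\equiv 0$, and you simply declare the remainder to be $\d_t \cW^1_{MF;(a)}+\cL_\eps \cW^1_{MF;(a)}$. The paper does exactly this, with $\cW^1_{MF;(a)}=(-G(x)\,\eps^2\theta'(\eps^2 y),\,\theta(\eps^2 y)\,\d_x G(x),\,0)$. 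The gap is quantitative and lies precisely in the step you defer as ``bookkeeping'': your vertical cut-off $\phi$ lives at scale $O(1)$, whereas the paper's $\theta(\eps^2 y)$ lives at scale $\eps^{-2}$, and this difference is what makes the announced estimates true. The paper's proof shows $\|\mathfrak W^1_{(a)}\|_{L^2(\R)}\lesssim \delta\eps^3$, $\|\mathfrak W^1_{(a)}\|_{L^\infty(\R)}\lesssim\delta\eps^4$, and, because the dominant term in the integrand of $G$ is $-\mathfrak w_{(a)}/(i(k+k'))=O(\eps^{-2})$ on the support of $\cA_0$, only $\|G\|_{L^2(\R)}\lesssim\delta\eps$, $\|G\|_{L^\infty(\R)}\lesssim\delta\eps^2$; the division by $k+k'=O(\eps^2)$ really does cost $\eps^{-2}$, and the zero-mean property you invoke buys nothing beyond the mere existence of $G$. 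With your $O(1)$-thick lift, the tangential component $u^1_{MF}=G(x)\phi'(y)$ then has size $\delta\eps$ in $L^2(\R^2_+)$ and $\delta\eps^2$ in $L^\infty$, missing the stated bounds $\delta\eps^2$ and $\delta\eps^3$ by a full factor $\eps^{-1}$; since the rotation operator $\mathbf L$ is of order one, the same loss propagates to $r^1_{(a),MF}$, so you only get $\|r^1_{(a),MF}\|_{L^2}\lesssim\delta\eps$, which is not enough for Proposition \ref{W1a-prop}, Proposition \ref{prop:W1} and ultimately Theorem \ref{Thm:stability}.

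Note also that your own claimed input, $\|\mathfrak W^1_{(a)}(t,\cdot)\|_{L^2_x}\lesssim\delta\eps^2$, is one power of $\eps$ weaker than what the paper actually proves ($\delta\eps^3$), and even the sharper paper bound does not rescue the thin lift; to make your construction close you would need an additional, unproven cancellation giving $\|\mathfrak W^1_{(a)}\|_{L^2}\lesssim\delta\eps^4$. The correct repair is the paper's: stretch the cut-off to $\theta(\eps^2 y)$, so that the tangential component becomes $-\eps^2 G(x)\theta'(\eps^2 y)$, gaining a factor $\eps^2$ in amplitude from the slow $y$-variation at the price of only $\eps^{-1}$ in the $L^2_y$ norm of the $y$-profile. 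This yields $\|u^1_{MF}\|_{L^2}\lesssim\eps^2\cdot\delta\eps\cdot\eps^{-1}=\delta\eps^2$ and $\|u^1_{MF}\|_{L^\infty}\lesssim\eps^2\|G\|_{L^\infty}\lesssim\delta\eps^4$, with $x$, $y$ and $t$ derivatives improving the bounds (each $\d_x$ or $\d_y$ brings $O(\eps^2)$), and hence the remainder estimate $\|r^1_{(a),MF}\|_{L^2}\lesssim\delta\eps^2$ as stated. Everything else in your proposal (the zero-trace of $u$ via $\phi'(0)=0$, the divergence-free check, $b\equiv 0$ handling the Neumann condition, and the crude definition of the remainder) matches the paper and is fine.
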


 {
\begin{proof}
Using the estimates on $(\mathfrak u_{(a)} , \mathfrak w_{(a)} , \mathfrak b_{(a)})$ and the fact that $k+k' = O(\eps^2)$, we can check that 
$ {\bar \alpha_j / \Lambda_j} = O(\eps^{-1} )$ for $j= 3,5$, and   $\mathfrak w_{(a)} = O(1)$. Integrating with respect to $k,k',m,m'$, we deduce that the boundary term to be lifted 
 $\mathfrak W^1_{(a)}$ satisfies
	\[
	\| \mathfrak W^1_{(a)} \|_{L^2(\R)} \lesssim \delta \eps^3,\quad 	\| \mathfrak W^1_{(a)} \|_{L^\infty(\R)} \lesssim \delta \eps^4.
	\]
Furthermore we can build  a function $G\in L^2\cap L^\infty(\R)$ such that $ \mathfrak W^1_{(a)}=\d_x G$, given by
 \[G := \delta  \int_{\R^4}\cA_{0 }e^{i(k+k') x - i (\om + \om')t}\left[ - {\mathfrak w_{(a)}\over i(k+k')}  + \sum_{l\in \{3,5\}} \frac{1}{\lambda_l } \alpha_l\right].\]
 We then have the following estimates~:
\[
\| G\|_{L^2(\R)} \lesssim \delta \eps,\quad \| G\|_{L^\infty(\R)} \lesssim \delta \eps^2.
\]
We then define
$$\cW^1_{MF;(a)}(x, y)= (- G(x) \eps^2 \theta'(\eps^2 y), \theta(\eps^2 y) \d_xG(x), 0)$$
 for some function $\theta \in \mathcal C^\infty_0(\R)$ such that $\theta\sim 1$ in a neighbourhood of zero, with derivative $\theta'$.
	As a consequence, the first two-components of $\cW^1_{MF;(a)}$ are divergence free, and by definition $\cW^1_{MF;(a)}$ lifts the remaining trace~:
$$\cW^1_{MF;(a)|y=0}= (0, -\mathfrak W^1_{(a)} ,0)\,.$$
The explicit formula for $\cW^1_{MF;(a)}$ leads immediately to
$$\| \cW^1_{MF;(a)}\|_{L^2}\lesssim \delta \eps^2,\quad \| \cW^1_{MF;(a)}\|_{L^\infty}\lesssim \delta \eps^3\,.$$
We conclude the proof by observing that each derivative (with respect to $x,y$) actually improves the bound by a factor $O(\eps^2)$. 
We could also differentiate with respect to time, which would not change the estimates.
\end{proof}}

\medskip
\noindent
\underbar{ Step 4 : consistency of the approximation.}

Define
$$\ba 
\cW^1_{(a)}:=( \cW^1_{BL,\eps^2; (a1)}+\cW^1_{BL,\eps^2; (a2)}   + (\cW^1_{BL, \eps^3; (aII)}+\cW^1_{BL,\eps^3; (aMF)})\\ + \cW^1_{II;(a)} + \cW^1_{MF;(a)}.
\ea$$
It satisfies all the properties stated in Proposition \ref{W1a-prop}, with  the remainder
$$ r^1_{(a)} =  r^1_{(a),L}+ r^1_{(a),MF}\,.$$

\subsection*{Interactions of type (b)}

We will now turn to the  nonlinear terms which are of size $O(\eps^{1/2})$ in $L^2$, with an exponential decay of rate $O(\eps^{-3})$. More precisely, we will prove the following  result:
\begin{Prop}\label{W1b-prop}
	There exists a corrector   $\cW^1_{(b)} $ which satisfies the following properties:
\begin{itemize}
	\item[(i)] $\cW^1_{(b)}$ is a solution of the evolution equation
	\[
	\d_t \cW^1_{(b)} + \cL_\eps \cW^1_{(b)} = - \delta \times \text{(b)}  + r^1_{(b)},
		\]
	with 
	\[
	\| r^1_{(b)}\|_{L^2} \lesssim \delta \eps^2;
	\]

	\item[(ii)] $\cW^1_{(b)}$ satisfies exactly the homogeneous boundary conditions \eqref{BC};
	
	\item[(iii)]  $\cW^1_{(b)}$ can be decomposed as the sum of a mean flow  $\cW^1_{MF;(b)}$, a second harmonic $\cW^1_{II;(b)}$, and an $\eps^3$ boundary layer $\cW^1_{BL, \eps^3; (b)}$
	$$\cW^1_{(b)}= \cW^1_{BL, \eps^3; (b)} + \cW^1_{II;(b)} + \cW^1_{MF;(b)}\,.$$
	\begin{itemize}
	\item The $\eps^3$ boundary layer satisfies
	$$
	\|  \cW^1_{BL,\eps^3; (b)} \|_{L^2(\R_+^2)} \lesssim \delta \eps^{1/2},\quad 	\| \cW^1_{BL,\eps^3; (b)} \|_{L^\infty(\R_+^2)} \lesssim \delta \, .$$
	Moreover, the normal component of the velocity is smaller by a factor $O(\eps^3)$. Derivatives with respect to $t$ and $x$ are bounded, while each derivative with respect to $y$ has a cost $O(\eps^{-3})$.
	\item The second harmonic satisfies
	$$
	\|  \cW^1_{II;(b)} \|_{L^2(\R_+^2)} \lesssim \delta \eps,\quad 	\| \cW^1_{II;(b)} \|_{L^\infty(\R_+^2)} \lesssim \delta \eps^{3}\, ;$$
	\item The mean flow is much smaller
	$$\| \cW^1_{MF;(b)}\|_{H^s}\lesssim \delta \eps^2,\quad \| \cW^1_{MF;(b)}\|_{W^{s,\infty}}\lesssim \delta \eps^3\,.$$
	For these second harmonic and mean flow contributions, derivatives with respect to $t,x,y$ are bounded.
		\end{itemize}

\end{itemize}

\end{Prop}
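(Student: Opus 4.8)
The plan is to repeat, essentially verbatim, the four-step construction carried out for Proposition~\ref{W1a-prop}, with the decay rate $\eps^{-2}$ everywhere replaced by $\eps^{-3}$ and all sizes re-evaluated accordingly; so I only indicate where the argument differs.

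\textbf{Step 1: source term and leading-order particular solution.} Using \eqref{incident_wave_packet} and \eqref{def:linear-BL} I would expand the three contributions $Q(\WBLII^0,\WBLIII^0)$, $Q(\Winc,\WBLIII^0)$ and $Q(\WBLIII^0,\WBLII^0)$ as four-fold integral superpositions of modal terms $\delta\exp(ilx-i\alpha t-\mu y)(U',W',B')^{T}$, where now $\Re(\mu)\gtrsim\eps^{-3}$ (at least one factor being the $\eps^3$ boundary layer), $l$ is at distance $O(\eps^2)$ from $0$ or $\pm2k_0$, and $\alpha$ at distance $O(\eps^2)$ from $0$ or $\pm2\om_0$. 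Lemma~\ref{lem:size-W0}, together with the rule that tangential derivatives are harmless while each $\partial_y$ hitting the $\eps^3$ boundary layer costs $\eps^{-3}$, gives $U',B'=O(1)$ for the pieces coming from (b1) and (b2), $O(\eps)$ for those from (b3), and $W'$ always smaller by a factor $O(\eps^3)$. As in Step~1 of the proof of Proposition~\ref{W1a-prop}, the divergence-free constraint and the exponential decay force the normal velocity and the pressure of the solution to be $O(\eps^3)$-small, so that \eqref{eq:Leps-elem} reduces at leading order to the two-dimensional problem \eqref{eq:cV} with forcing $\cV'=(U',B')^{T}$; since $\alpha$ stays bounded away from $\pm\sin\gamma$ (it is close to $0$ or $\pm2\om_0$ and $|\om_0-\sin\gamma|=O(\eps^2)$), Lemma~\ref{V-lem} yields a particular solution of the same size and decay as the source. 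Superposing, and adding the normal component obtained by integrating the divergence-free relation (hence $O(\eps^3)$-smaller), defines an $\eps^3$ boundary layer which, by the localization argument of the remark after Lemma~\ref{lem:size-W0} (band of width $\eps^{-2}$ in $x$, scale $\eps^3$ in $y$), has $L^\infty$-size $O(\delta)$ and $L^2$-size $O(\delta\eps^{1/2})$, and which reduces the source $-\delta\times(\mathrm{b})$ to a remainder $r^1_{(b),L}$ with $\|r^1_{(b),L}\|_{L^2}\lesssim\delta\eps^2$ (the remainder gathering viscous terms, normal-component contributions, and the $O(\eps^2)$ corrections to the Leray projection).

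\textbf{Step 2: lifting the boundary conditions.} The trace on $y=0$ of the field built in Step~1 oscillates only around $\alpha=0$ and $\alpha=\pm2\om_0$, never at the critical frequency. Splitting the product of wave-packet amplitudes as $\cA_0+\cA_{II}$ exactly as in the proof of Proposition~\ref{W1a-prop}, I would lift the $\cA_{II}$-part through Lemma~\ref{lem:boundary-op-noncritical} (non-critical, $|\zeta|\gtrsim1$ near $\pm2\om_0$), producing a (possibly decaying) reflected wave — the second harmonic $\cW^1_{II;(b)}$ — together with an $\eps^3$ boundary-layer corrector; and the $\cA_0$-part through Lemma~\ref{lem:boundary-op-non-osc}, which again produces an $\eps^3$ boundary-layer corrector but restores only the $u$ and $\partial_y b$ traces, leaving a residual $w$-trace of the form $\partial_x G$ with $G\in L^2\cap L^\infty(\R)$. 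This residual is lifted by an explicit divergence-free mean flow $\cW^1_{MF;(b)}(x,y)=(-G(x)\eps^2\theta'(\eps^2 y),\theta(\eps^2 y)\partial_x G(x),0)$, as in Lemma~\ref{lem:mean-flow-a}. The term $\cW^1_{BL,\eps^3;(b)}$ is then the sum of the Step~1 boundary layer and these two corrector boundary layers. Estimating the orders of the boundary traces (which are smaller than their type-(a) counterparts), inverting the matrix $C$ of \eqref{def:C} to leading order as in Lemma~\ref{lem:second-harmonic-a}, keeping the distinction $\sin\gamma\gtrless1/2$ for the second harmonic, and using that each derivative improves the mean-flow bounds by $O(\eps^2)$, one obtains $\|\cW^1_{BL,\eps^3;(b)}\|_{L^2}\lesssim\delta\eps^{1/2}$, $\|\cW^1_{BL,\eps^3;(b)}\|_{L^\infty}\lesssim\delta$; $\|\cW^1_{II;(b)}\|_{L^2}\lesssim\delta\eps$, $\|\cW^1_{II;(b)}\|_{L^\infty}\lesssim\delta\eps^3$; and $\|\cW^1_{MF;(b)}\|_{H^s}\lesssim\delta\eps^2$, $\|\cW^1_{MF;(b)}\|_{W^{s,\infty}}\lesssim\delta\eps^3$ for all $s\in\N$, with $\d_t\cW^1_{MF;(b)}+\cL_\eps\cW^1_{MF;(b)}=r^1_{(b),MF}$ and $\|r^1_{(b),MF}\|_{L^2}\lesssim\delta\eps^2$. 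Setting $\cW^1_{(b)}:=\cW^1_{BL,\eps^3;(b)}+\cW^1_{II;(b)}+\cW^1_{MF;(b)}$ and $r^1_{(b)}:=r^1_{(b),L}+r^1_{(b),MF}$ then gives all the asserted properties, in particular the homogeneous boundary conditions \eqref{BC} by construction.

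I expect the main obstacle to be, just as for type (a), the precise bookkeeping in Step~2: one must control the orders of the boundary traces, which involve ratios of the boundary-layer rates $\Lambda_j$ to the small frequencies $k+k'$, $\om+\om'$, invert $C$ to the right order in $\eps$, and verify that the mean-flow lift through the potential $G$ is genuinely of size $\delta\eps^2$ rather than larger. An additional subtlety carried over from the type-(a) analysis is that when $\sin\gamma<1/2$ the second harmonic does not decay in $y$ at leading order, so integrability in $y$ must be recovered from the $m,m'$-integration after a normal-form change of variables, exactly as in the proof of Lemma~\ref{lem:second-harmonic-a}.
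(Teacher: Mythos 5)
There is one genuine gap, and it is exactly the point the paper singles out as the ``major difference'' between the type-(a) and type-(b) analyses. In your Step 1 you claim that, as in the type-(a) case, the modal equation \eqref{eq:Leps-elem} reduces at leading order to the inviscid two-dimensional system \eqref{eq:cV}, and you then invoke Lemma \ref{V-lem}. This fails for type-(b) sources: here the decay rate satisfies $\Re(\mu)\gtrsim \eps^{-3}$, so the vertical dissipation acting on $e^{-\mu y}$ produces $\eps^{6}\mu^{2}=O(1)$, i.e.\ the terms $\nu_0\eps^{6}\d_{yy}$ and $\kappa_0\eps^{6}\d_{yy}$ are of order one on the $\eps^{3}$ boundary-layer scale and cannot be discarded. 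If you nevertheless use the inviscid resolvent of Lemma \ref{V-lem}, the neglected viscous contribution is comparable in size to the corrector itself, namely $O(\delta)$ in $L^\infty$ and $O(\delta\eps^{1/2})$ in $L^2$, which is far larger than the required remainder $\|r^1_{(b),L}\|_{L^2}\lesssim\delta\eps^{2}$ (the paper in fact obtains $O(\delta\eps^{5/2})$). The correct step is to keep the diffusion and solve
\[
\d_t \cV+ L \cV-   \begin{pmatrix} \nu_0 & 0\\ 0 &\kappa_0\end{pmatrix}\eps^6 \d_{yy} \, \cV =  \delta \exp(i l x -i\alpha t - \mu y)\, \cV',
\]
i.e.\ invert the full matrix $M^{-1}=\begin{pmatrix} -i\alpha-\nu_0 m^2 & -\sin\gamma\\ \sin\gamma & -i\alpha-\kappa_0 m^2\end{pmatrix}$ with $m=\mu\eps^{3}$, and check that its determinant stays bounded away from zero uniformly in $\nu_0,\kappa_0$ for $\alpha$ near $0$ and $\pm2\om_0$ (this is Lemma \ref{M-lem}); the non-resonance condition $|\alpha\mp\sin\gamma|\gtrsim 1$ alone, which is all your argument uses, is not sufficient to justify dropping the $O(1)$ viscous diagonal.

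Apart from this, your plan follows the paper: the splitting of the boundary trace into $\cA_0$ and $\cA_{II}$ parts, the use of Lemmas \ref{lem:boundary-op-noncritical} and \ref{lem:boundary-op-non-osc}, the explicit mean-flow lift through the potential $G$, the inversion of the matrix $C$ of \eqref{def:C}, and the final bookkeeping (in particular $\beta_2=O(\eps^{-1})$ versus $\alpha_2=O(\eps^{-2})$, which is what makes $\cW^1_{II;(b)}$ smaller than $\cW^1_{II;(a)}$ by a factor $\eps$) are all as in the paper's Steps 2--4. One minor inaccuracy: the eigenvector entries $U',B'$ are $O(1)$ in all three interactions (b1)--(b3); the relative smallness of (b3) comes from the coefficients $a_5,a'$ and the factor $(ik'U_5-\lambda'W_5)$, not from the size of $(U',B')$.
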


  We start with a computation of $Q(\WBLII^0, \WBLIII^0)$, $Q(\Winc, \WBLIII^0)$ and $Q(\WBLIII^0, \WBLII^0) $. Using the formulas of the previous section and the leading order approximation of the Leray projector in this scaling, we obtain that 
$$\ba  Q(\WBLII^0, \WBLIII^0)
	& =\int_{\R^4}\sum_{j\in \{2,3\}}Ê \hat A \hat A' a a'_5 e^{i(k+k') x -i(\omega + \omega't) - (\lambda + \lambda_5') y} \times \\
	& \times (ik' U -\lambda_5' W) \begin{pmatrix}
		U_5'\\O(\eps^3) \\B_5'
	\end{pmatrix} dk\;dm\;dk'\;dm',
\ea$$
$$\ba  Q(\Winc, \WBLIII^0)
	& =\int_{\R^4} \hat A \hat A' a'_5 e^{i(k+k') x -i(\omega + \omega't) - (\lambda'_5 - im) y} \times \\
	& \times (ik' U_{inc} -\lambda' _5 W_{inc}) \begin{pmatrix}
		U_5'\\O(\eps^3 ) \\B_5'
	\end{pmatrix} dk\;dm\;dk'\;dm',
\ea $$
 and
 $$\ba  Q(\WBLIII^0, \WBLII^0)&=\int_{\R^4}\sum_{j' \in \{2,3\}}Ê \hat A \hat A' a_5 a'e^{i(k+k') x -i(\omega + \omega')t - (\lambda_5 + \lambda') y} \times\\
 & \times  (ik' U_5 -\lambda'  W_5) \begin{pmatrix}
		U'\\O(\eps^3) \\B'
	\end{pmatrix} dk\;dm\;dk'\;dm',\ea$$
	where we recall that the quadratic term $Q(\cW, \cW')$ has been defined in (\ref{def:Q}) and the notations $\hat{A}, a_j, U, W, U_{inc}, W_{inc}, \Winc, \cW_{BL, \varepsilon^2}^0, \cW_{BL, \varepsilon^3}^0$ have been introduced in (\ref{incident_wave_packet}), (\ref{def:linear-BL}), while $U_5, W_5$ are the tangential and the normal component of the velocity field associated to the boundary layer of decay $\exp(-y/\varepsilon^3)$ in (\ref{def:linear-BL}).
 
We have thus to study the solutions to the equation
\be
\d_t \cW + \cL_\eps \cW = \delta \exp(i l x -i\alpha t - \mu y)\begin{pmatrix}
U'\\ W' \\B'
\end{pmatrix} ,
\ee
where $\mu$ is such that $\Re(\mu)\gtrsim \eps^{-3}$, $U',B'=O(1)$, $W'=O(\eps^3)$, and $l$ (resp. $\alpha$) so located in an $\eps^2$ neighborhood of $0$, $2k_0$, or $-2k_0$ (resp. of $0$, $2\omega_0$, or $-2\omega_0$).

 The strategy will be  very similar to the one used in  the previous paragraph, with one major difference~:
   since the scaling is different,  the linear operator contains viscous dissipation in the $y$ direction at leading order.

\medskip
\noindent
\underbar{ Step 1 : approximation of the linear operator.}
At leading order, the equation then becomes
\be\label{eq:cVnu}
\d_t \cV+ L \cV-   \begin{pmatrix} \nu_0 & 0\\ 0 &\kappa_0\end{pmatrix}(\eps^6 \d_{yy} ) \cV =  \delta \exp(i l x -i\alpha t - \mu y) V' \ee
where $\cV:\R_+\times \R^2_+ \to \bC^2$, and  $V' = (U', B')^T$.

 {
\begin{lem}\label{M-lem}
There exists a matrix $M$ depending on $\alpha, \nu_0, \kappa_0$ and $m = \mu\eps^3$, but uniformly bounded with respect to all these parameters, such that 
\begin{eqnarray*}
	\cV(t,x,y)
	=\delta  e^{ilx-\mu y-i\alpha t} M V'.
\end{eqnarray*}
is a solution to (\ref{eq:cVnu}).
\end{lem}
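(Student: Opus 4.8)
The plan is to exhibit an explicit particular solution of (\ref{eq:cVnu}) having the same modal form as the source term, which reduces the PDE to a $2\times2$ algebraic system whose inverse will be the matrix $M$. First I would look for $\cV(t,x,y)=\delta\,e^{ilx-\mu y-i\alpha t}X$ with $X\in\C^2$ to be found. Since $\cL_\eps$ has constant coefficients, inserting this into (\ref{eq:cVnu}) turns $\d_t$ into $-i\alpha$ and $\d_{yy}$ into $\mu^2$; the decisive observation is that in the present scaling $\eps^6\mu^2=(\eps^3\mu)^2=m^2$, so the dissipative term becomes the bounded matrix $-m^2D$, with $D:=\begin{pmatrix}\nu_0&0\\0&\kappa_0\end{pmatrix}$. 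With $L$ as in (\ref{L-operator}), the equation collapses to
\[
\bigl(-i\alpha\,I+L-m^2D\bigr)X=V'.
\]
One is thus led to set $M:=\bigl(-i\alpha\,I+L-m^2D\bigr)^{-1}$, so that $\cV=\delta\,e^{ilx-\mu y-i\alpha t}MV'$ is an exact solution of (\ref{eq:cVnu}), with the same exponential decay $e^{-\mu y}$ and --- once $M$ is bounded --- the same size as $V'$.

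The substance of the proof is then to check that $-i\alpha\,I+L-m^2D$ is invertible, with $M$ bounded uniformly in $\alpha,\nu_0,\kappa_0,m$. A direct computation gives
\[
\det\bigl(-i\alpha\,I+L-m^2D\bigr)=(-i\alpha-m^2\nu_0)(-i\alpha-m^2\kappa_0)+\sin^2\gamma,
\]
and since, by Cramer's rule, the entries of $M$ are bounded polynomials in $\alpha,m,\nu_0,\kappa_0,\sin\gamma$ divided by this determinant, everything reduces to a uniform lower bound for $|\det|$. I would obtain this from the relevant parameter ranges: $\alpha$ lies within $O(\eps^2)$ of $0$ or of $\pm2\omega_0$, with $\omega_0=\sin\gamma$; the constants $\nu_0,\kappa_0$ are fixed, positive and comparable; and $m=\mu\eps^3$ ranges over a compact subset of the open right half-plane, since $\Re(\mu)\gtrsim\eps^{-3}$ and, for the type (b) interactions, $\mu$ is dominated by the $\eps^3$-layer rate, so that $m\to\bar\Lambda_5$ as $\eps\to0$ (with $\bar\Lambda_5^2=-i\omega_0(\nu_0+\kappa_0)/(\nu_0\kappa_0)$), and analogously for the zero- and doubled-frequency pieces. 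Substituting these limiting values into the formula for $\det$ one finds $|\det|\gtrsim\sin^2\gamma$ in each case --- the inequalities entering being $(\nu_0+\kappa_0)^2\ge4\nu_0\kappa_0$ and $\nu_0/\kappa_0+\kappa_0/\nu_0\ge2$ --- and a continuity argument propagates the bound to small $\eps$.

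The step I expect to be delicate is precisely this non-degeneracy of $\det$, uniformly as $\eps\to0$. A vanishing determinant would mean that the decay rate $\mu$ of the nonlinear source --- which is tied to the frequency $\omega_0$ of the incident wave and its linear boundary layers --- coincides at leading order with a genuine boundary-layer eigenvalue of the \emph{non-critical} linear problem at the output frequency $\alpha\in\{0,\pm2\omega_0\}$. Excluding this resonance comes down to comparing the $\eps^3$ boundary-layer roots of the $\omega_0$-problem with those of the zero- and doubled-frequency problems, whose leading asymptotics were computed in Section \ref{sec:linear_BL} and are genuinely different; the explicit determinant computation above is just a compact way of recording that fact. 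Granted the uniform bound on $M$, the assertions on the decay and the size of $\cV$ are immediate, and this $\cV$ then plays, for interactions of type (b), exactly the role that the solution of Lemma \ref{V-lem} played for interactions of type (a).
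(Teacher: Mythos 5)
Your proposal is correct and takes essentially the same route as the paper: plug the modal ansatz into \eqref{eq:cVnu}, reduce to inverting the $2\times 2$ matrix $-i\alpha I + L - m^2 D$, and bound its determinant away from zero uniformly by substituting the leading-order relation $\nu_0\kappa_0 m^2 \approx -i\omega_0(\nu_0+\kappa_0)$ together with $\alpha$ near $0$ or $\pm 2\omega_0$, which is exactly the paper's argument (the paper likewise checks $|\det(M^{-1})|\gtrsim \omega_0^2$ case by case). Your additional remarks on the resonance interpretation and the limit $m\to\bar\Lambda_5$ are consistent with the paper and do not change the substance of the proof.
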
}

\begin{proof}
We look at a solution to (\ref{eq:cVnu}) of the form 
$$ \cV(t,x,y)
	=\delta  e^{ilx-\mu y-i\alpha t}  V  \,.$$
	Plugging this Ansatz in the equation, we obtain the following system
	$$ 
	M^{-1} V := \begin{pmatrix}   -i\alpha -\nu_0 m^2 & -\sin\gamma\\ \sin \gamma & -i\alpha -\kappa_0 m^2 \end{pmatrix} V = V'\,,$$
	where $m$ is an approximate solution with positive real part to
	$$ \nu_0\kappa_0 m^2 +i \omega_0( \nu_0+\kappa_0) = 0\,,$$
	{which is the main order equation for the eigenvalues of order $\nu^{-\frac{1}{2}}=\varepsilon^{-3}$ in the critical case with small diffusivity, see section \ref{sec:linear_BL}.}
	
	The matrix $M$ is therefore defined by  
	$$ M = (\sin^2\gamma -\alpha^2 +i\alpha m^2(\nu_0+\kappa_0) +\nu_0 \kappa_0 m^4)^{-1}  \begin{pmatrix}   i\alpha -\kappa_0 m^2 & -\sin\gamma\\ \sin \gamma & i\alpha -\nu_0 m^2 \end{pmatrix}\,.$$
	We have
	$$ 
	\begin{aligned}
	\text{det}(M^{-1})&= \sin^2\gamma -\alpha^2 +i\alpha m^2(\nu_0+\kappa_0) +\nu_0 \kappa_0 m^4\\
	& \sim \omega_0^2 -\alpha^2+ {(\nu_0+\kappa_0)^2\over \nu_0 \kappa_0}  \omega_0 (\alpha - \omega_0) 
	\end{aligned}
	$$  
	In particular, since $\alpha = \pm 2\omega_0+O(\eps^2)$ or $\alpha = O(\eps^2)$, $\text{det}(M^{-1})$  is bounded away from 0. More precisely, by considering the three different values of $\alpha$, namely $\pm 2\omega_0+O(\eps^2)$, $O(\varepsilon^2)$, we can check that $|\text{det}(M^{-1})| \ge C \omega_0^2$, where the constant value $C$ is independent of $\nu_0, \kappa_0$.
	We therefore obtain a uniform bound on $M$ in all regimes we will consider.
	\end{proof}

We then define $\cV^1_{BL,\eps^3; (b)}$ by superposition~:
\begin{align*}
	\cV^1_{BL,\eps^3; (b1)}&=-\delta \int_{\R^4}\sum_{j \in \{2,3\}}Ê \hat A \hat A' a a'_5 e^{i(k+k') x -i(\omega + \omega't) - (\lambda + \lambda_5') y}\times\\ 
	& \times (ik' U -\lambda_5' W) M\begin{pmatrix}
		U_5'\\B_5'
	\end{pmatrix} dk\;dm\;dk'\;dm',
	\end{align*}
	\begin{align*}
\cV^1_{BL,\eps^3; (b2)}&=-\delta \int_{\R^4} \hat A \hat A' a'_5 e^{i(k+k') x -i(\omega + \omega't) - (\lambda'_5 - im) y}\times \\ & \times (ik' U_{inc} -\lambda' _5 W_{inc}) M\begin{pmatrix}
		U_5'\\B_5'
	\end{pmatrix} dk\;dm\;dk'\;dm',
		\end{align*}
		\begin{align*}
	\cV^1_{BL,\eps^3; (b3)}&=\int_{\R^4}\sum_{j'\in \{2,3\}}Ê \hat A \hat A' a_5 a'e^{i(k+k') x -i(\omega + \omega't) - (\lambda_5 + \lambda') y} \times \\ & \times (ik' U_5 -\lambda'  W_5) M\begin{pmatrix}
		U'\\B'
	\end{pmatrix} dk\;dm\;dk'\;dm'.
		\end{align*}
This quantity is an $\eps^3$ boundary layer term, whose size   is $O(\delta \eps^{1/2})$ in $L^2$  and $O(\delta)$ in $L^\infty$ (as the source term (b)).
We indeed recall that $a,a' = O( \eps^{-2})$ and $a_5, a'_5 = O( \eps^{-1})$.

\medskip
\noindent
\underbar{ Step 2 : restoring the divergence-free condition.}

 We then define the normal velocity of $\cW^1_{BL,\eps^2; (b1)}$, $\cW^1_{BL,\eps^2; (b2)}$ and $\cW^1_{BL,\eps^2; (b3)}$  simply by  integrating the divergence free condition. Denoting by $M_1$ (resp. $M_2$) the first (resp. the second) line of the matrix $M$, we have 
\begin{align*}
W^1_{BL,\eps^3; (b1)}&=-\delta \int_{\R^4}\sum_{j \in \{2,3\}}Ê \hat A \hat A' a a'_5 {i(k+k') \over \lambda + \lambda_5'} e^{i(k+k') x -i(\omega + \omega't) - (\lambda + \lambda_5') y}\times \\ & \times (ik' U -\lambda_5' W) M_1\begin{pmatrix}
		U_5'\\B_5'
	\end{pmatrix} dk\;dm\;dk'\;dm',
	\end{align*}
	\begin{align*}
W^1_{BL,\eps^3; (b2)}&=-\delta \int_{\R^4} \hat A \hat A' a'_5  {i(k+k') \over -im+\lambda_5'} e^{i(k+k') x -i(\omega + \omega't) - (\lambda'_5 - im) y} \times \\ & \times  (ik' U_{inc} -\lambda' _5 W_{inc}) M_1\begin{pmatrix}
		U_5'\\B_5'
	\end{pmatrix} dk\;dm\;dk'\;dm',
		\end{align*}
		\begin{align*}
	W^1_{BL,\eps^3; (b3)}&=\int_{\R^4}\sum_{j'\in \{2,3\}}Ê \hat A \hat A' a_5 a'  {i(k+k') \over \lambda' + \lambda_5}e^{i(k+k') x -i(\omega + \omega't) - (\lambda_5 + \lambda') y} \times \\ & \times (ik' U_5 -\lambda'  W_5) M_1\begin{pmatrix}
		U'\\B'
	\end{pmatrix} dk\;dm\;dk'\;dm'	
	\end{align*}
The size of $W^1_{BL,\eps^3; (bj)}$ is $O(\delta \eps^{7/2})$ in $L^2$  and $O(\delta \eps^3)$ in $L^\infty$.

Note that $\cW_{BL,\eps^3;(b)}^1$ is a superposition of vectors of the form
$$\begin{pmatrix}
	M_1 \cV\\ \frac{i(k+k')}{\lambda + \lambda_5'} M_1 \cV\\ M_2 \cV
\end{pmatrix},\quad\hbox{ and }Ê \begin{pmatrix}
	M_1 \cV\\ \frac{i(k+k')}{-im+ \lambda_5'} M_1 \cV\\ M_2 \cV
\end{pmatrix}\,.$$

By construction, $\sum_{j=1}^3\cW_{BL,\eps^3;(bj)}^1$ is divergence free, and it can be easily checked that it is a solution of
\[
\begin{aligned}
\d_t \left( \sum_{j=1}^3\cW_{BL,\eps^3;(bj)}^1 \right) & + \cL_\eps \left( \sum_{j=1}^3\cW_{BL,\eps^3;(bj)}^1\right)\\
 & = - \delta Q(\WBLII^0, \WBLIII^0)-\delta Q(\Winc, \WBLIII^0)\\
 &\quad \, -\delta Q(\WBLIII^0, \WBLII^0) +r^1_{(b),L},
\end{aligned}
\]
where the remainder $r^1_{(b),L}$ (coming from the terms which have been neglected in $\cL_\eps$, i.e. viscous terms $\d_{xx}$, terms involving $W^1_{BL,\eps^3; (b)}$ and correctors of order $O(\eps^3)$ in the Leray projection) satisfies 
$$\|r^1_{(b),L}\|_{L^2}= O(\delta \eps^{5/2}).$$

\medskip
\noindent
\underbar{ Step 3 : Lifting the boundary conditions.} Now, $\cW_{BL,\eps^3;(bj)}^1 $ has a  non-zero trace on the boundary, which must be lifted.
Note that this trace has exactly the same structure  as in the previous case, namely
\begin{eqnarray*}
	&& \sum_{j=1}^3\begin{pmatrix}
	 U^1_{BL, \eps^3; (bj)}\\  W^1_{BL, \eps^3; (bk)}\\ \d_ y  B^1_{BL, \eps^3; (bj)}
	\end{pmatrix}_{|y=0}\\
 &=& -\delta \int_{\R^4}\left(\cA_0 + \cA_{II} \right)  e^{i(k+k') x - i (\om + \om')t} \begin{pmatrix}
 	\mathfrak u_{(b)} \\ \mathfrak w_{(b)} \\ \mathfrak b_{(b)}
 \end{pmatrix}dk \:dk'\:dm\: dm',
\end{eqnarray*}
where
\begin{eqnarray*}
\begin{pmatrix}
	\mathfrak u_{(b)} \\ \mathfrak w_{(b)} \\ \mathfrak b_{(b)}
\end{pmatrix}&=& \sum_{j \in \{2,3\}}Ê  a a'_5 (ik' U -\lambda_5' W) \begin{pmatrix}
	M_1\\ \frac{i(k+k')}{\lambda + \lambda_5'} M_1\\ -(\lambda + \lambda_5')M_2
\end{pmatrix}(U_5',B_5')^t\\
& &+ \; a'_5 (ik' U_{inc} -\lambda_5' W_{inc}) \begin{pmatrix}
	M_1\\ \frac{i(k+k')}{-im + \lambda_5'} M_1\\ -(-im + \lambda_5')M_2
\end{pmatrix}(U_5',B_5')^t\\\
& & + \sum_{j'\in \{2,3\}}Êa_5 a' (ik' U_5 -\lambda' W_5) \begin{pmatrix}
	M_1\\ \frac{i(k+k')}{\lambda_5 + \lambda'} M_1\\ -(\lambda_5 + \lambda')M_2
\end{pmatrix}(U',B')^t \\
&=& O(\eps^{-4})  \begin{pmatrix}
	M_1\\ \frac{i(k+k')}{\lambda_5 + \lambda'} M_1\\ O(\eps^{-3})
\end{pmatrix}(U',B')^t + 
 O(\eps^{-4})  \begin{pmatrix}
	M_1\\ \frac{i(k+k')}{-im+ \lambda_5'} M_1\\ O(\eps^{-3})
\end{pmatrix}(U_5',B_5')^t \\
& & + O(\eps^{-4})  \begin{pmatrix}
	M_1\\ \frac{i(k+k')}{\lambda_5' + \lambda} M_1\\ O(\eps^{-3})
\end{pmatrix}(U_5',B_5')^t.
\end{eqnarray*}

We now lift the contributions of $\cA_0$ and $\cA_{II}$ exactly as in the previous paragraph.
 Using	Lemma \ref{lem:boundary-op-noncritical}, we first define the boundary layer corrector associated with $\cA_{II}$
\[
\begin{aligned}
\cW_{BL,\eps^3;(bII)}^1 &= \delta  \int_{\R^4}\cA_{II} (k,k',m,m') \cB^{nc,BL}_{\omega + \omega', k+ k'}[
\mathfrak u_{(b)} , \mathfrak w_{(b)} ,\mathfrak b_{(b)}
]dk\:dk'\:dm\:dm' \\
& = \delta \sum_{j=3,5}  \int_{\R^4}\cA_{II} (k,k',m,m')\beta_j  e^{i((k+k') x-(\omega+\omega')  t)-\Lambda_j y} dk\:dk'\:dm\:dm'
\end{aligned}
\]
and the second harmonic flow associated with the term $(b)$
\[
\begin{aligned}
\cW^1_{II;(b)}&:= \delta  \int_{\R^4}\cA_{II}(k,k',m,m') \cB^{nc,RW}_{\omega + \omega', k+ k'} [
\mathfrak u_{(b)} , \mathfrak w_{(b)} ,\mathfrak b_{(b)}
]dk\:dk'\:dm\:dm'\\
&=\delta   \int_{\R^4}\cA_{II} (k,k',m,m')\beta_2  e^{i((k+k') x-(\omega+\omega')  t)- \Lambda_2 y}dk\:dk'\:dm\:dm'.
\end{aligned}\]
where $ \Lambda_2,  \Lambda_3,  \Lambda_5$ denote the roots with positive real parts of the determinant of the matrix $A_{\nu, \kappa} (\omega+\omega', k+k', \Lambda_j)$ defined in (\ref{def:matrix-A}), associated to $\omega+\omega', k+k'$.
The coefficients $\beta_j$ are defined by
$$\begin{pmatrix} \beta_2\\ \beta_3\\\beta_5\end{pmatrix} = C^{-1} \begin{pmatrix} \mathfrak u_{(b)} \\ \mathfrak w_{(b)} \\ -\frac{\mathfrak b_{(b)}}{\sin \gamma}\end{pmatrix} $$
where the matrix $C$ is defined in \eqref{def:C}, and satisfies
\begin{align*}
C^{-1}&\sim O(\eps^3)  \begin{pmatrix}\frac{\tilde \Lambda_5}{\Lambda_3} - \frac{\tilde \Lambda_3 }{\Lambda_5} &\tilde \Lambda_3 - \tilde \Lambda_5&\frac1{\Lambda_5} - \frac1{\Lambda_3}\\ -\frac{\tilde \Lambda_5}{\Lambda_2}  &\tilde \Lambda_5&\frac1{\Lambda_2} \\
\frac{\tilde \Lambda_3}{\Lambda_2}  &-\tilde \Lambda_3&-\frac1{\Lambda_2} \end{pmatrix}\\\\
&=O(\eps^3)  \begin{pmatrix} O(1) & O(\varepsilon^{-3}) & O(\varepsilon^3) \\
O(\varepsilon^{-3}) &  O(\varepsilon^{-3}) &  O(1) \\
O(\varepsilon^{-3}) &  O(\varepsilon^{-3}) & O(1)
\end{pmatrix}.
\end{align*} 
As a consequence,
$$\begin{pmatrix} \beta_2\\ \beta_3\\\beta_5\end{pmatrix} 
= \begin{pmatrix} O(\eps^{-1} ) \\ O(\eps^{-4})  \\ O(\eps^{-4})\end{pmatrix}.$$ 

Note that $\beta_3$ and $\beta_5$ are of the same size as $\alpha_3$ and $\alpha_5$. However, $\beta_2=O(\eps^{-1})$ is a priori smaller than $\alpha_2=O(\eps^{-2}).$ As a consequence, the estimate on $\cW_{BL,\eps^3;(bII)}^1$ is exactly the same as the one on $\cW_{BL,\eps^3;(aII)}^1$, but the estimate on $\cW^1_{II;(b)}$ is smaller than the one on $\cW^1_{II;(a)}$ by a factor $\eps$.

We then define the boundary layer part due to $\cA_0$, namely
\[
\cW^1_{BL,\eps^3; (bMF)}  = \delta  \int_{\R^4}\cA_{0 }(k,k',m,m') \cB^{no,BL}_{\omega + \omega', k+ k'}[
	\mathfrak u_{(b)} ,\mathfrak b_{(b)}
	]dk\:dk'\:dm\:dm'.
	\]
We  finally lift the remaining trace on the $w$ component by the mean flow term
$$\cW^1_{MF;(b)}(x, y)= (- G(x) \eps^2 \theta'(\eps^2 y), \theta(\eps^2 y) \d_xG(x), 0)$$
where  
 \[G := \delta  \int_{\R^4}\cA_{0 }e^{i(k+k') x - i (\om + \om')t}\left[ - {\mathfrak w_{(b)}\over i(k+k')}  + \sum_{l\in \{3,5\}} \frac{1}{\Lambda_l } \bar \beta_l\right],\]
and 
\[
\begin{pmatrix}\bar  \beta_3\\ \bar \beta_5 \end{pmatrix}  = \begin{pmatrix} 1& 1 \\ 
\tilde \Lambda_3 
&\tilde \Lambda_5  \end{pmatrix} ^{-1} \begin{pmatrix}  \mathfrak u_{(b)}\\ -\frac{\mathfrak b_{(b)}}{\sin \gamma}\end{pmatrix} = \begin{pmatrix} O(\eps^{-4}) \\
O(\eps^{-4}) \end{pmatrix} \]
denoting
$$\tilde \Lambda_j = -\Lambda_j \left[\frac{\sin \gamma +\frac{i(k+k')}{\Lambda_j}\cos \gamma}{i(\om +\om') - \kappa ((k+k') ^2-\Lambda_j^2)}\right] = O\left(  \eps^{-3}\right). $$
We then have exactly the same estimates as for the interactions of type (a).

\medskip
\noindent
\underbar{ Step 4 : consistency of the approximation.}

Define
$$\ba
\cW^1_{(b)}:=\Big( \cW^1_{BL,\eps^3; (b1)}+\cW^1_{BL,\eps^3; (b2)}  \cW^1_{BL,\eps^3; (b3)} + \cW^1_{BL, \eps^3; (bII)}+\cW^1_{BL,\eps^3; (bMF)}\Big)\\ + \cW^1_{II;(b)} + \cW^1_{MF;(b)}.
\ea$$
It satisfies all the properties stated in Proposition \ref{W1b-prop}, with  the remainder
$$ r^1_{(b)} =  r^1_{(b),L}+ r^1_{(b),MF}\,.$$


\section{Accuracy of the approximation}
\label{sec:stability}
The aim of this section is to quantify the accuracy of our construction, by providing an $L^2$ estimate of the difference between the approximate solution and the solution to the Cauchy problem for (\ref{eq:NL-compact})-(\ref{BC}) with initial data
$$\mathcal{W}_0=\cW_{app}(t=0)=\mathcal{W}^0(t=0)+\mathcal{W}^1(t=0),$$
where $\mathcal{W}^0, \mathcal{W}^1$ have been defined in (\ref{def:W0}) and Proposition \ref{prop:W1} respectively.

\subsection{The error estimate}
The first step is to estimate the size of the error in the approximation of the evolution equation, coming from terms of different types \,:\\
$\bullet$ the viscosity for the incident wave packet $\Winc$;\\
$\bullet$ the remainder $r^1$ in Proposition \ref{prop:W1} (corresponding to the error coming from $\cW^1$);\\
$\bullet$ the quadratic interactions (c) in Table \ref{Tab:interactions} that have not been corrected;\\
$\bullet$ the interactions between $\cW^0$ and $\cW^1$ and the self-interactions of $\cW^1$.

\begin{Prop}\label{prop:error-estimate}
Consider $\mathcal{W}_{app}=\mathcal{W}^0+\mathcal{W}^1,$
with $\mathcal{W}^0, \mathcal{W}^1$ defined in (\ref{def:W0}) and Proposition \ref{prop:W1} respectively. Then $\mathcal{W}_{app}$ solves 
$$\partial_t \mathcal{W}_{app} + \mathcal{L}_\varepsilon \mathcal{W}_{app}+\delta Q(\mathcal{W}_{app}, \mathcal{W}_{app})=R_{app}$$
and satisfies exactly the boundary conditions (\ref{BC}), where 
$$R_{app}:=r^1+\delta \times (c) + \delta \Bigg(Q(\mathcal{W}^0, \mathcal{W}^1)+Q(\mathcal{W}^1, \mathcal{W}^0)+Q(\mathcal{W}^1, \mathcal{W}^1)\Bigg)-\begin{pmatrix}
\nu_0 \varepsilon^6 \Delta u^0_{inc}\\
\nu_0 \varepsilon^6 \Delta w^0_{inc}\\
\kappa_0 \varepsilon^6 \Delta b^0_{inc}\\
\end{pmatrix}.$$
Moreover,
\begin{itemize}
\item $\|\mathcal{W}_{app}\|_{L^2(\mathbb{R}_+^2)}= O(1), \quad \|\cW_{app}\|_{L^\infty(\R^2_+)} = O(1);$\\
\item  $\|\nabla \mathcal{W}_{app}\|_{L^2(\mathbb{R}_+^2)} = O(\varepsilon^{-2}), \quad \| \nabla \cW_{app}\|_{L^\infty(\R^2_+)} = O(\varepsilon^{-2});$\\
\item $\|R_{app}\|_{L^2(\mathbb{R}_+^2)} = O(\delta \varepsilon^2)+O(\delta^2)+O(\varepsilon^6)$.
\end{itemize}
\end{Prop}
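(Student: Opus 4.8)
The plan is to verify the three assertions in order, relying on the precise size estimates already established in Lemma \ref{lem:size-W0} and Proposition \ref{prop:W1} for the individual pieces of $\cW^0$ and $\cW^1$. First, the identity for $R_{app}$ is a purely algebraic bookkeeping step: by construction $\cW^0$ solves $\d_t\cW^0 + \cL_\eps \cW^0 = -(\text{viscous error on }\Winc)$ (the boundary layer pieces $\WBLII^0,\WBLIII^0$ being \emph{exact} solutions of the linear system with the traces that cancel those of $\Winc$, up to the diffusion acting on $\Winc$ itself), and Proposition \ref{prop:W1} gives $\d_t\cW^1 + \cL_\eps\cW^1 = -\delta Q(\cW^0,\cW^0) + r^1$. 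Adding these, using bilinearity of $Q$ to expand $\delta Q(\cW_{app},\cW_{app}) = \delta Q(\cW^0,\cW^0) + \delta(Q(\cW^0,\cW^1)+Q(\cW^1,\cW^0)+Q(\cW^1,\cW^1))$, and recalling that the type-(c) interactions in Table \ref{Tab:interactions} were deliberately \emph{not} corrected (hence appear in the remainder as $\delta\times(c)$), gives exactly the stated expression. The boundary conditions hold exactly because every corrector was designed in Section \ref{sec:NL-BL} to lift its trace, and $\Winc$ contributes no trace issue once combined with $\cW_{BL}$ — this is recorded in Proposition \ref{prop:W1}.

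Next, the $L^2$ and $L^\infty$ bounds on $\cW_{app}$ and $\nabla\cW_{app}$. By the triangle inequality it suffices to sum the bounds term by term. From Lemma \ref{lem:size-W0}, $\|\cW^0\|_{L^2}, \|\cW^0\|_{L^\infty} = O(1)$; from Proposition \ref{prop:W1}, each piece of $\cW^1$ is $O(\delta)$ or smaller in both norms, and since $\delta\eps^{-2}\ll 1$ in the regime considered, $\cW^1$ is lower order, so $\|\cW_{app}\|_{L^2}, \|\cW_{app}\|_{L^\infty} = O(1)$. For the gradient, the incident wave packet and the mean-flow/second-harmonic contributions have bounded derivatives, but each $\d_y$ derivative costs $O(\eps^{-2})$ on the $\eps^2$ boundary layer and $O(\eps^{-3})$ on the $\eps^3$ boundary layer. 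One must check that the $\eps^3$ layer, though it loses $\eps^{-3}$ per $y$-derivative, has $L^2$ size $O(\delta\eps^{1/2})$ (resp. $O(\eps^{3/2})$ for $\cW^0$), so $\|\d_y(\cdot)\|_{L^2} = O(\eps^{3/2}\cdot\eps^{-3}) = O(\eps^{-3/2})$, which is actually \emph{larger} than $O(\eps^{-2})$ — wait, $\eps^{-3/2} < \eps^{-2}$, so it is fine. The dominant contribution to $\|\nabla\cW_{app}\|_{L^2}$ is $\|\d_y\WBLII^0\|_{L^2} = O(1)\cdot O(\eps^{-2}) = O(\eps^{-2})$, and similarly for $L^\infty$. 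This confirms the second bullet.

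Finally, the bound on $\|R_{app}\|_{L^2}$. The term $r^1$ is $O(\delta\eps^2)$ by Proposition \ref{prop:W1}. The uncorrected interactions $\delta\times(c)$ are $O(\delta\eps^2)$ since the largest type-(c) term, $Q(\WBLII^0,\Winc)$ and $Q(\Winc,\Winc)$, are $O(\eps^2)$ in $L^2$ by Table \ref{Tab:interactions}. The viscous term on the incident wave is $\eps^6\Delta\Winc$, and since $\Winc$ has bounded derivatives (no boundary layer), this is $O(\eps^6)$ in $L^2$. The genuinely delicate part — and the step I expect to be the main obstacle — is estimating the cross terms $\delta(Q(\cW^0,\cW^1)+Q(\cW^1,\cW^0)+Q(\cW^1,\cW^1))$: here one must pair up a $\cW^1$-factor of size $O(\delta)$ (typically with a fast decay rate $\eps^{-2}$ or $\eps^{-3}$ that saturates on differentiation) against a $\cW^0$-factor, and track how the product's $L^2$ norm benefits from the localization of the boundary layers in a band of width $O(1)$ or $O(\eps)$. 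A careful accounting, analogous to the derivation of Table \ref{Tab:interactions} but with one boundary-layer factor replaced by its order-$\delta$ corrector, shows each such term is $O(\delta^2)$ in $L^2$ (the worst case being $Q(\WBLII^0,\WBLII^1)$ and its symmetric counterpart, bounded by $\delta\cdot\|\WBLII^0\|_{L^\infty}\cdot\|\d_y\WBLII^1\|_{L^2}\cdot\delta$-type combinations that must be checked to close at $O(\delta^2)$ after exploiting the thin support). Collecting all contributions yields $\|R_{app}\|_{L^2} = O(\delta\eps^2) + O(\delta^2) + O(\eps^6)$, as claimed.
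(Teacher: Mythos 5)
The skeleton of your argument (the algebraic bookkeeping giving $R_{app}$, the exact boundary conditions by construction, the term-by-term summation of the bounds of Lemma \ref{lem:size-W0} and Proposition \ref{prop:W1}, and the splitting of $R_{app}$ into $r^1$, $\delta\times(c)$, the viscous error on $\Winc$ and the cross terms) coincides with the paper's proof, and your bounds for $r^1$, $\delta\times (c)$, $\eps^6\Delta \Winc$, and for $\cW_{app}$, $\nabla \cW_{app}$ are the paper's. However, at the one step you yourself flag as delicate --- the cross terms --- your estimate does not close, and the mechanism you invoke is not the right one. Writing $Q(\cW^0,\cW^1)=\bP\bigl((u^0\d_x+w^0\d_y)\cW^1\bigr)$, the paper bounds
\[
\delta\|Q(\cW^0,\cW^1)\|_{L^2}\leq \delta\Bigl(\|u^0\|_{L^\infty}\|\d_x\cW^1\|_{L^2}+\|w^0\|_{L^\infty}\|\d_y\cW^1\|_{L^2}\Bigr),
\]
and the crucial structural point is that only the \emph{normal} velocity multiplies the singular derivative $\d_y$: by Lemma \ref{lem:size-W0} the normal components of $\WBLII^0$ and $\WBLIII^0$ are smaller by factors $O(\eps^2)$ and $O(\eps^3)$ respectively, and $\Winc=O(\eps^2)$ in $L^\infty$, so $\|w^0\|_{L^\infty}=O(\eps^2)$, which compensates the $O(\eps^{-2})$ cost of $\d_y$ acting on $\WBLII^1$; symmetrically, $Q(\cW^1,\cW^0)$ is handled with $\|w^1\|_{L^\infty}=O(\delta\eps^2)$ against $\|\d_y\cW^0\|_{L^2}=O(\eps^{-2})$. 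Your schematic bound $\delta\,\|\WBLII^0\|_{L^\infty}\,\|\d_y\WBLII^1\|_{L^2}$ uses the full $L^\infty$ norm of the layer, which is $O(1)$, and therefore yields $O(\delta^2\eps^{-2})$ rather than $O(\delta^2)$; and ``exploiting the thin support'' cannot recover the loss, since the localization of the boundary layers is already priced into the $L^2$ norms of $\WBLII^1$ and its derivatives that you are using. Without the observation that incompressibility and the boundary conditions make the normal velocities anomalously small (precisely the refinements recorded in Lemma \ref{lem:size-W0} and Proposition \ref{prop:W1}), the claimed $O(\delta^2)$ bound for $\delta Q(\cW^0,\cW^1)+\delta Q(\cW^1,\cW^0)$ is not justified by your plan; this is the missing idea.

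The remaining items are fine and essentially identical to the paper: $Q(\cW^1,\cW^1)$ contributes $O(\delta^3)$ (so your weaker $O(\delta^2)$ is harmless), and for the gradient bounds the dominant contribution is indeed $\d_y\WBLII^0$, your check that the $\eps^3$ layers do not exceed $O(\eps^{-2})$ being correct under the standing smallness assumption on $\delta$.
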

\begin{proof}
The consistency of $\cW_{app}$ can be easily checked by recalling the equations satisfied by $\cW^0, \cW^1$ in Lemma \ref{lem:size-W0} and Proposition \ref{prop:W1} respectively. Again, the sizes of $\cW_{app}$ and $\nabla \cW_{app}$ are a direct consequence of Lemma \ref{lem:size-W0} and Proposition \ref{prop:W1}. Let us estimate the size of the remainder.\\
$\bullet$ From Proposition \ref{prop:W1}, $\|r^1\|_{L^2(\mathbb{R}^2_+)}=O(\delta
\varepsilon^2)$; \\
$\bullet$ From Table \ref{Tab:interactions}, $\|\delta \times (c) \|_{L^2(\mathbb{R}^2_+)}=O(\delta \varepsilon^2)$; \\
$\bullet$ From Lemma \ref{lem:size-W0}, the size of the remainder due to the viscosity applied to the incident wave packet is $O(\varepsilon^6)$; \\
$\bullet$ It remains then to estimate the interactions between $\cW_{app}$ and $\cW^1$:
\begin{align*}
&\delta \| Q(\cW^0, \cW^1)\|_{L^{2}(\R^2_+)} \le \delta \|(u^0 \partial_x + w^0 \partial_y)\cW^1\|_{L^{2}(\R^2_+)}\\
& \le \delta \Bigg(\|u^0\|_{L^\infty} \|\partial_x \cW^1\|_{L^2} + \|w^0\|_{L^\infty} \|\partial_y \cW^1\|_{L^2}\Bigg) = O(\delta^2);\\
\end{align*}
\begin{align*}
& \delta \| Q(\cW^1, \cW^0)\|_{L^{2}(\R^2_+)} \le \delta \|(u^1 \partial_x + w^1 \partial_y)\cW^0\|_{L^{2}(\R^2_+)}\\
& \le \delta \Bigg(\|u^1\|_{L^\infty} \|\partial_x \cW^0\|_{L^2} + \|w^1\|_{L^\infty} \|\partial_y \cW^0\|_{L^2}\Bigg) = O(\delta^2);
\end{align*}
\begin{align*}
\delta \| Q(\cW^1, \cW^1)\|_{L^{2}(\R^2_+)}=O(\delta^3).
\end{align*}
\end{proof}

\begin{Rmk}
	It is natural to wonder whether we could iterate our construction in order to define an approximate solution up to any order. In other words, given $N\in \N$ arbitrary, is it possible to define a solution $\mathcal{W}_{app}$ (different from the solution defined above) such that
	\[
	\partial_t \mathcal{W}_{app} + \mathcal{L}_\varepsilon \mathcal{W}_{app}+\delta Q(\mathcal{W}_{app}, \mathcal{W}_{app})=O(\eps^N)\quad \text{in } L^\infty (\R_+,L^2(\R^2_+))?
	\]
	There are several limitations preventing us to do so:
	\begin{itemize}
	\item First, because of the degeneracy of the root $\lambda_2(k+k',\omega + \omega')$ in the region $(k+k', m+m')\in \mathrm{Supp }\cA_0$, we have chosen to lift the traces $\mathfrak{W}^{1}_{(a)} $ and  $\mathfrak{W}^{1}_{(b)}$  by correctors that have no physical relevance, namely $\cW^1_{MF;(a)}$ and $\cW^1_{MF;(b)}$. In order to reach a better approximation, we need to understand how to lift these traces in a more specific way, which possibly entails tracking down the exact degeneracy of $\lambda_2$.
	
	\item In a similar fashion, in the above construction, we have treated $(I-\mathbb P) ((u^0\d_x + w^0 \d_y) \cW^0)$ as a remainder. If we want to go further in the definition of the approximate solution, we need to understand the precise structure of this remainder and to correct it by additional terms.
	
	\item The next step in the construction of an approximate solution would be to consider the remainder terms stemming from the interactions between $\cW^0$ and $\cW^1$, namely $\delta Q(\cW^0, \cW^1)$ and $\delta Q(\cW^1, \cW^0)$. One could try and apply the same method as above in order to add a corrector $\cW^2$, satisfying
	\[
	\d_t \cW^2 + \cL_\eps \cW_2 = - \delta Q(\cW^0, \cW^1)- \delta Q(\cW^1, \cW^0) + r^2,
	\] 
	with $\| r^2\|_{L^2}\ll \delta^2$.
	
	However, there is a major difference with the case of the self-interactions of $\cW^0$ (i.e. the term $Q(\cW^0, \cW^0)$). Indeed, the term $\cW^0$ oscillates in time around the frequencies $\pm \omega_0$; whence $Q(\cW^0, \cW^0)$ oscillates around the frequencies $\pm 2 \omega_0$ and $0$. Since the resonance frequencies of the rotation operator $L$ are $\pm \omega_0$, the definition of $\cW^1$ does not create any resonance, see Lemma \ref{V-lem}. But the terms $Q(\cW^0, \cW^1)$ and $Q(\cW^1, \cW^0)$ contain the frequencies $\pm 3 \omega_0, \pm \omega_0$, and therefore resonances may occur.
	
	A more careful look at the quadratic terms shows that in fact, the resonant interactions between $\Winc+ \WBLII^0$ and $\WBLII^1$ cancel at main order. This is due to the fact that the eigenvectors $(U_{inc}, B_{inc})$ and $(U_\lambda, B_\lambda)$ are both $(1,\pm i) + O(\eps^2)$ when $|\lambda|\propto \eps^{-2}$. In more abstract terms, this cancellation comes from the Jacobi identity.
	However, this algebraic cancellation no longer holds for the terms involving the $\eps^3$ boundary layer. As a consequence, we cannot lift these interactions with the tools developed above, and new ideas must be introduced if one wants to push the iteration further.

	\end{itemize}

\end{Rmk}

\subsection{The stability inequality}\label{subsec:stability-inequality}
Here we will establish the stability inequality (\ref{energy-inequality}) leading to Theorem \ref{Thm:stability}.\\
Recalling the definitions of $\cL_\varepsilon$ and $Q(\cW, \cW')$ in (\ref{eq:NL-compact}), the approximate solution satisfies the following equation
$$\partial_t \cW_{app} +  \mathbb{P}\mathbf{L} \cW_{app} + \delta \mathbb{P}((u_{app}\partial_x + w_{app} \partial_y) \cW_{app})=\begin{pmatrix}
\nu_0 \varepsilon^6 \Delta u_{app}\\
\nu_0 \varepsilon^6 \Delta w_{app}\\
\kappa_0 \varepsilon^6 \Delta b_{app}\\
\end{pmatrix}+R_{app},$$
where 
$$\mathbf{L}=\begin{pmatrix}
0 & 0 & -\sin \gamma\\
0 & 0 & -\cos \gamma \\
\sin \gamma & \cos \gamma & 0 
\end{pmatrix}.$$

We point out that the strong formulation of the system, which is required in order to get the energy estimate leading to the stability inequality, is actually satisfied by a sequence approximate solutions,  which are smooth by Friedrichs approximation, see the Appendix. Then the energy inequality for the weak solution $\cW$ is obtained by passing to the limit. With a slight abuse of notation, here we omit this passage and we consider directly the equation for $\cW$.\\
We write the equation for the difference $\cW_{app}-\cW$ and take the scalar product against  $\cW_{app}-\cW$. This yields:
\be \ba\label{equation:stability}
\frac{1}{2}\frac{d}{dt}\|\cW_{app}-\cW\|_{L^2(\R^2_+)}^2+ \int_{\mathbb{R}^2} \mathbb{P}\mathbf{L}(\cW_{app}-\cW) (\cW_{app}-\cW)  \, dx \, dy
\\ + \delta \int_{\mathbb{R}^2} \mathbb{P}((u_{app}-u)\partial_x \cW_{app} +(w_{app}-w)\partial_y \cW_{app} ) (\cW_{app}-\cW) \, dx \, dy \\
+ \delta \int_{\mathbb{R}^2} \mathbb{P}((u\partial_x (\cW_{app}-\cW) + w \partial_y (\cW_{app}-\cW) ) (\cW_{app}-\cW) \, dx \, dy \\
= \varepsilon^6 \nu_0 \int_{\mathbb{R}^2} \Delta (u_{app}-u) (u_{app}-u) + \Delta (w_{app}-w) (w_{app}-w)\, dx \, dy \\
+ \varepsilon^6 \kappa_0 \int_{\mathbb{R}^2} \Delta (b_{app}-b) (b_{app}-b) \, dx \, dy
+\int_{\mathbb{R}^2} R_{app}  (\cW_{app}-\cW) \, dx \, dy.
\ea \ee 

We analyse each term separately.
\begin{align*}\bullet \quad
(\mathbb{P}\mathbf{L}(\cW_{app}-\cW), \, \cW_{app}-\cW)_{L^2} = (\mathbf{L}(\cW_{app}-\cW), \, \mathbb{P}(\cW_{app}-\cW))_{L^2} \\
= (\mathbf{L}(\cW_{app}-\cW), \, \cW_{app}-\cW)_{L^2}=0,
\end{align*} 
since $\mathbb{P}$ is symmetric by definition and $\mathbf{L}$ is skew-symmetric.
\begin{align*} \bullet \quad
\delta |(\mathbb{P}((u_{app}-u)\partial_x \cW_{app} +(w_{app}-w)\partial_y \cW_{app},\, \cW_{app}-\cW)_{L^2}| \\
\le \delta \|\mathbb{P}((u_{app}-u)\partial_x \cW_{app} +(w_{app}-w)\partial_y \cW_{app})\|_{L^2} \|\cW_{app}-\cW\|_{L^2}\\
\le \delta \|(u_{app}-u)\partial_x \cW_{app} +(w_{app}-w)\partial_y \cW_{app}\|_{L^2} \|\cW_{app}-\cW\|_{L^2}\\
\le \delta \|\nabla \cW_{app}\|_{L^\infty} \|\cW_{app}-\cW\|_{L^2}^2 \\
\le \delta \varepsilon^{-2}  \|\cW_{app}-\cW\|_{L^2}^2, \\
\end{align*}
where $\mathbb{P}$ is bounded in $L^2$ by definition and the last inequality follows from Proposition \ref{prop:error-estimate}.\\

$\bullet$ Because of the symmetry of $\mathbb{P}$, by integrating by parts the third integral reads
\begin{align*}
-\frac{\delta}{2} \int_\mathbb{R} w|_{y=0} (\cW_{app}-\cW)^2|_{y=0} \, dx - \frac{\delta}{2} \int_{\mathbb{R}^2_+} (\partial_x u + \partial_y w) (\cW_{app}-\cW)^2 \, dx \, dy,
\end{align*}
which is zero thanks to the zero trace of the velocity field and the divergence free condition.\\

$\bullet$ Integrating by part the term with the Laplacian, one gets 
\begin{align*}
-\varepsilon^6 \nu_0\int_\mathbb{R} \partial_y(u_{app}-u)|_{y=0}(u_{app}-u)|_{y=0} + \partial_y (w_{app}-w)|_{y=0} (w_{app}-w)|_{y=0}  \, dx\\
-\varepsilon^6 \kappa_0 \int_\mathbb{R}\partial_y(b_{app}-b)|_{y=0}(b_{app}-b)|_{y=0} \, dx \\
-\nu_0\varepsilon^6 (\|\nabla(u_{app}-u)\|_{L^2}^2+\|\nabla(w_{app}-w)\|_{L^2}^2)-\kappa_0 \varepsilon^6 \|\nabla(b_{app}-b)\|_{L^2}^2\\
= -\nu_0\varepsilon^6 (\|\nabla(u_{app}-u)\|_{L^2}^2+\|\nabla(w_{app}-w)\|_{L^2}^2)-\kappa_0 \varepsilon^6 \|\nabla(b_{app}-b)\|_{L^2}^2,
\end{align*}
where the last equality follows from the boundary conditions (\ref{BC}).\\

$\bullet $ It remains to deal with the remainder
$$|(R_{app}, \cW_{app}-\cW)_{L^2}| \le \|R_{app}\|^2_{L^2} + \|\cW_{app}-\cW\|_{L^2}^2 \le \delta^2 \varepsilon^4 + \|\cW_{app}-\cW\|_{L^2}^2.$$
The Gronwall inequality yields the result.

\begin{Rmk}
We point out that there is an alternative way to establish the stability inequality, by estimating the last term in the following way:
\begin{align*}
|(R_{app}, \cW_{app}-\cW)_{L^2}| \le \varepsilon^2 \delta^{-1} \|R_{app}\|^2_{L^2} + \delta \varepsilon^{-2} \|\cW_{app}-\cW\|_{L^2}^2 \\
\le  \delta \varepsilon^{-2} \|\cW_{app}-\cW\|_{L^2}^2 + \delta \varepsilon^6.
\end{align*}
This yields 
\begin{equation*}
\|(\cW_{app}-\cW)(t)\|_{L^2(\R^2_+)} \le \delta^\frac{1}{2} \varepsilon^3 \exp((\delta \varepsilon^{-2} t).
\end{equation*}
Note that this last version of the stability estimate allows us to have a stable approximate solution for longer times, of order $\varepsilon^{-1}$, provided that $\delta \le \varepsilon^3$, with a remainder of size $\delta^\frac{1}{2}\varepsilon^3$ ($\delta^\frac{1}{2}\varepsilon^2$ in case of secular growths).\\
However, in the present paper we consider interval of times $[0, t]$ with $t=O(1)$, then we rather rely on the first version of the stability estimate (\ref{inequality:stability}), which provides a smaller remainder for $\delta < \varepsilon^2$.

\end{Rmk}

\section*{Appendix}
We  present a general approach to proving the global existence of weak solutions to system (\ref{original_system})-(\ref{BC}) in $\mathbb{R}^2_+=\mathbb{R} \times \mathbb{R}^+$. We adapt the result of existence of global weak (Leray) solutions  for the incompressible Navier-Stokes equations in \cite{CDGG}, in the case of a general domain of $\mathbb{R}^d, \, d=2,3$. Since there are only minor modifications, we shortly sketch the argument and we refer to the book \cite{CDGG} for a complete proof.\\
We point out that another more explicit approach to prove the global existence existence of (local energy) weak solutions to the 3D incompressible Navier-Stokes equations has recently been developed in \cite{Prange}.\\
We start by recalling the definition of weak solutions to the Cauchy problem associated with system (\ref{original_system})-(\ref{BC}). Some of the notations introduced below have been defined in Section \ref{subsec:stability}.
$$
\ba
&\K:=\left \{ \cW=(u,w,b)\in L^2(\R_+^2)^3,\; \d_x u + \d_y w=0\right\}; \\
&
 \V:=\{ \cW=(u, w, b), \; (u, w) \in H_0^1(\R^2_+)^2, \; b \in H^1(\R^2_+), 	\, \nabla b \cdot n|_{y=0}=0 \};  \\
&\V_\sigma:=\left\{ \cW=(u, w, b) \in \V,  \; \partial_x u + \partial_y w=0 \right\}; \\
&\V'=\text{dual space of } \V; \\
&\V_\sigma'=\text{dual space of } \V_\sigma. \\
& \V_\sigma^\circ=\left\{ \cW=(u, w, b) \in \V', \;  \langle\cW, \cW'\rangle_{\V_\sigma \times \V'}=0 \text{  for any } \cW' \in \V_\sigma \right\}.
\ea
$$
\begin{defi}
A weak solution to the Boussinesq system (\ref{original_system})-(\ref{BC}) on $\R^+ \times \R^2_+$ with initial data $\cW_0 \in \K$ is
$$\cW \in C(\R^+; \V_\sigma') \cap L^\infty_{loc}(\R^+; \K) \cap L^2_{loc}(\R^+; \V_\sigma)$$
such that, for any function $\phi \in C^1(\R^+; \V_\sigma)$,
$$
\ba
\int_{\R^2_+} \cW \cdot \phi  \; dx \, dy & + \int_0^T  \int_{\R^2_+} \mathcal{L} \cW \cdot \phi \; dx \, dy\, dt
 + \varepsilon^6  \int_0^T \int_{\R^2_+}\begin{pmatrix}
 \nu_0 \nabla u\\ \nu_0 \nabla w \\ \kappa_0 \nabla b
\end{pmatrix}  : \nabla \phi \; dx \, dy \, dt \\
& - \int_0^T \int_{\R^2_+} (u, w) \otimes \cW : \nabla \phi \; dx \, dy \, dt -  \int_0^T \int_{\R^2_+} \cW \cdot  \partial_t \phi \; dx \, dy \, dt
\\
& = \int_{\R^2_+} \cW_0 \cdot \phi(t=0) \; dx \, dy.
\ea
$$
Moreover, it satisfies the energy inequality in (\ref{energy-inequality}).
\end{defi}
We now provide a sketch of the proof of Proposition \ref{prop:weak-solutions}. \\
There are essentially two main difficulties in dealing with the half plane $\R^2_+$.\\
$\bullet$ The domain is unbounded: this prevents us from using some compactness results for bounded domains;\\
$\bullet$ The construction of an approximation for the Leray projection (in other words, the construction of an approximation with divergence free velocity field at every step) requires some work to do. In the case of bounded domains, an Hilbertian basis of $\K$ is indeed provided by the spectral theorem for self-adjoint compact operators in Hilbert spaces, since the inverse of the Stokes operator is compact, see \cite{CDGG}. In the whole space, the Leray projector has an explicit expression in terms of the symbol of a pseudodifferential operator homogeneous of degree 0, see \cite{Bertozzi}. In the case of the half space, the embedding of $H_0^1 \subset L^2$ is only continuous and we cannot use the Fourier transform in the vertical direction, then a more implicit approach is needed.\\
The strategy presented in [Chapther 2, \cite{CDGG}] consists in the following main steps. 
\begin{enumerate}
\item Construct a family of orthogonal projectors $\mathbb{P}_\eta, \; \eta \in \R,$ on $\K$. This is done as in [\cite{CDGG}, Chapter 2], by proving that the inverse of the Stokes operator 
$$B: \; \K \rightarrow \V_\sigma \subset \K, \quad B \mathcal{F}=\cW, \quad \cW - \begin{pmatrix}
\nu_0 \varepsilon^6 \Delta u\\
\nu_0 \varepsilon^6 \Delta w\\
\kappa_0 \varepsilon^6 \Delta b\\
\end{pmatrix}-\mathcal{F} \in \V_\sigma^\circ$$
is continuous, self-adjoint, one-to-one, contractive and with range $R(B)$ dense in $\mathbb{K}$. This allows us to define the inverse of $B$ as an unbounded operator $A$ with a dense domain of definition. Since $A$ is self-adjoint, the Stokes operator $A-Id$ is also self-adjoint and then one can apply the spectral theorem to get the following result.
\begin{Prop}
There exists a family of orthogonal projectors on $\mathbb{K}$, denoted by $\mathbb{P}_{\eta}$, with $\eta \in \mathbb{R}$, which commutes with the Stokes operator $A-Id$ and satisfies the following properties.
\begin{itemize}
\item $\mathbb{P}_\eta \mathbb{P}_{\eta'} = \mathbb{P}_{\inf(\eta, \eta')}$ for any $(\eta, \eta') \in \mathbb{R}^2$; 
\item $\mathbb{P}_\eta=0$ for $\eta < 0$ and for any $\cW \in \mathbb{K}$
$$\lim_{\eta \rightarrow \infty} \|\mathbb{P}_\eta \cW - \cW\|_\mathbb{K}=0.$$
\item The family $\mathbb{P}_\eta$ satisfies the right continuity, then for every $\cW \in \mathbb{K}$
$$\lim_{\eta' \rightarrow \eta, \, \eta' > \eta} \|\mathbb{P}_{\eta'} \cW - \mathbb{P}_\eta \cW \|_\mathbb{K}=0.$$
\item For any $\cW \in \mathbb{K}$, the function $\eta \rightarrow (\mathbb{P}_\eta \cW, \cW)_\mathbb{K}=\|\mathbb{P}_\eta \cW\|_\mathbb{K}^2$ is increasing, and
\begin{align*}
&\|\cW\|_{L^2(\mathbb{R}^2_+)}^2=\int_\mathbb{R} d(\mathbb{P}_\eta \cW, \cW)_\mathbb{K}; \\
&\|\nabla \cW\|_{L^2(\mathbb{R}^2_+)}^2=((A-Id)\cW, \cW)_{\mathbb{K}}=\int_\mathbb{R} \eta d(\mathbb{P}_\eta \cW, \cW)_\mathbb{K}.
\end{align*}
\item $\mathbb{P}_\eta \cW \in \cV_\sigma$ for any $\cW \in \mathbb{K}$ and $$\|\nabla \mathbb{P}_\eta \cW\|_{L^2(\mathbb{R}^2_+)} \le \eta^\frac{1}{2} \|\cW\|_{\cV_\sigma}.$$
\end{itemize}
\end{Prop}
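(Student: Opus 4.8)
The plan is to realize the family $\mathbb{P}_\eta$ as the spectral resolution of the identity attached to the nonnegative self-adjoint operator $A-\mathrm{Id}$ on $\K$, so that the statement becomes a transcription of the spectral theorem once $A$ has been properly constructed. Since in the half-space $\R^2_+$ we cannot use the Fourier transform in $y$, nor the compactness of the Stokes resolvent available on bounded domains, the one substantial point is the construction of the bounded inverse $B$ of $A$ by a Lax--Milgram argument, exactly as in \cite{CDGG}.

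First I would set up $B$. On $\cV_\sigma$ consider the symmetric bilinear form
\[
a(\cW,\phi):=(\cW,\phi)_{L^2(\R^2_+)}+\eps^6\int_{\R^2_+}\bigl(\nu_0\,\nabla u:\nabla\phi_u+\nu_0\,\nabla w:\nabla\phi_w+\kappa_0\,\nabla b:\nabla\phi_b\bigr)\,dx\,dy ,
\]
which is continuous and, because of the zeroth-order term, coercive for the $H^1$-norm on $\cV_\sigma$ (no Poincar\'e inequality is needed, precisely because this is the Stokes operator shifted into the resolvent set). By Lax--Milgram, for every $\cF\in\K$ there is a unique $\cW=B\cF\in\cV_\sigma$ with $a(B\cF,\phi)=(\cF,\phi)_{L^2}$ for all $\phi\in\cV_\sigma$, and by density of $\cV_\sigma$ in $\K$ this is the weak form of $\cW-\eps^6(\nu_0\Delta u,\nu_0\Delta w,\kappa_0\Delta b)-\cF\in\cV_\sigma^\circ$. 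Testing against $\phi=B\cF$ gives $\|B\cF\|_\K^2\le a(B\cF,B\cF)=(\cF,B\cF)_\K\le\|\cF\|_\K\|B\cF\|_\K$, hence $B$ is a contraction; symmetry of $a$ makes $B$ self-adjoint and nonnegative; $B\cF=0$ forces $(\cF,\phi)_{L^2}=0$ on $\cV_\sigma$, hence $\cF=0$, so $B$ is injective; and if $\cG\perp R(B)$ then $(B\cG,\cF)_\K=(\cG,B\cF)_\K=0$ for all $\cF$, whence $B\cG=0$ and $\cG=0$, i.e. $R(B)$ is dense in $\K$.

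Next I would set $A:=B^{-1}$ with domain $D(A)=R(B)\subset\cV_\sigma$: since $B$ is bounded, self-adjoint, injective and has dense range, $A$ is a densely defined self-adjoint operator, and for $\cW=B\cF\in D(A)$ one has $((A-\mathrm{Id})\cW,\cW)_\K=(\cF,B\cF)_\K-\|B\cF\|_\K^2=a(B\cF,B\cF)-\|\cW\|_\K^2\ge 0$, so $H:=A-\mathrm{Id}$ is a nonnegative self-adjoint operator. The spectral theorem then provides a unique right-continuous resolution of the identity $\{\mathbb{P}_\eta\}_{\eta\in\R}$ with $H=\int_\R\eta\,d\mathbb{P}_\eta$; this is the desired family. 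The relations $\mathbb{P}_\eta\mathbb{P}_{\eta'}=\mathbb{P}_{\inf(\eta,\eta')}$, right continuity, the commutation with $A-\mathrm{Id}$, $\mathbb{P}_\eta=0$ for $\eta<0$ (because $\sigma(H)\subset[0,\infty)$), $\mathbb{P}_\eta\to\mathrm{Id}$ strongly as $\eta\to\infty$, and the monotonicity together with $(\mathbb{P}_\eta\cW,\cW)_\K=\|\mathbb{P}_\eta\cW\|_\K^2$ and $\|\cW\|_{L^2}^2=\int_\R d(\mathbb{P}_\eta\cW,\cW)_\K$, are all standard features of the spectral family. For the two remaining identities I would use that the closed form associated with $H$ is $a(\cdot,\cdot)-(\cdot,\cdot)_{L^2}$ with form domain $\cV_\sigma$, so that $D(H^{1/2})=\cV_\sigma$ and $\|H^{1/2}\cW\|_\K^2=\eps^6\int(\nu_0|\nabla u|^2+\nu_0|\nabla w|^2+\kappa_0|\nabla b|^2)$ (this is what is meant by $\|\nabla\cW\|_{L^2}^2$ in the statement, up to the fixed positive weights); hence $((A-\mathrm{Id})\cW,\cW)_\K=\|H^{1/2}\cW\|_\K^2=\int_\R\eta\,d(\mathbb{P}_\eta\cW,\cW)_\K$ for $\cW\in\cV_\sigma$. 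Finally $\mathbb{P}_\eta$ truncates $\sigma(H)$ to $[0,\eta]$, so $\mathbb{P}_\eta\cW\in D(H^k)$ for all $k$, in particular $\mathbb{P}_\eta\cW\in D(H^{1/2})=\cV_\sigma$, and
\[
\|\nabla\mathbb{P}_\eta\cW\|_{L^2}^2=\int_{[0,\eta]}\eta'\,d(\mathbb{P}_{\eta'}\cW,\cW)_\K\le\eta\int_{[0,\eta]}d(\mathbb{P}_{\eta'}\cW,\cW)_\K\le\eta\,\|\cW\|_\K^2\le\eta\,\|\cW\|_{\cV_\sigma}^2 ,
\]
which is the claimed bound.

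The main difficulty lies in the identification $D(H^{1/2})=\cV_\sigma$ used in the last step: one must check that the Dirichlet part of $a$ is closed on $\cV_\sigma$ (i.e. that $\cV_\sigma$ is complete for the $H^1$-norm, with the divergence-free constraint and the mixed boundary conditions $u|_{y=0}=w|_{y=0}=0$, $\nabla b\cdot n|_{y=0}=0$ correctly built in) and that the self-adjoint operator it generates is exactly $B^{-1}$. This is also the precise point where, in contrast with a bounded domain, the non-compactness of $B$ prevents a reduction to a Hilbert basis of eigenvectors and forces the use of the continuous spectral family; everything else follows the classical construction of \cite{CDGG} line by line.
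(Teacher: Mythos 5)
Your proposal is correct and follows essentially the same route as the paper, which (citing \cite{CDGG}) constructs the contractive, self-adjoint, injective inverse $B$ of the shifted Stokes operator on $\cV_\sigma$ via a Lax--Milgram/weak-formulation argument, sets $A=B^{-1}$, and reads off all the listed properties from the spectral theorem applied to the nonnegative self-adjoint operator $A-\mathrm{Id}$. Your additional care with the form-domain identification $D((A-\mathrm{Id})^{1/2})=\cV_\sigma$ and your remark that the gradient identities hold with the fixed weights $\nu_0\eps^6,\kappa_0\eps^6$ are accurate refinements of the paper's sketch rather than deviations from it.
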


\item Show that the family of smoothing operators $\bP_\eta$ can be extended to the dual space $\V'$, see [Corollary 2.2, \cite{CDGG}] and converges to the Leray projector $\bP$ in the sense of the $\K$ norm: for any $\cW \in \K$, $$\lim_{\mu \rightarrow + \infty} \|\bP_\mu \cW - \bP \cW\|_\K=0,$$ as in [Proposition 2.4, \cite{CDGG}].
\item Denoting by $\K_k$ the space $\bP_k \K$ for any integer $k$, define the approximate system of ODEs (Friedrichs approximation)
$$\ba  
\dot{\cW}_k(t)=-\bP_k \cL \cW_k +  \varepsilon^6\bP_k\begin{pmatrix}
\nu_0  \Delta u_k\\
\nu_0  \Delta w_k\\
 \kappa_0 \Delta b_k\\
\end{pmatrix}-\delta \bP_k((u_k \partial_x + w_k \partial_y)\cW_k), \\
\cW_k(0)=\bP_k \cW_0,
\ea$$
where $\cW_k(t) \in \K_k$ for every $k$.
\end{enumerate}
Then the rest of the proof and the uniqueness result follow in the usual way, as for the case of Leray solutions to the 2D incompressible Navier-Stokes equations in bounded domains, see \cite{CDGG} for further details.

\section*{Acknowledgements}

This project has received funding from the European Research Council (ERC) under the European Union's Horizon 2020 research and innovation program Grant agreement No 637653, project BLOC ``Mathematical Study of Boundary Layers in Oceanic Motion''.
This work was  supported by the SingFlows project, grant ANR-18-CE40-0027 of the French National Research Agency (ANR). 
The authors would like to thank Thierry Dauxois for several helpful discussions.

\bibliographystyle{plain}

\end{document}